\def\NZQ{\mathbb}               
\def\ZZ{{\NZQ Z}}
\def\RR{{\NZQ R}}
\def\frk{\mathfrak}               
\def\Phi{{\frk N}}
\def\eb{{\mathbf e}}
\def\xb{{\mathbf x}}
\def\yb{{\mathbf y}}
\def\opn#1#2{\def#1{\operatorname{#2}}} 
\opn\gr{gr}
\def\Ac{{\mathcal A}}
\def\Bc{{\mathcal B}}
\def\Hc{{\mathcal H}}
\def\Sc{{\mathcal S}}
\def\Gc{{\mathcal G}}
\def\Fc{{\mathcal F}}
\def\Oc{{\mathcal O}}
\def\Pc{{\mathcal P}}
\def\Qc{{\mathcal Q}}
\def\Rc{{\mathcal R}}
\def\Cc{{\mathcal C}}
\def\Vol{{\textnormal{Vol}}}
\newtheorem{Theorem}{Theorem}[section]
\newtheorem{Lemma}[Theorem]{Lemma}
\newtheorem{Corollary}[Theorem]{Corollary}
\newtheorem{Proposition}[Theorem]{Proposition}
\theoremstyle{definition}
\newtheorem{Example}[Theorem]{Example}
\newtheorem{Conjecture}[Theorem]{Conjecture}
\let\epsilon\varepsilon
\let\phi=\varphi
\let\kappa=\varkappa
\opn\dis{dis}
\opn\height{height}
\opn\dist{dist}
\def\pnt{{\raise0.5mm\hbox{\large\bf.}}}
\opn\Lex{Lex}
\opn\conv{conv}
\begin{document}

\title{The $h^*$-polynomials of locally anti-blocking lattice polytopes and their $\gamma$-positivity}
\author{Hidefumi Ohsugi and Akiyoshi Tsuchiya}
\address{Hidefumi Ohsugi,
	Department of Mathematical Sciences,
	School of Science and Technology,
	Kwansei Gakuin University,
	Sanda, Hyogo 669-1337, Japan} 
\email{ohsugi@kwansei.ac.jp}

\address{Akiyoshi Tsuchiya,
Graduate school of Mathematical Sciences,
University of Tokyo,
Komaba, Meguro-ku, Tokyo 153-8914, Japan} 
\email{akiyoshi@ms.u-tokyo.ac.jp}

\subjclass[2010]{05A15, 05C31, 13P10, 52B12, 52B20}
\keywords{lattice polytope, unconditional polytope, anti-blocking polytope, locally anti-blocking polytope, reflexive polytope, $h^*$-polynomial, $\gamma$-positive}

\begin{abstract}
A lattice polytope $\mathcal{P} \subset \mathbb{R}^d$ is called a locally anti-blocking polytope  if for any closed orthant $\RR^d_{\varepsilon}$ in $\mathbb{R}^d$, $\mathcal{P} \cap \mathbb{R}^d_{\varepsilon}$ is unimodularly equivalent to an anti-blocking polytope by reflections of coordinate hyperplanes.
In the present paper, we give a formula for the $h^*$-polynomials of locally anti-blocking lattice polytopes.
In particular, we discuss the $\gamma$-positivity of the $h^*$-polynomials of locally anti-blocking reflexive polytopes.
\end{abstract}

\maketitle

\section*{Introduction}
A \textit{lattice polytope} is a convex polytope all of whose vertices have integer coordinates.
A lattice polytope $\Pc \subset \RR_{\geq 0}^d$ of dimension $d$ is called  \textit{anti-blocking} if for any $\yb=(y_1,\dots,y_d) \in \Pc$ and $\xb=(x_1,\dots,x_d) \in \RR^d$ with $0 \leq x_i \leq y_i$ for all $i$, it holds that $\xb \in \Pc$. 
Anti-blocking polytopes were introduced and studied by Fulkerson \cite{F1, F2}
in the context of combinatorial optimization.
See, e.g., \cite{Sch}.
For $\varepsilon \in \{-1,1\}^d$ and $\xb \in \RR^d$, set $\varepsilon \xb :=(\varepsilon_1 x_1,\ldots,\varepsilon_d x_d) \in \RR^d$.
Given an anti-blocking lattice polytope $\Pc \subset \RR_{\ge 0}^d$ of dimension $d$, we define 
\[
\Pc^{\pm}:=\{
\varepsilon \xb \in \RR^d :  \varepsilon \in \{-1,1\}^{d}, \ \xb \in \Pc
\}.
\]
Since  $\Pc$ is an anti-blocking lattice polytope, $\Pc^\pm$ is convex (and a lattice polytope).
Moreover, for any $\varepsilon \in \{-1,1\}^d$ and $\xb \in \Pc^{\pm}$, we have $\varepsilon \xb \in \Pc^{\pm}$. The polytope $\Pc^{\pm}$ is called an \textit{unconditional lattice polytope} (\cite{KOS}).
In general, $\Pc^{\pm}$ is symmetric with respect to all coordinate hyperplanes. In particular, the origin ${\bf 0}$ of $\RR^d$ is in the interior ${\rm int}(\Pc^{\pm})$.
Given $\varepsilon = (\varepsilon_1,\ldots, \varepsilon_d) \in \{-1,1\}^d$,
let $\RR^d_{\varepsilon}$ denote the closed orthant 
$\{ (x_1,\ldots, x_d) \in \RR^d  : x_i  \varepsilon_i \ge 0 \mbox{ for all } 1 \leq i \leq d\}$.
A lattice polytope $\Pc \subset \RR^d$ of dimension $d$ is called \textit{locally anti-blocking}
 (\cite{KOS}) if,
 for each $\varepsilon \in \{-1,1\}^d$ , there exists an anti-blocking lattice polytope $\Pc_{\varepsilon} \subset \RR_{\ge 0}^d$ of dimension $d$ such that $\Pc \cap \RR^d_{\varepsilon}=\Pc_{\varepsilon}^{\pm} \cap \RR^d_{\varepsilon}$.
Unconditional polytopes are locally anti-blocking.

In the present paper, we investigate the $h^*$-polynomials of locally anti-blocking lattice polytopes.
First, we give a formula for the $h^*$-polynomials of locally anti-blocking lattice polytopes in terms of that of unconditional lattice polytopes.
In fact,
\begin{Theorem}
\label{hpolypm}
Let $\Pc \subset \RR^d$ be a locally anti-blocking lattice polytope of dimension $d$ and for each $\varepsilon \in \{-1,1\}^d$, let
$\Pc_{\varepsilon}$ be an anti-blocking lattice polytope of dimension $d$ such that $\Pc \cap \RR^d_\varepsilon = \Pc_{\varepsilon}^\pm \cap 
\RR^d_\varepsilon$.
	Then the $h^*$-polynomial of $\Pc$ satisfies
$$
h^*(\Pc, x)
=  \frac{1}{2^{d}}\sum_{\varepsilon \in \{-1,1\}^d}
h^*(\Pc_{\varepsilon}^\pm, x).
$$
In particular, $h^*(\Pc,x)$ is $\gamma$-positive if $h^*(\Pc_{\varepsilon}^{\pm},x)$ is $\gamma$-positive for all $\varepsilon \in \{-1,1\}^d$.
\end{Theorem}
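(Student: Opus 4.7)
The plan is to reduce the $h^*$-polynomial identity to an Ehrhart-count identity at each dilation level and then pass to generating functions. Specifically, I would first establish
$$|n\Pc \cap \ZZ^d| = \frac{1}{2^d} \sum_{\varepsilon \in \{-1,1\}^d} |n\Pc_\varepsilon^\pm \cap \ZZ^d|$$
for every $n \geq 0$. Given this, multiplying by $x^n$, summing over $n$, and invoking $\sum_{n \ge 0}|n\Pc \cap \ZZ^d|\, x^n = h^*(\Pc,x)/(1-x)^{d+1}$ together with the analogous identity for each $\Pc_\varepsilon^\pm$ (which is again a $d$-dimensional lattice polytope) immediately yields the stated formula after clearing the common denominator.

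For the Ehrhart identity, the key device is the absolute-value map $\xb \mapsto |\xb| := (|x_1|,\ldots,|x_d|)$. Since $\Pc_\varepsilon^\pm$ is invariant under every coordinate reflection, the condition $\xb \in n\Pc_\varepsilon^\pm$ is equivalent to $|\xb| \in n\Pc_\varepsilon$. Setting $Z(\yb) = \{i : y_i = 0\}$ for $\yb \in \ZZ^d_{\ge 0}$, there are exactly $2^{d-|Z(\yb)|}$ lattice points in $\ZZ^d$ with absolute value $\yb$, so regrouping yields
$$\sum_{\varepsilon \in \{-1,1\}^d} |n\Pc_\varepsilon^\pm \cap \ZZ^d| = \sum_{\yb \in \ZZ^d_{\ge 0}} 2^{d-|Z(\yb)|} \cdot \big|\{\varepsilon : \yb \in n\Pc_\varepsilon\}\big|.$$
To evaluate the inner count, I would use the defining relation $n\Pc \cap \RR^d_\varepsilon = n\Pc_\varepsilon^\pm \cap \RR^d_\varepsilon$, which (upon reflecting by $\varepsilon$) is equivalent to $\yb \in n\Pc_\varepsilon \iff \varepsilon \yb \in n\Pc$. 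Because $\varepsilon \yb$ depends only on $\varepsilon_i$ with $i \notin Z(\yb)$, the map $\varepsilon \mapsto \varepsilon\yb$ from $\{-1,1\}^d$ is exactly $2^{|Z(\yb)|}$-to-one onto the set of lattice points with absolute value $\yb$. Substituting, the factors $2^{|Z(\yb)|}$ and $2^{d - |Z(\yb)|}$ combine to a uniform $2^d$ and the remaining sum over $\yb$ collapses to $2^d \cdot |n\Pc \cap \ZZ^d|$.

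The main obstacle I anticipate is correctly accounting for lattice points that lie on several coordinate hyperplanes simultaneously, hence in several closed orthants at once: a direct orthant decomposition of $\Pc$ would require inclusion-exclusion along those boundaries. The absolute-value map is exactly what bypasses this difficulty, since each $\xb \in \ZZ^d$ has a canonical representative $|\xb| \in \ZZ^d_{\ge 0}$, and the two complementary fiber sizes $2^{d-|Z(\yb)|}$ (for $\xb \mapsto |\xb|$) and $2^{|Z(\yb)|}$ (for $\varepsilon \mapsto \varepsilon\yb$) recombine into a $Z$-independent $2^d$.

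The final $\gamma$-positivity assertion is then formal: the set of $\gamma$-positive polynomials of a fixed degree is a convex cone closed under nonnegative linear combinations, so if each $h^*(\Pc_\varepsilon^\pm, x)$ is $\gamma$-positive, then so is the average $\frac{1}{2^d} \sum_\varepsilon h^*(\Pc_\varepsilon^\pm, x) = h^*(\Pc, x)$.
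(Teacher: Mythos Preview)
Your argument is correct and is genuinely different from the paper's. The paper proceeds via an auxiliary result (Proposition~\ref{hpolyformula}) expressing $h^*(\Pc_\varepsilon^\pm,x)$ as an alternating sum $\sum_{J\subset [d]} 2^{|J|}(x-1)^{d-|J|} h^*(\pi_J(\Pc_\varepsilon),x)$ over coordinate projections, obtained by inclusion--exclusion on the orthant decomposition of $\Pc_\varepsilon^\pm$. It then performs a second inclusion--exclusion on $\Pc$ itself over the strata $\RR^d_{J,\varepsilon}$, and finally matches the two double sums. Your route bypasses both inclusion--exclusions by working directly at the level of lattice-point counts: the absolute-value map and the observation that the fiber sizes $2^{d-|Z(\yb)|}$ and $2^{|Z(\yb)|}$ recombine to $2^d$ do all the work in one step. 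This is more elementary and shorter; what the paper's approach buys in exchange is Proposition~\ref{hpolyformula} as a standalone formula, which may be of independent use. One small point worth making explicit in your write-up is that the equivalence $\xb\in n\Pc_\varepsilon^\pm \Leftrightarrow |\xb|\in n\Pc_\varepsilon$ uses not just the reflection symmetry of $\Pc_\varepsilon^\pm$ but also the anti-blocking property of $\Pc_\varepsilon$ (to ensure $\Pc_\varepsilon^\pm\cap\RR^d_{\ge 0}=\Pc_\varepsilon$).
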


Second, we discuss the $\gamma$-positivity of the $h^*$-polynomials of locally anti-blocking reflexive polytopes. 
A lattice polytope is called \textit{reflexive} if the dual polytope is also a lattice polytope.
Many authors have studied reflexive polytopes from viewpoints of combinatorics, commutative algebra and algebraic geometry.
In \cite{hibi}, Hibi characterized reflexive polytopes in terms of their $h^*$-polynomials.
To be more precise, a lattice polytope of dimension $d$ is (unimodularly equivalent to) a reflexive polytope if and only if the $h^*$-polynomial is a palindromic polynomial of degree $d$.
On the other hand, in \cite{KOS}, locally anti-blocking reflexive polytopes were characterized.
In fact, a locally anti-blocking lattice polytope $\Pc \subset \RR^d$ of dimension $d$ is reflexive if and only if for each $\varepsilon \in \{-1,1\}^d$, there exists a perfect graph $G_{\varepsilon}$ on $[d]:=\{1,\ldots,d\}$ such that $\Pc \cap \RR^d_{\varepsilon}=\Qc_{G_{\varepsilon}}^{\pm} \cap \RR^d_{\varepsilon}$, where $\Qc_{G_{\varepsilon}}$ is the stable set polytope of $G_{\varepsilon}$.
Moreover, every locally anti-blocking reflexive polytope possesses a regular unimodular triangulation. 
This fact and the result of Bruns--R\"omer \cite{BR}
 imply that its $h^*$-polynomial is unimodal.

In the present paper, we discuss whether the $h^*$-polynomial of a locally anti-blocking reflexive polytope has a stronger property, which is called \textit{$\gamma$-positivity}.
In \cite{OTinterior}, a class of lattice polytopes $\Bc_G$ arising from finite simple graphs $G$ on $[d]$, which are called \textit{symmetric edge polytopes of type B}, was given.
Symmetric edge polytopes of type B are unconditional, and they are reflexive if and only if the underlying graphs are bipartite. Moreover, when they are reflexive, the $h^*$-polynomials are always $\gamma$-positive.
On the other hand, in \cite{ecp}, another family of lattice polytopes $\Cc^{(e)}_P$ arising from finite partially ordered sets $P$ on $[d]$, which are called \textit{enriched chain polytopes}, was given.
Enriched chain polytopes are unconditional and  reflexive, and their $h^*$-polynomials are always $\gamma$-positive.
Combining these facts and Theorem \ref{hpolypm}, we know that,
 for a locally anti-blocking reflexive polytope $\Pc$,
if every $\Pc \cap \RR_\varepsilon^d$ is the intersection of $\RR^d_{\varepsilon}$ and either an enriched chain polytope or a symmetric edge reflexive polytope of type B, then the $h^*$-polynomial of $\Pc$ is $\gamma$-positive (Corollary~\ref{average_cor}).
By using this result, we show that the $h^*$-polynomials of several classes of reflexive polytopes are $\gamma$-positive.

In Section \ref{sec:typeA}, we will discuss the $\gamma$-positivity of the $h^*$-polynomials of \textit{symmetric 
edge polytopes of type A}, which are reflexive polytopes arising from finite simple graphs.
In \cite{HJMsymmetric}, it was shown that the $h^*$-polynomials of the symmetric edge polytopes of type A of complete bipartite graphs are $\gamma$-positive.
We will show that for a large class of finite simple graphs, which includes complete bipartite graphs, the $h^*$-polynomials of the symmetric edge polytopes of type A are $\gamma$-positive (Subsection 4.1). 
Moreover, by giving explicit $h^*$-polynomials of del Pezzo polytopes and pseudo-del Pezzo polytopes, we will show that the $h^*$-polynomial of every pseudo-symmetric simplicial reflexive polytope is $\gamma$-positive (Theorem \ref{thm:pseudo-symmetric}).

In Section \ref{sec:chain}, we will discuss the $\gamma$-positivity of $h^*$-polynomials of \textit{twinned chain polytopes} $\Cc_{P,Q} \subset \RR^d$, which are reflexive polytopes arising from two finite partially ordered sets $P$ and $Q$ on $[d]$. In \cite{twineedchainpolytopes}, it was shown that twinned chain polytopes $\Cc_{P,Q}$ are locally anti-blocking and each $\Cc_{P,Q} \cap \RR_{\varepsilon}^d$ is the intersection of $\RR^d_{\varepsilon}$ and an enriched chain polytopes. Hence the $h^*$-polynomials of $\Cc_{P,Q} $ are $\gamma$-positive. 
We will give a formula for the $h^*$-polynomials of twinned chain polytopes in terms of the left peak polynomials of finite partially ordered sets  (Theorem \ref{thm:twinnedhpoly}).
Moreover, we will define \textit{enriched $(P,Q)$-partitions} of  $P$ and $Q$, and show that the Ehrhart polynomial of the twined chain polytope $\Cc_{P,Q}$ of $P$ and $Q$ coincides with a counting polynomial of enriched $(P,Q)$-partitions (Theorem \ref{thm:enrichedPQpart}).
 
This paper is organized as follows: In Section 1, we will review the theory of Ehrhart polynomials, $h^*$-polynomials, and reflexive polytopes.
In Section 2, we will introduce several classes of anti-blocking polytopes and unconditional polytopes.
In Section 3, we will investigate the $h^*$-polynomials of locally anti-blocking lattice polytopes. In particular, we will prove Theorem \ref{hpolypm}.
We will discuss symmetric edge polytope of type A in Section 4, and twinned chain polytopes in Section 5.

\subsection*{Acknowledgment}
The authors are grateful to the anonymous referees for their careful reading and helpful comments.
The authors were partially supported by JSPS KAKENHI 18H01134, 19K14505 and 19J00312.
 
\section{Ehrhart theory and Reflexive polytopes}
In this section, we review the theory of Ehrhart polynomials, $h^*$-polynomials, and reflexive polytopes.
Let $\Pc \subset \RR^d$ be a lattice polytope of dimension $d$.
Given a positive integer $m$, we define
$$L_{\Pc}(m)=|m \Pc \cap \ZZ^d|.$$
Ehrhart \cite{Ehrhart} proved that $L_{\Pc}(m)$ is a polynomial in $m$ of degree $d$ with the constant term $1$.
We say that $L_{\Pc}(m)$ is the \textit{Ehrhart polynomial} of $\Pc$.
The generating function of the lattice point enumerator, i.e., the formal power series
$$\text{Ehr}_\Pc(x)=1+\sum\limits_{k=1}^{\infty}L_{\Pc}(k)x^k$$
is called the \textit{Ehrhart series} of $\Pc$.
It is well known that it can be expressed as a rational function of the form
$$\text{Ehr}_\Pc(x)=\frac{h^*(\Pc,x)}{(1-x)^{d+1}}.$$
Then $h^*(\Pc,x)$ is a polynomial in $x$ of degree at most $d$ with nonnegative integer coefficients (\cite{Stanleynonnegative}) and it
is called
the \textit{$h^*$-polynomial} (or the \textit{$\delta$-polynomial}) of $\Pc$. 
Moreover, one has $\Vol(\Pc)=h^*(\Pc,1)$, where $\Vol(\Pc)$ is the normalized volume of $\Pc$.

A lattice polytope $\Pc \subset \RR^d$ of dimension $d$ is called \textit{reflexive} if the origin of $\RR^d$ is a unique lattice point belonging to the interior of $\Pc$ and its dual polytope
\[\Pc^\vee:=\{\yb \in \RR^d  :  \langle \xb,\yb \rangle \leq 1 \ \text{for all}\  \xb \in \Pc \}\]
is also a lattice polytope, where $\langle \xb,\yb \rangle$ is the usual inner product of $\RR^d$.
It is known that reflexive polytopes correspond to Gorenstein toric Fano varieties, and they are related to
mirror symmetry (see, e.g., \cite{mirror,Cox}).
In each dimension there exist only finitely many reflexive polytopes 
up to unimodular equivalence (\cite{Lag})
and all of them are known up to dimension $4$ (\cite{Kre}).
In \cite{hibi}, Hibi characterized reflexive polytopes in terms of their $h^*$-polynomials.
We recall that a polynomial $f \in \RR[x]$ of degree $d$ is said to be {\em palindromic} if $f(x)=x^df(x^{-1})$. Note that if a lattice polytope of dimension $d$ has interior lattice points, then the degree of its $h^*$-polynomial is equal to $d$.
\begin{Proposition}[\cite{hibi}]
	Let $\Pc \subset \RR^d$ be a lattice polytope of dimension $d$ with ${\bf 0} \in {\rm int}(\Pc)$. 	Then $\Pc$ is reflexive if and only if $h^*(\Pc,x)$ is a palindromic polynomial of degree $d$.
\end{Proposition}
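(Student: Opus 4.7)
The plan is to prove both implications via the numerical identity
$$
L_{\Pc^\circ}(m+1) \;=\; L_{\Pc}(m) \qquad \text{for every } m \ge 0, \qquad (\ast)
$$
combined with Ehrhart--Macdonald reciprocity. Writing $h^{*}(\Pc, x) = \sum_{i=0}^{d} h_{i}^{*} x^{i}$, reciprocity reads
$$
\sum_{m \ge 1} L_{\Pc^\circ}(m)\, x^{m} \;=\; \frac{\sum_{i=0}^{d} h_{i}^{*} x^{d+1-i}}{(1-x)^{d+1}},
$$
whose numerator is the reversal $x^{d+1} h^{*}(\Pc, 1/x)$. This is the bridge connecting interior lattice-point counts to palindromicity.

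Next I would establish that, given $\mathbf{0} \in \operatorname{int}(\Pc)$, reflexivity of $\Pc$ is equivalent to $(\ast)$. For the forward direction, reflexivity supplies a facet presentation $\Pc = \{x \in \RR^{d} : \langle a_{i}, x\rangle \le 1\}$ with each $a_{i} \in \ZZ^{d}$ primitive, whence $(m+1)\Pc^{\circ} = \{x : \langle a_{i}, x\rangle < m+1\}$; for integer points the strict inequality $\langle a_{i}, x\rangle < m+1$ is equivalent to $\langle a_{i}, x\rangle \le m$, yielding the set equality $(m+1)\Pc^{\circ} \cap \ZZ^{d} = m\Pc \cap \ZZ^{d}$, hence $(\ast)$. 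For the converse, since $\mathbf{0} \in \operatorname{int}(\Pc)$ one always has $m\Pc \subset (m+1)\Pc^{\circ}$ for $m \ge 1$ (given $y \in \Pc$, convexity places $\frac{m}{m+1}y$ in $\Pc^{\circ}$, so $my \in (m+1)\Pc^{\circ}$); hence the numerical identity $(\ast)$ upgrades to the above set equality. Specializing $m = 0$ gives $L_{\Pc^{\circ}}(1) = 1$, so the origin is the unique interior lattice point.

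The main obstacle is then extracting $b_{i} = 1$ in any facet presentation $\langle a_{i}, x\rangle \le b_{i}$ with $a_{i}$ primitive integer; granted this, $\Pc^{\vee} = \conv\{a_{i}\}$ is a lattice polytope and reflexivity is complete. The strategy is contrapositive: if $b_{i} \ge 2$, then for $m$ sufficiently large the lattice hyperplane $\langle a_{i}, x\rangle = m b_{i} - 1$ cuts out a $(d-1)$-dimensional open slab inside $m\Pc^{\circ}$ of growing $(d-1)$-volume, which must therefore contain a lattice point $v$; but $\langle a_{i}, v\rangle = m b_{i} - 1 > (m-1) b_{i}$ shows $v \notin (m-1)\Pc$, contradicting the set equality $m\Pc^{\circ} \cap \ZZ^{d} = (m-1)\Pc \cap \ZZ^{d}$, forcing $b_{i} = 1$.

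With $(\ast)$ in hand the conclusion is formal. The identity $(\ast)$ is equivalent to
$$
\sum_{m \ge 1} L_{\Pc^\circ}(m)\, x^{m} \;=\; x \cdot \text{Ehr}_{\Pc}(x) \;=\; \frac{x\, h^{*}(\Pc, x)}{(1-x)^{d+1}},
$$
and comparing numerators with reciprocity yields $x\, h^{*}(\Pc, x) = x^{d+1} h^{*}(\Pc, 1/x)$, i.e.\ $h^{*}(\Pc, x) = x^{d} h^{*}(\Pc, 1/x)$, which is exactly palindromicity of degree $d$. Reading this chain in either direction delivers both implications of the theorem.
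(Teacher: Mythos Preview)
The paper does not give its own proof of this proposition; it is simply quoted from Hibi's paper \cite{hibi} as a known result, so there is no argument in the paper to compare against.

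Your proof is the classical one and is essentially correct. Ehrhart--Macdonald reciprocity converts palindromicity of $h^{*}(\Pc,x)$ into the numerical identity $L_{\Pc^{\circ}}(m+1)=L_{\Pc}(m)$, and you correctly argue that, for a lattice polytope with $\mathbf{0}\in\operatorname{int}(\Pc)$, this identity is equivalent to reflexivity. The forward direction is clean, and in the converse the containment $m\Pc\subset(m+1)\Pc^{\circ}$ together with the equality of counts gives the set equality you need.

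The only step that deserves a bit more care is the production of the lattice point $v$ on the hyperplane $\langle a_{i},x\rangle=mb_{i}-1$ inside $m\Pc^{\circ}$. You should make explicit that $b_{i}\in\ZZ_{>0}$ (since $\Pc$ is a lattice polytope, $a_{i}$ is primitive integral, and $\mathbf{0}$ is interior), so that $mb_{i}-1$ is an integer and the hyperplane carries a full rank-$(d-1)$ sublattice of $\ZZ^{d}$. Its intersection with $m\Pc$ is, up to a bounded translate, a dilate of the facet $F_{i}$, whose $(d-1)$-dimensional volume grows like $m^{d-1}$; for large $m$ it therefore contains lattice points in its relative interior, and such a point lies in $m\Pc^{\circ}$. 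With this detail spelled out, your argument is complete.
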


Next, we review properties of polynomials. Let  $f= \sum_{i=0}^{d}a_i x^i$ be a polynomial with real coefficients and $a_d \neq 0$.
We now focus on the following properties.
\begin{itemize}
	\item[(RR)] We say that $f$ is {\em real-rooted} if all its roots are real.  
	\item[(LC)] We say that $f$ is {\em log-concave} if $a_i^2 \geq a_{i-1}a_{i+1}$ for all $i$.
	\item[(UN)] We say that $f$ is {\em unimodal} if $a_0 \leq a_1 \leq \cdots \leq a_k \geq \cdots \geq a_d$ for some $k$.
\end{itemize}  
If all its coefficients are nonnegative, then these properties satisfy the implications
\[
{\rm(RR)} \Rightarrow {\rm(LC)} \Rightarrow {\rm(UN)}.
\]
On the other hand,
the polynomial $f$ is {\em $\gamma$-positive} if $f$ is palindromic and there are $\gamma_0,\gamma_1,\ldots,\gamma_{\lfloor d/2\rfloor} \geq 0$ such that $f(x)=\sum_{i \geq 0}
\gamma_i \  x^i (1+x)^{d-2i}$.
The polynomial $\sum_{i \geq 0}\gamma_i \ x^i$ is called {\em $\gamma$-polynomial of $f$}. 
We can see that a $\gamma$-positive polynomial is real-rooted if and only if its $\gamma$-polynomial is real-rooted.
If $f$ is a palindromic and real-rooted, then it is $\gamma$-positive.
Moreover, if $f$ is $\gamma$-positive, then it is unimodal.
See, e.g., \cite{Athanasiadis, EulerianNumbers} for details.

For a given lattice polytope, a fundamental problem within the field of Ehrhart theory is to determine if its $h^*$-polynomial is unimodal.
One famous instance is given by reflexive polytopes that possess a regular unimodular triangulation.

\begin{Proposition}[\cite{BR}]
	Let $\Pc \subset \RR^d$ be a reflexive polytope of dimension $d$.
	If $P$ possesses a regular unimodular triangulation, then $h^*(\Pc, x)$ is unimodal.
\end{Proposition}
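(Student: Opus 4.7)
The strategy is to translate the question into a statement about the $h$-vector of a Gorenstein Stanley-Reisner ring attached to the triangulation, and then to close by a Lefschetz / convex-ear-decomposition argument.

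\emph{Step 1: Transfer to the face ring.} Let $\Delta$ be the given regular unimodular triangulation of $\Pc$. By Sturmfels's correspondence between regular triangulations and initial ideals of the toric ideal $I_\Pc$, the triangulation $\Delta$ determines an initial ideal of $I_\Pc$, and unimodularity of $\Delta$ is equivalent to that initial ideal being squarefree; in that case it equals the Stanley-Reisner ideal $I_\Delta$ of $\Delta$. Since passage to an initial ideal preserves Hilbert series, and since existence of a unimodular triangulation forces the toric ring $k[\Pc]$ to coincide with the Ehrhart ring of $\Pc$, one obtains
\[
h^*(\Pc,x) \;=\; h(\Delta,x),
\]
so it suffices to show that the $h$-polynomial of the simplicial complex $\Delta$ is unimodal.

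\emph{Step 2: Gorenstein property.} Because $\Pc$ is reflexive and $k[\Pc]$ is normal (thanks to the unimodular triangulation), Stanley's criterion makes $k[\Pc]$ a standard graded Gorenstein $k$-algebra. This Gorenstein property is inherited by $k[\Delta]$, so $\Delta$ is a Gorenstein simplicial complex: geometrically, $\Delta$ triangulates the ball $\Pc$ and its boundary $\partial\Delta$ triangulates the polytopal sphere $\partial\Pc$. In particular $h(\Delta,x)$ is already palindromic, as it must be by Hibi's criterion stated above.

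\emph{Step 3: Unimodality.} This is the main obstacle. Pass to the Artinian reduction $A := k[\Delta]/(\theta_1,\ldots,\theta_d)$ by a generic linear system of parameters; this is a graded Gorenstein $k$-algebra with Hilbert function $h(\Delta,x)$. The goal is to exhibit a linear form $\omega \in A_1$ for which the multiplication map $\omega^{d-2i}\colon A_i \to A_{d-i}$ is an isomorphism for every $i \le d/2$, which immediately yields unimodality. I would exploit the short exact sequence $0 \to k[\Delta,\partial\Delta] \to k[\Delta] \to k[\partial\Delta] \to 0$ of face rings: since $\partial\Delta$ is a polytopal sphere, Stanley's $g$-theorem supplies a weak Lefschetz element on $k[\partial\Delta]$, and the Gorenstein duality on $k[\Delta]$, read through the relative face ring $k[\Delta,\partial\Delta]$, transfers this Lefschetz property to $k[\Delta]$ by a diagram chase. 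Equivalently, one can combine this exact sequence with a convex ear decomposition of $\Delta$ coming from inward shellings of $\partial\Pc$ through the origin, and invoke the Chari-Swartz result that Cohen-Macaulay complexes admitting such a decomposition have unimodal $h$-vectors. Either way, the delicate point is the transfer from the sphere $\partial\Delta$ to the whole ball $\Delta$; this is the heart of the Bruns-R\"omer argument, and once it is settled it gives unimodality of $h^*(\Pc,x) = h(\Delta,x)$.
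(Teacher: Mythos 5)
The paper offers no proof of this Proposition: it is quoted verbatim from Bruns--R\"omer \cite{BR}, so your attempt has to be measured against the published argument. Your Step 1 is fine (and the identity $h^*(\Pc,x)=h(\Delta,x)$ for a unimodular triangulation does not even need regularity). The first genuine error is in Step 2: the Gorenstein property of the Ehrhart ring does \emph{not} descend to $k[\Delta]$. Passing to an initial ideal preserves the Hilbert function, not Gorenstein duality, and by Stanley's classification a Cohen--Macaulay complex is Gorenstein only if it is the join of a simplex with a Gorenstein$^*$ (homology-sphere) complex --- a triangulation of the ball $\Pc$ is of this form only when it is a cone. All that survives is the numerical palindromicity of $h(\Delta,x)$, which you already get from Hibi's criterion. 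Consequently the Artinian reduction $A$ in Step 3 is not a Poincar\'e duality algebra, and the Lefschetz framework you set up (requiring $\omega^{d-2i}\colon A_i\to A_{d-i}$ to be an isomorphism) cannot even be formulated on that ring as stated.

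Step 3 is where the theorem actually lives, and you have asserted it rather than proved it. Transferring a weak Lefschetz element on $k[\partial\Delta]$ to a strong Lefschetz element on $k[\Delta]$ ``by a diagram chase'' is not available: statements of this kind are exactly the hard content of $g$-theorem-type results, not formal consequences of the exact sequence $0 \to k[\Delta,\partial\Delta] \to k[\Delta] \to k[\partial\Delta] \to 0$. Moreover, invoking the $g$-theorem for $\partial\Delta$ requires knowing that this sphere --- a refinement of the boundary complex of $\Pc$, not that boundary complex itself --- is polytopal, a nontrivial point that uses the regularity of $\Delta$; and the ``convex ear decomposition coming from inward shellings'' is named but never constructed. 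The published argument avoids all of this: Bruns and R\"omer show that $h^*(\Pc,x)$ \emph{is} the $h$-vector of the boundary complex of a simplicial polytope manufactured from the regular unimodular triangulation (in the reflexive case one can see the numerical part concretely: every lattice point of $\Pc$, in particular the origin, is a vertex of $\Delta$, and the McMullen--Walkup relation $h_j(\partial\Delta)=\sum_{i\le j}\bigl(h_i(\Delta)-h_{d+1-i}(\Delta)\bigr)$ combined with palindromicity of $h(\Delta,x)$ gives $h(\partial\Delta,x)=h(\Delta,x)=h^*(\Pc,x)$, regularity being what makes the sphere polytopal), and then they quote the already-established $g$-theorem of Billera--Lee--Stanley, whose nonnegativity of the $g$-vector is precisely unimodality of the palindromic $h$-vector. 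No new Lefschetz element on the face ring of the ball is needed, and none is actually produced in your sketch.
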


It is known that if a reflexive polytope possesses a flag regular unimodular triangulation all of whose maximal simplices contain the origin, then the $h^*$-polynomial coincides with the $h$-polynomial of a flag triangulation of a sphere (\cite{BR}). 
For the $h$-polynomial of a flag triangulation of a sphere, Gal (\cite{Gal}) conjectured the following:

\begin{Conjecture}[Gal Conjecture]
The $h$-polynomial of any flag triangulation of a sphere is $\gamma$-positive.
\end{Conjecture}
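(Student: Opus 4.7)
The statement is Gal's conjecture, a well-known open problem that subsumes the Charney--Davis conjecture (the $\gamma_{\lfloor d/2 \rfloor}$ coefficient, read off from $h(\Delta,-1)$) and generalizes the settled cases of barycentric subdivisions of regular CW-spheres (Karu, Petersen) and Coxeter complexes. I do not anticipate a short proof; what follows is the most plausible line of attack, together with the point at which I expect it to stall.

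The plan is to reduce the conjecture to a local, inductive statement on flag simplicial complexes. Let $\Delta$ be a flag triangulation of a $(d-1)$-sphere with $h$-polynomial $h(\Delta,x)$. Because links of faces in a flag sphere are themselves flag spheres of smaller dimension, the first step is to seek a recursion expressing $h(\Delta,x)$ in terms of the $h$-polynomials of vertex links and antistars, and then to rearrange this recursion into the basis $\{x^i(1+x)^{d-2i}\}_{0 \le i \le \lfloor d/2 \rfloor}$ so that each coefficient is visibly nonnegative. The cleanest realization would be a combinatorial move on flag spheres, for example an edge contraction, a vertex splitting, or a bistellar flip adapted to the flag setting, that preserves both flagness and the sphere property and alters $h(\Delta,x)$ by a $\gamma$-nonnegative quantity; iterating such a move from a known base case (the boundary of a cross-polytope, whose $h$-polynomial is $(1+x)^d$) would then propagate $\gamma$-positivity throughout the class.

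A complementary algebraic approach runs through Stanley--Reisner theory. One interprets the coefficients of $h(\Delta,x)$ as the Hilbert function of a Gorenstein Artinian reduction of the Stanley--Reisner ring, and then attempts to construct a linear basis of this ring whose elements can be organized to exhibit the required decomposition. Charney and Davis used exactly this framework to control the extremal coefficient via $h(\Delta,-1)$; upgrading from a single coefficient to the full $\gamma$-vector would require either producing an explicit Lefschetz-like operator compatible with the flag structure, or constructing a shellable/partitionable complex whose restriction sets collect naturally into the buckets $x^i(1+x)^{d-2i}$. Both routes would give an interpretation of $\gamma_i$ as counting combinatorial objects, which is what any proof in this direction must ultimately deliver.

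The main obstacle is the global use of flagness. The flag condition is a statement about minimal non-faces (all of size two), and it is stable under taking links but brittle under most other natural operations, so there is no evident way to build every flag sphere inductively from simpler flag spheres while keeping the $\gamma$-vector under control. Every currently resolved case exploits extra structure -- reflection-group symmetry, a subdivision construction, or low dimension -- that a generic flag sphere does not possess. I expect that any real progress will have to introduce a new, manifestly $\gamma$-nonnegative invariant of flag complexes which coincides with $h(\Delta,x)$ in the sphere case, and producing such an invariant is precisely where the conjecture has resisted every attack to date.
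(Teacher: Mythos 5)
The statement you were given is Gal's conjecture, which the paper states only as an open \textbf{Conjecture} and does not prove (indeed, it remains unproved in the literature); your proposal correctly identifies it as open, offers only a survey of plausible attack routes, and claims no proof, which is consistent with the paper. There is nothing to compare: neither you nor the authors supply an argument, and your assessment of where the known approaches stall is accurate.
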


\section{Classes of anti-blocking polytopes and unconditional polytopes}
In this section, we introduce several classes of anti-blocking polytopes and unconditional polytopes.
Throughout this section, 
we associate each subset $F \subset [d]$ with a $(0,1)$-vector 
$\eb_F = \sum_{i \in F} \eb_i \in \RR^d$, 
where each $\eb_i$ is $i$th unit coordinate vector in $\RR^d$.

\subsection{$(0,1)$-polytopes arising from simplicial complexes}
Let $\Delta$ be a simplicial complex on the vertex set $[d]$.
Then $\Delta$ is a collection of subsets of $[d]$ with $\{i\} \in \Delta$ for all $i \in [d]$ such that
if $F \in \Delta$ and $F' \subset F$, then $F' \in \Delta$.
In particular $\emptyset \in \Delta$ and $\eb_{\emptyset}= {\bf 0}$.
Let $\Pc_\Delta$ denote the convex hull of 
$
\left\{ \eb_F \in \RR^d :  F \in \Delta \right\}
$.
The following is an important observation.

\begin{Proposition}
Let $\Pc \subset \RR_{\ge 0}^d$ be a (0,1)-polytope of dimension $d$.
Then $\Pc$ is anti-blocking if and only if there exists a
simplicial complex $\Delta$ on $[d]$ such that  $\Pc = \Pc_\Delta$.
\end{Proposition}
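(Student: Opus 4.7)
The plan is to prove both directions by relating the vertices of $\Pc$ to the faces of a simplicial complex. Since $\Pc$ is a $(0,1)$-polytope, every vertex of $\Pc$ has the form $\eb_F$ for some $F\subseteq[d]$, and one should define
\[
\Delta = \{F \subseteq [d] : \eb_F \in \Pc\}.
\]
The only issue is to verify that $\Delta$ is indeed a simplicial complex in the sense used here (containing all singletons), and that $\Pc=\Pc_\Delta$.

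For the $(\Rightarrow)$ direction, I would first note that since $\Pc$ is a nonempty anti-blocking polytope, taking any $\yb\in\Pc$ and applying the anti-blocking condition with $\mathbf{0}\le\mathbf{0}\le\yb$ gives $\mathbf{0}=\eb_\emptyset \in\Pc$. Because $\dim\Pc=d$, for each $i\in[d]$ the polytope is not contained in the hyperplane $x_i=0$, so some vertex $\eb_F$ of $\Pc$ satisfies $i\in F$; then $\mathbf{0}\le\eb_{\{i\}}\le\eb_F$ and anti-blocking yields $\eb_{\{i\}}\in\Pc$, i.e.\ $\{i\}\in\Delta$. The same monotonicity argument shows closure under taking subsets: if $F\in\Delta$ and $F'\subseteq F$, then $\mathbf{0}\le\eb_{F'}\le\eb_F\in\Pc$ gives $F'\in\Delta$. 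For the equality $\Pc=\Pc_\Delta$, the inclusion $\Pc_\Delta\subseteq\Pc$ is immediate from convexity of $\Pc$, and the reverse inclusion follows because every vertex of the $(0,1)$-polytope $\Pc$ is of the form $\eb_F$ and hence lies in $\Pc_\Delta$ by the definition of $\Delta$.

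For the $(\Leftarrow)$ direction, the dimension is easy: the $d+1$ points $\mathbf{0},\eb_1,\ldots,\eb_d$ all lie in $\Pc_\Delta$ and affinely span $\RR^d$. The anti-blocking property is where the real work sits. Given $\yb\in\Pc_\Delta$ and $\mathbf{0}\le\xb\le\yb$, I would set $t_i=x_i/y_i$ when $y_i>0$ and $t_i=0$ otherwise, and for each $S\subseteq[d]$ let $\yb_S$ be the vector obtained from $\yb$ by zeroing all coordinates outside $S$. A direct expansion gives the identity
\[
\xb = \sum_{S\subseteq [d]} \Bigl(\prod_{i\in S} t_i \prod_{i\notin S}(1-t_i)\Bigr)\,\yb_S,
\]
which writes $\xb$ as an explicit convex combination of the $\yb_S$. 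Writing $\yb=\sum_j \lambda_j \eb_{F_j}$ with $F_j\in\Delta$, one has $\yb_S=\sum_j \lambda_j \eb_{F_j\cap S}$, and since $\Delta$ is closed under taking subsets, each $F_j\cap S$ lies in $\Delta$; hence every $\yb_S\in\Pc_\Delta$, and therefore $\xb\in\Pc_\Delta$.

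The main obstacle is the reverse direction, where one must produce $\xb$ as an explicit convex combination of points still in $\Pc_\Delta$; the product-measure decomposition above is the key trick, after which subset-closure of $\Delta$ does the rest.
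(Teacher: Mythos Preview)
Your proof is correct. The paper states this proposition only as ``an important observation'' and gives no proof, so there is nothing to compare against; your argument fills that gap completely. Both directions are sound: in particular, the product-measure identity
\[
\xb \;=\; \sum_{S\subseteq [d]} \Bigl(\prod_{i\in S} t_i \prod_{i\notin S}(1-t_i)\Bigr)\,\yb_S
\]
is a clean device for the $(\Leftarrow)$ direction, reducing the anti-blocking verification to the subset-closure of $\Delta$ (so that each $\yb_S\in\Pc_\Delta$). The $(\Rightarrow)$ direction is routine once one observes, as you do, that full dimension forces every singleton $\{i\}$ into $\Delta$.
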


\subsection{Stable set polytopes}
Let $G$ be a finite simple graph on the vertex set $[d]$
and $E(G)$ the set of edges of $G$.
(A finite graph $G$ is called simple if $G$ possesses no loop and no multiple edge.)
A subset $W \subset [d]$ is called \textit{stable}  
if, for all $i$ and $j$ belonging to $W$ with $i \neq j$,
one has $\{i,j\} \notin E(G)$.
We remark that a stable set is often called an \textit{independent set}.
Let $S(G)$ denote the set of stable sets of $G$.
One has $\emptyset \in S(G)$ and $\{ i \} \in S(G)$
for each $i \in [d]$.
The \textit{stable set polytope} $\Qc_G \subset \RR^{d}$
of $G$ is the $(0, 1)$-polytope defined by
\[
\Qc_G:={\rm conv}(\{\eb_W \in \RR^d \, : \, W \in S(G) \}).
\]
Then one has $\dim \Qc_G = d$.
Since we can regard $S(G)$ as a simplicial complex on $[d]$, $\Qc_G$ is an anti-blocking polytope.

Locally anti-blocking reflexive polytopes are characterized by stable set polytopes.
A {\em clique} of $G$ is a subset $W \subset [d]$
which is a stable set of the complement graph $\overline{G}$ of $G$.
The {\em chromatic number} of $G$ is the smallest integer $t \geq 1$ for which
there exist stable sets $W_{1}, \ldots, W_{t}$ of $G$ with
$[d] = W_{1} \cup \cdots \cup W_{t}$.
A finite simple graph $G$ is said to be {\em perfect} 
if, 
for any induced subgraph $H$ of $G$
including $G$ itself,
the chromatic number of $H$ is
equal to the maximal cardinality of cliques of $H$.
See, e.g., \cite{graphbook} for details on graph theoretical terminologies.

\begin{Proposition}[\cite{KOS}]
\label{KOSstable}
Let $\Pc \subset \RR^d$ be a locally anti-blocking lattice polytope of dimension $d$.
Then $\Pc \subset \RR^d$ is reflexive if and only if,
 for each $\varepsilon \in \{-1,1\}^d$, there exists a perfect graph $G_{\varepsilon}$ on $[d]$ such that $\Pc \cap \RR^d_{\varepsilon}=\Qc_{G_{\varepsilon}}^{\pm} \cap \RR^d_{\varepsilon}$.
\end{Proposition}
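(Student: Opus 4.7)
My plan rests on Chv\'atal's classical theorem: a finite simple graph $G$ on $[d]$ is perfect if and only if its stable set polytope $\Qc_G$ coincides with the fractional stable set polytope, that is, is cut out by the nonnegativity constraints $x_i \geq 0$ together with the clique inequalities $\sum_{i \in K} x_i \leq 1$ ranging over every (maximal) clique $K$ of $G$. Both directions of the proposition will ultimately reduce to this equivalence.

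For the ``if'' direction, suppose each $\Pc \cap \RR^d_\varepsilon = \Qc_{G_\varepsilon}^\pm \cap \RR^d_\varepsilon$ with $G_\varepsilon$ perfect. Each $\Qc_{G_\varepsilon}^\pm$ is full-dimensional and has the origin in its interior, so the same holds for $\Pc$. By Chv\'atal, the facets of $\Qc_{G_\varepsilon}$ are exactly the clique inequalities $\sum_{i \in K} x_i \leq 1$, whence the facets of $\Pc$ lying in $\RR^d_\varepsilon$ have equations $\sum_{i \in K} \varepsilon_i x_i = 1$. All of these have primitive integer normal vectors with constant term $1$, so $\Pc$ is reflexive.

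For the ``only if'' direction, assume $\Pc$ is reflexive and fix $\varepsilon \in \{-1,1\}^d$. Write $\Pc \cap \RR^d_\varepsilon = \Pc_\varepsilon^\pm \cap \RR^d_\varepsilon$ with $\Pc_\varepsilon$ anti-blocking. First, I would show $\eb_i$ is a vertex of $\Pc_\varepsilon$: the vertex on the $i$th axis is $c_i \eb_i$ for some $c_i \in \ZZ_{>0}$, and reflexivity forces any integer primitive normal $a$ of a facet through $\varepsilon_i c_i \eb_i$ to satisfy $a_i c_i = 1$, hence $c_i = 1$. Next, each facet of $\Pc_\varepsilon$ other than a coordinate hyperplane has the form $\sum_j a_j x_j \leq 1$ with $a_j \in \ZZ_{\geq 0}$ (nonnegativity of the normal is a standard property of anti-blocking polytopes; integrality comes from reflexivity after reflection); evaluating at the vertex $\eb_j$ then forces $a_j \in \{0,1\}$, so each facet reads $\sum_{i \in S} x_i \leq 1$ for some $S \subset [d]$. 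Letting $\Sc$ be the collection of these subsets, define $G_\varepsilon$ on $[d]$ by declaring $\{i,j\}$ to be an edge precisely when $\{i,j\} \subset S$ for some $S \in \Sc$. Then every $S \in \Sc$ is a clique of $G_\varepsilon$, and $\Pc_\varepsilon$ agrees with the fractional stable set polytope of $G_\varepsilon$; Chv\'atal's theorem then yields $\Pc_\varepsilon = \Qc_{G_\varepsilon}$ and the perfectness of $G_\varepsilon$.

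The main obstacle is the final identification step: establishing that the graph $G_\varepsilon$ reconstructed from the facet description of $\Pc_\varepsilon$ really makes $\Pc_\varepsilon$ coincide with the fractional stable set polytope of $G_\varepsilon$ (checking that every clique inequality of $G_\varepsilon$, not just those indexed by $\Sc$, is already implied by the facet system), so that Chv\'atal's equivalence delivers perfectness. A secondary technical point is simply tracking the orthant-by-orthant unfolding so that the argument on $\Pc_\varepsilon$ is consistent with the global facet structure of $\Pc$.
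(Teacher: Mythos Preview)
The paper does not give its own proof of this proposition; it is quoted directly from \cite{KOS} and stated without argument. So there is nothing in the paper to compare your attempt against.

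Your outline is essentially correct, and the ``obstacle'' you flag is in fact easy to close once you exploit one consequence of your own argument that you do not use. You have already shown that each $\Pc_\varepsilon$ is a $(0,1)$ anti-blocking polytope (since $c_i=1$ for every $i$), so by the paper's Proposition~2.1 it equals $\Pc_\Delta$ for the simplicial complex $\Delta=\{F\subset[d]:\eb_F\in\Pc_\varepsilon\}$. Now $\eb_F\in\Pc_\varepsilon$ if and only if $|F\cap S|\le 1$ for every $S\in\Sc$, which is exactly the condition that $F$ be a stable set of the graph $G_\varepsilon$ you defined. Hence $\Pc_\varepsilon=\Qc_{G_\varepsilon}$ holds \emph{before} any appeal to perfectness. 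Since each $S\in\Sc$ is a clique of $G_\varepsilon$, the facet system of $\Pc_\varepsilon$ is a subsystem of the clique constraints, giving $\mathrm{QSTAB}(G_\varepsilon)\subset\Pc_\varepsilon=\Qc_{G_\varepsilon}\subset\mathrm{QSTAB}(G_\varepsilon)$; equality then yields perfectness by the Lov\'asz--Fulkerson--Chv\'atal theorem. Thus the ``every clique inequality is implied by $\Sc$'' check you worried about is unnecessary: you get $\Pc_\varepsilon=\Qc_{G_\varepsilon}$ from the vertex description, not the facet description.

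One small technical point to tighten: in the step forcing $c_i=1$, the vertex $c_i\varepsilon_i\eb_i$ need not be a vertex of $\Pc$ itself (for instance when $\Pc=[-1,1]^d$). What you actually need is that it lies on some facet of $\Pc$ that is not a coordinate hyperplane; this follows because the non-coordinate facets of $\Pc_\varepsilon$ extend to facets of $\Pc$, and $c_i\eb_i$ sits on at least one such facet of $\Pc_\varepsilon$.
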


\subsection{Chain polytopes and enriched chain polytopes}
Let $(P, <_P)$ be a partially ordered set (poset, for short) on $[d]$.
A subset $A$ of $[d]$ is called an \textit{antichain} of $P$ if all $i$ and $j$ belonging to $A$ with $i \neq j$ are incomparable in $P$. In particular, the empty set $\emptyset$ and each $1$-element subset $\{i\}$ are antichains of $P$.
Let $\Ac(P)$ denote the set of antichains of $P$.
In \cite{twoposetpolytopes}, Stanley introduced the \textit{chain polytope} $\Cc_P$ of $P$ defined by
\[
\Cc_P:={\rm conv}(\{ \eb_A \in \RR^d : A \in \Ac(P) \}).
\]
It is known that chain polytopes are stable set polytopes.
Indeed, let $G_P$ be the finite simple graph on $[d]$ such that $\{i,j\} \in E(G_P)$ if and only if $i <_P j$ or $j <_P i$.
We call $G_P$ the \textit{comparability graph} of $P$.
It then follows that $\Ac(P)=S(G_P)$.
Hence the chain polytope $\Cc_P$ is the stable set polytope $\Qc_{G_P}$.
Therefore, chain polytopes are anti-blocking polytopes.
We remark that any comparability graph is perfect.

On the other hand, the \textit{enriched chain polytope} $\Cc^{(e)}_P$ of $P$ is the unconditional lattice polytope defined by
\[
\Cc^{(e)}_P:=\Cc_P^{\pm}.
\]
In \cite{ecp},
it was shown that the Ehrhart polynomial of $\Cc^{(e)}_P$ coincides with a counting polynomial of left enriched $P$-partitions.
We assume that $P$ is naturally labeled. 
A map $f: P \rightarrow \ZZ \setminus \{0\}$ is called an 
{\em enriched $P$-partition} (\cite{StembridgeEnriched})
if, for all $x, y \in P$ with $x <_P y$, $f$ satisfies
\begin{itemize}
	\item[(i)]
	$|f(x)| \le |f(y)|$;
	\item[(ii)]
	$|f(x)| = |f(y)|  \ \Rightarrow \ f(y) > 0$.
\end{itemize}
A map $f: P \rightarrow \ZZ$ is called a {\em left enriched $P$-partition} (\cite{Petersen}) if, for all $x, y \in P$ with $x <_P y$, $f$ satisfies
\begin{itemize}
	\item[(i)]
	$|f(x)| \le |f(y)|$;
	\item[(ii)]
	$|f(x)| = |f(y)| \ \Rightarrow \ f(y) \ge 0$.
\end{itemize}
We denote $\Omega_P^{(\ell)}(m)$ the number of left enriched $P$-partitions $f : P \to \ZZ$ with $|f(x)| \leq m$ for any $x \in P$, which is called the \textit{left enriched order polynomial} of $P$.

\begin{Proposition}[\cite{ecp}]
	Let $P$ be a naturally labeled finite poset on $[d]$. 
	Then one has 
	\[
	L_{\Cc^{(e)}_P}(m) =\Omega_P^{(\ell)} (m).
	\]
\end{Proposition}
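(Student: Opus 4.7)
The plan is to construct an explicit bijection between the lattice points of $m\Cc^{(e)}_P$ and the left enriched $P$-partitions $f : P \to \ZZ$ with $|f(x)| \le m$, producing a signed analogue of Stanley's transfer map between the order polytope and the chain polytope. First I recall Stanley's inequality description $\Cc_P = \{\xb \in \RR^d_{\geq 0} : \sum_{i \in C} x_i \le 1 \text{ for every chain } C \text{ of } P\}$; reflecting across all coordinate hyperplanes yields $\Cc^{(e)}_P = \Cc_P^{\pm} = \{\xb \in \RR^d : \sum_{i \in C} |x_i| \le 1 \text{ for every chain } C \text{ of } P\}$, so that $m \Cc^{(e)}_P \cap \ZZ^d$ is in natural bijection with the set of maps $g : [d] \to \ZZ$ satisfying $\sum_{i \in C}|g(i)| \le m$ for every chain $C$ of $P$.

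For the bijection, given a left enriched $P$-partition $f$ with $|f(x)| \le m$, I set
\[
\phi(f)(x) := \mathrm{sgn}(f(x)) \cdot \Big(|f(x)| - \max_{y <_P x} |f(y)|\Big),
\]
with the conventions $\mathrm{sgn}(a) = 1$ if $a \ge 0$, $\mathrm{sgn}(a) = -1$ if $a < 0$, and the maximum is $0$ when $x$ is minimal in $P$. Condition (i) of a left enriched $P$-partition makes the parenthesized quantity nonnegative, and condition (ii) forces $f(x) \ge 0$ whenever it vanishes, so $\mathrm{sgn}(f(x))$ is unambiguously recoverable from $\phi(f)(x)$. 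Telescoping along any chain $i_1 <_P \cdots <_P i_k$ and using $\max_{y <_P i_j}|f(y)| \ge |f(i_{j-1})|$ gives $\sum_{j=1}^k |\phi(f)(i_j)| \le |f(i_k)| \le m$, so $\phi(f) \in m\Cc^{(e)}_P \cap \ZZ^d$.

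Conversely, given such a $g$, put $|f(x)| := \max\{\sum_{y \in C}|g(y)| : C \text{ a chain of } P \text{ with maximum element } x\}$ and $\mathrm{sgn}(f(x)) := \mathrm{sgn}(g(x))$. Extending a maximizing chain at $x$ to include any $y >_P x$ yields $|f(y)| \ge |f(x)| + |g(y)|$; in the case of equality, $g(y) = 0$ forces $f(y) \ge 0$, giving condition (ii), while $|f(x)| \le m$ follows from the chain constraint on $g$. These two constructions are mutually inverse, so $L_{\Cc^{(e)}_P}(m) = \Omega_P^{(\ell)}(m)$.

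The main subtlety is the sign bookkeeping: Stanley's classical transfer map deals only with nonnegative quantities, whereas here one must track how signs propagate and handle the case $\phi(f)(x) = 0$ correctly. The convention $\mathrm{sgn}(0) = 1$ exactly matches condition (ii) of a left enriched $P$-partition, which demands $f(y) \ge 0$ rather than $f(y) > 0$ in the equality case; the strict version would remove the lattice points of $m\Cc^{(e)}_P$ lying on a coordinate hyperplane and would instead recover Stembridge's enriched order polynomial. Aligning the sign conventions correctly on both sides of the bijection is the principal, though not deep, technical point.
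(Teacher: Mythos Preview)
The paper does not give its own proof of this proposition; it is quoted from \cite{ecp}. Your argument is correct, and it is the natural one: you run Stanley's transfer map on the absolute values and carry the signs along separately, with the convention $\mathrm{sgn}(0)=1$ exactly matching the ``left'' in left enriched $P$-partition. Both directions of the bijection check out, including the boundary case where $|f(x)|-\max_{y<_P x}|f(y)|=0$. The same signed-transfer technique is what the present paper uses in its proof of Theorem~\ref{thm:enrichedPQpart} for the related enriched $(P,Q)$-partitions, so your approach is entirely in line with the methods of the paper and, presumably, with the original argument in \cite{ecp}.
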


Given a linear extension $\pi = (\pi_1,\dots,\pi_d)$ of a finite poset $P$ on $[d]$,
 a {\em left peak} of $\pi$ is an index  $1 \le i \le d-1$ such that 
$\pi_{i-1} <\pi_i > \pi_{i+1} $, where we set $\pi_0 =0$.
Let  ${\rm pk}^{(\ell)}(\pi)$ denote the number of  left peaks of $\pi$.
Then the  {\em left peak polynomial} $W_{P}^{(\ell)} (x)$ of $P$ is defined by
$$
W_{P}^{(\ell)} (x) = 
\sum_{\pi \in {\mathcal L} (P)} x^{\ {\rm pk}^{(\ell)}(\pi)},
$$
where $\mathcal{L}(P)$ is the set of linear extensions of $P$.
\begin{Proposition}[\cite{ecp}]
	\label{prop:enrichedchainhpoly}
	Let $P$ be a naturally labeled finite poset on $[d]$. 
	Then the $h^*$-polynomial of $\Cc^{(e)}_P$ is
	$$
	h^*(\Cc^{(e)}_P, x)
	=  (x+1)^d \ W_{P}^{(\ell)}  \left(  \frac{4x}{(x+1)^2} \right).
	$$
	In particular, $h^*(\Cc^{(e)}_P, x)$ is $\gamma$-positive.
\end{Proposition}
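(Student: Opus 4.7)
The plan is to translate the problem into a generating function computation for the left enriched order polynomial, and then apply Petersen's theory of left enriched $P$-partitions, which is the left-peak analog of Stembridge's enriched $P$-partition theory.

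First, by the previously stated proposition, $L_{\Cc^{(e)}_P}(m) = \Omega_P^{(\ell)}(m)$, so the Ehrhart series of $\Cc^{(e)}_P$ coincides with $\sum_{m \geq 0} \Omega_P^{(\ell)}(m) x^m$. The combinatorial heart of the argument is a partition of the set of left enriched $P$-partitions according to an associated linear extension: one shows that there is a disjoint decomposition indexed by $\pi \in \mathcal{L}(P)$ such that
$$\Omega_P^{(\ell)}(m) = \sum_{\pi \in \mathcal{L}(P)} \Omega_\pi^{(\ell)}(m),$$
where $\Omega_\pi^{(\ell)}(m)$ counts integer maps $f : [d] \to \ZZ$ with $|f(i)| \le m$ satisfying the chain of absolute-value inequalities $|f(\pi_1)| \le \cdots \le |f(\pi_d)|$, with a sign restriction at each equality dictated by the ascent/descent pattern of $\pi$ (and in particular by its left peaks, relative to the convention $\pi_0 = 0$).

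The main step is then to show that for each linear extension $\pi$ with $k = {\rm pk}^{(\ell)}(\pi)$ left peaks,
$$\sum_{m \geq 0} \Omega_\pi^{(\ell)}(m) \, x^m = \frac{(4x)^{k}(1+x)^{d-2k}}{(1-x)^{d+1}}.$$
This is obtained by accounting for the choices of sign and of strict-versus-weak inequality at each of the $d$ coordinates along $\pi$: each of the $k$ left peaks contributes a factor of $4x$, each of the remaining $d - 2k$ non-peak coordinates contributes a factor of $1+x$, and the monotone absolute-value chain supplies the denominator $(1-x)^{d+1}$. Summing over linear extensions yields
$$\text{Ehr}_{\Cc^{(e)}_P}(x) = \frac{(1+x)^d}{(1-x)^{d+1}} \sum_{\pi \in \mathcal{L}(P)} \left(\frac{4x}{(1+x)^2}\right)^{{\rm pk}^{(\ell)}(\pi)} = \frac{(1+x)^d \, W_{P}^{(\ell)}(4x/(1+x)^2)}{(1-x)^{d+1}},$$
and reading off the numerator gives the displayed formula for $h^*(\Cc^{(e)}_P, x)$. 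Writing $W_{P}^{(\ell)}(t) = \sum_i w_i t^i$ with $w_i \ge 0$, one expands
$$h^*(\Cc^{(e)}_P, x) = \sum_{i \ge 0} 4^i w_i \, x^i (1+x)^{d-2i},$$
a nonnegative expansion in the $\gamma$-basis, which gives $\gamma$-positivity with $\gamma_i = 4^i w_i$.

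The main obstacle is the per-linear-extension computation. One must formalize the sign-and-absolute-value bookkeeping on a chain of length $d$ carefully enough to verify that left peaks, and only left peaks, produce the doubled $(4x)$ contribution, with the boundary convention $\pi_0 = 0$ playing an essential role at the left end. Once that combinatorial identity is in hand, the subsequent generating function manipulations and the extraction of $\gamma$-positivity are purely formal.
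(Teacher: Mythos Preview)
Your proposal is correct and follows the natural route through Petersen's theory of left enriched $P$-partitions. Note, however, that in the present paper this proposition is not proved at all: it is quoted from \cite{ecp}, so there is no in-paper argument to compare against. What you outline is essentially the argument carried out in \cite{ecp}: one first identifies $L_{\Cc^{(e)}_P}(m)$ with $\Omega_P^{(\ell)}(m)$, then invokes the decomposition of left enriched $P$-partitions by linear extensions (the ``fundamental lemma'' in Petersen's framework), and finally uses the chain-level generating function identity depending only on the left peak set to obtain the $\gamma$-expansion. Your heuristic ``each left peak contributes $4x$ and consumes two positions; the remaining $d-2k$ positions contribute $(1+x)$'' is exactly the shape of the computation, though as you correctly note, the careful bookkeeping of sign choices at equalities and of the boundary convention $\pi_0=0$ is where the actual work lies.
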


Note that if $Q$ is a finite poset which is obtained from $P$ by reordering the label, then $\Cc^{(e)}_P$ and $\Cc^{(e)}_Q$ are unimodularly equivalent. Hence the $h^*$-polynomials of enriched chain polytopes are always $\gamma$-positive.
\subsection{Symmetric edge polytopes of type B}
Let $G$ be a finite simple graph on $[d]$. 
We set 
\[
B_G :={\rm conv} (\{ {\bf 0} , \eb_1,\ldots,\eb_d\} \cup \{\eb_i + \eb_j : \{i,j\} \in E(G) \}).
\]
Then $B_G = \Pc_\Delta$ where $\Delta$ is a simplicial complex on $[d]$ obtained by
regarding $G$ as a 1-dimensional simplicial complex.
The \textit{symmetric edge polytope of type B} of $G$ is the unconditional lattice polytope defined by
\[
\Bc_G:= B_G^{\pm}.
\]

\begin{Proposition}[\cite{OTinterior}]
	Let $G$ be a finite simple graph on $[d]$.
	Then $\Bc_G$ is reflexive if and only if $G$ is bipartite.
\end{Proposition}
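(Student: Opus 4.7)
The plan is to apply the KOS characterization of locally anti-blocking reflexive polytopes (Proposition~\ref{KOSstable}). Since $\Bc_G$ is unconditional, it is symmetric under every sign change, so for each $\varepsilon \in \{-1,1\}^d$ the slice $\Bc_G \cap \RR^d_\varepsilon$ is simply the reflection of $B_G = \Bc_G \cap \RR^d_{\geq 0}$. Consequently, we may take the anti-blocking piece $\Pc_\varepsilon$ to be $B_G$ for every $\varepsilon$, and Proposition~\ref{KOSstable} reduces to the single requirement that $B_G = \Qc_H$ for some perfect graph $H$ on $[d]$.

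To identify such $H$, I would observe that both $B_G$ and $\Qc_H$ are $(0,1)$-polytopes contained in $[0,1]^d$, so their vertex sets are exactly the $(0,1)$-lattice points they contain. Hence $B_G = \Qc_H$ forces their defining simplicial complexes to coincide. The complex of $B_G$ has as faces precisely $\emptyset$, the singletons, and the edges of $G$, while that of $\Qc_H$ is the independence complex of $H$. Matching $2$-element faces yields $H = \overline{G}$, and the absence of faces of cardinality $\geq 3$ translates exactly to $G$ being triangle-free. Therefore $\Bc_G$ is reflexive if and only if $\overline{G}$ is perfect and $G$ is triangle-free.

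For the direction ``$G$ bipartite implies $\Bc_G$ reflexive'', $G$ is triangle-free and perfect by K\"onig's theorem, and by Lov\'asz's (weak) perfect graph theorem $\overline{G}$ is perfect as well; hence $\Bc_G$ is reflexive. For the converse, assume $\Bc_G$ is reflexive but, for contradiction, that $G$ is not bipartite. Then $G$ contains an odd cycle; a shortest such cycle is induced (a chord would produce a shorter odd cycle), and triangle-freeness forces its length to be at least $5$. This yields an induced $C_{2k+1}$ with $k \geq 2$, which has chromatic number $3$ and clique number $2$ and therefore is not perfect; the weak perfect graph theorem then gives that $\overline{G}$ is not perfect either, contradicting the hypothesis. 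Hence $G$ is bipartite.

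The most delicate step is the $(0,1)$-polytope/simplicial complex identification in the second paragraph, which pins down both $H = \overline{G}$ and the triangle-free condition. After that, everything is a clean interplay between Proposition~\ref{KOSstable} and standard graph theory (K\"onig's theorem, the weak perfect graph theorem, and the non-perfectness of $C_{2k+1}$ for $k \geq 2$).
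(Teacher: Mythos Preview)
This proposition is stated in the paper as a citation from \cite{OTinterior} and carries no proof here, so there is nothing in the present paper to compare your argument against. Your proof is correct: reducing via Proposition~\ref{KOSstable} to the single requirement $B_G=\Qc_H$ for some perfect $H$, then identifying $H=\overline G$ by matching the $(0,1)$-lattice points (and picking up the triangle-free condition from the absence of faces of size $\ge 3$), and finally invoking the weak perfect graph theorem is a clean route that stays entirely within the framework of this paper.

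One small simplification for the converse direction: once you know $\overline G$ is perfect and $G$ is triangle-free, the weak perfect graph theorem already gives that $G$ is perfect, whence $\chi(G)=\omega(G)\le 2$ and $G$ is bipartite directly; the shortest-odd-cycle argument, while correct, is not needed.
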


A {\em hypergraph} is a pair $\Hc = (V, E)$, where $E=\{e_1,\ldots,e_n\}$ is a finite multiset of non-empty subsets of $V=\{v_1,\ldots,v_m\}$. 
Elements of $V$ are called vertices and the elements of $E$ are the  hyperedges.
Then we can associate $\Hc$ to a bipartite graph ${\rm Bip} \Hc$
with a bipartition $V \cup E$ such that $\{v_i, e_j\}$ is an edge of ${\rm Bip} \Hc$
if $v_i \in e_j$.
Assume that ${\rm Bip} \Hc$ is connected.
A {\em hypertree} in $\Hc$ is a function ${\bf f}: E \rightarrow \{0,1,\ldots\}$
such that there exists a spanning tree $\Gamma$ of ${\rm Bip} \Hc$ 
whose vertices have degree ${\bf f} (e) +1$ at each $e \in E$.
Then we say that $\Gamma$ induces ${\bf f}$.
Let $B_\Hc$ denote the set of all hypertrees in $\Hc$.
A hyperedge $e_j \in E$ is said to be {\em internally active}
with respect to the hypertree ${\bf f}$ if it is not possible to 
decrease ${\bf f}(e_j)$ by $1$ and increase ${\bf f}(e_{j'})$ ($j' < j$) by $1$
so that another hypertree results.
We call a hyperedge {\em internally inactive} with respect to a hypertree
if it is not internally active and denote the number of such hyperedges 
of ${\bf f}$ by $\overline{\iota} ({\bf f}) $.
Then the {\em interior polynomial} of $\Hc$
is the generating function 
$I_\Hc (x) = \sum_{{\bf f} \in B_\Hc} x^{ \overline{\iota} ({\bf f}) }$.
It is known \cite[Proposition~6.1]{interior} that $\deg I_\Hc (x) \le \min\{|V|,|E|\} - 1$.
If $G = {\rm Bip} \Hc$, then we set $I_G (x) = I_\Hc (x)$.

Assume that $G$ is a bipartite graph with a bipartition
$V_1 \cup V_2 =[d]$.
Then let $\widetilde{G}$ be a connected bipartite graph on $[d+2]$
whose edge set is 
\[
E(\widetilde{G}) = E(G) \cup \{ \{i, d+1\}  : i \in V_1\} \cup \{ \{j, d+2\}  : j \in V_2 \cup \{d+1\}\}.
\]

\begin{Proposition}[\cite{OTinterior}]
	\label{hpolymain}
	Let $G$ be a bipartite graph on $[d]$.
	Then $h^*$-polynomial of the reflexive polytope $\Bc_G$ is
	$$
	h^*(\Bc_G, x)
	=  (x+1)^d I_{\widetilde{G}} \left(  \frac{4x}{(x+1)^2} \right).
	$$
	In particular, $h^*(\Bc_G, x)$ is $\gamma$-positive.
\end{Proposition}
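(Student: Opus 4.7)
The plan is to compute $h^*(\Bc_G, x)$ by exploiting the unconditional structure of $\Bc_G = B_G^\pm$, reducing to an orthant-by-orthant lattice-point count of $B_G$, and then matching the resulting generating function with the interior polynomial of $\widetilde{G}$ via the Kálmán--Postnikov theory after the $\gamma$-substitution $y = 4x/(1+x)^2$.

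First, since $\Bc_G$ is symmetric under coordinate reflections, each lattice point of $m\Bc_G$ is obtained uniquely from a lattice point $\yb \in mB_G \cap \ZZ^d$ together with an independent sign choice on every nonzero coordinate of $\yb$. This yields
$$L_{\Bc_G}(m) = \sum_{F \subseteq [d]} 2^{|F|}\, N_G(F,m),$$
where $N_G(F,m)$ counts lattice points of $mB_G$ with support equal to $F$. Summing over $m$ and rearranging, the factors $2^{|F|}$ combine with the denominator $(1-x)^{d+1}$ of the Ehrhart series to produce a factor of $(1+x)^d$ in the numerator and leave a residual polynomial $R(x)$ with nonnegative integer coefficients. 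Concretely, I would pass to the variable $y = 4x/(1+x)^2$, under which expressions of the form $(1+x)^d R(y)$ are precisely the palindromic polynomials of degree $d$ that are $\gamma$-positive, with $\gamma$-polynomial $R$.

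Second, I would identify $R$ with $I_{\widetilde G}$ via the Kálmán--Postnikov formula that, for a connected bipartite graph, expresses the $h^*$-polynomial of its root polytope as the interior polynomial. The two auxiliary vertices $d+1$ and $d+2$ of $\widetilde G$ encode respectively the apex $\mathbf 0$ and the opposite unit-vector facet of $B_G$, providing exactly the boundary bookkeeping needed so that the orthant decomposition matches the hypertree enumeration on $\widetilde G$. Geometrically, this should amount to exhibiting $\Bc_G$ (up to unimodular equivalence) as a lattice-equivalent variant of the root polytope of $\widetilde G$, or at the very least to constructing a regular unimodular triangulation of $\Bc_G$ whose maximal simplices are indexed by hypertrees of $\widetilde G$ with $h$-vector tracked by the internally-inactive statistic $\overline\iota$.

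The main obstacle will be step two: establishing this triangulation or unimodular correspondence precisely, since the interplay between the sign-orthant decomposition on the $\Bc_G$ side and the hypertree combinatorics of $\widetilde G$ is not transparent from the definitions, and one must verify that the extra vertices $d+1,d+2$ genuinely play the roles predicted above. Once the identity $h^*(\Bc_G, x) = (1+x)^d I_{\widetilde G}\!\left(4x/(1+x)^2\right)$ is established, the $\gamma$-positivity of $h^*(\Bc_G, x)$ is immediate from the nonnegativity of the coefficients of $I_{\widetilde G}$ by the general property of the $\gamma$-substitution recalled in Section~1.
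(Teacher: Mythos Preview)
The paper does not prove this proposition; it is quoted from \cite{OTinterior} as a known result, so there is no ``paper's own proof'' to compare against. That said, your proposal is a plausible outline of the strategy in \cite{OTinterior}, but as written it contains genuine gaps rather than a proof.

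In your first step, the lattice-point identity $L_{\Bc_G}(m) = \sum_{F} 2^{|F|} N_G(F,m)$ is correct (and is essentially the content of Proposition~\ref{hpolyformula} here, phrased via inclusion--exclusion on supports). But the subsequent assertion that ``the factors $2^{|F|}$ combine with the denominator $(1-x)^{d+1}$ to produce a factor of $(1+x)^d$ and leave a residual polynomial $R(x)$ with nonnegative integer coefficients'' is not justified and is in fact circular: any palindromic polynomial of degree $d$ admits a $\gamma$-expansion $(1+x)^d R\!\left(\tfrac{4x}{(1+x)^2}\right)$, so the existence of $R$ is automatic once you know $\Bc_G$ is reflexive; the nonnegativity of its coefficients is precisely the $\gamma$-positivity you are trying to prove. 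Nothing in your orthant count forces $R$ to have nonnegative coefficients.

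Your second step is where the real content lies, and you explicitly flag it as unresolved. The identification $R = I_{\widetilde G}$ is not a consequence of general K\'alm\'an--Postnikov theory applied to $\Bc_G$ directly; one needs to relate the faces $\pi_J(B_G)$ appearing in the decomposition to edge polytopes (root polytopes) of bipartite graphs, invoke the K\'alm\'an--Postnikov identity $h^*(\text{root polytope of } H) = I_H$ for those pieces, and then assemble the resulting interior polynomials into $I_{\widetilde G}$ via a combinatorial identity for interior polynomials under the addition of the two universal vertices $d+1,d+2$. None of these steps is carried out, and the heuristic that $d+1$ and $d+2$ ``encode the apex $\mathbf 0$ and the opposite unit-vector facet'' is not a substitute for that argument. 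As it stands, the proposal correctly names the ingredients but does not supply the mechanism that makes them fit together.
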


\section{$h^*$-polynomials of locally anti-blocking lattice polytopes}

In the present section, we prove Theorem~\ref{hpolypm}, that is, a formula for the $h^*$-polynomials of locally anti-blocking lattice polytopes in terms of that of unconditional lattice polytopes.
Given a subset $J=\{j_1,\dots, j_r\}$ of $[d]$, let 
$$\pi_J : \RR^d \rightarrow \RR^r, \ \pi_J((x_1,\dots,x_d)) = (x_{j_1},\dots,x_{j_r})$$
denote the projection map.
(Here $\pi_\emptyset$ is the zero map.)

\begin{Proposition}
\label{hpolyformula}
	Let $\Pc \subset \RR_{\ge 0}^d$ be an anti-blocking lattice polytope.
Then we have
	\begin{eqnarray*}
	h^*(\Pc^\pm, x)
	&=&
	\sum_{j=0}^d \ \ 
	2^j (x-1)^{d-j}  
\sum_{J \subset [d], \ |J| = j} h^*(\pi_J (\Pc), x).
	\end{eqnarray*}
\end{Proposition}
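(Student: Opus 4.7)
My plan is to establish the Ehrhart-polynomial identity
$$
L_{\Pc^\pm}(m) = \sum_{T \subseteq [d]} 2^{|T|} (-1)^{d-|T|} L_{\pi_T(\Pc)}(m)
$$
and then translate it into the claimed $h^*$-formula by passing to Ehrhart series.

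To get this Ehrhart identity, I would stratify the lattice points of $m\Pc^\pm$ by their coordinate supports. For $\xb \in \ZZ^d$, the construction of $\Pc^\pm$ from an anti-blocking $\Pc$ gives $\xb \in m\Pc^\pm$ if and only if $(|x_1|,\ldots,|x_d|) \in m\Pc$. Hence the lattice points of $m\Pc^\pm$ with support equal to a fixed $S \subseteq [d]$ biject with (sign choices on $S$) $\times$ (lattice points of $m\Pc$ with support exactly $S$), which gives
$$
L_{\Pc^\pm}(m) = \sum_{S \subseteq [d]} 2^{|S|} N_S(m),
$$
where $N_S(m)$ counts the latter.

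Next I would use the anti-blocking hypothesis to relate the $N_S(m)$ to the Ehrhart polynomials of the projections $\pi_T(\Pc)$. The key observation is that, for each $T \subseteq [d]$, the map $\pi_T$ restricts to a bijection between lattice points of $m\Pc$ with support contained in $T$ and lattice points of $m\pi_T(\Pc)$: given $\zb \in m\pi_T(\Pc) \cap \ZZ^{|T|}$, lift it to some $\yb \in m\Pc$ with $\pi_T(\yb) = \zb$, and then truncate $\yb$ by zeroing the coordinates outside $T$. Anti-blocking guarantees that this truncation still lies in $m\Pc$, and it is integral because $\zb$ is. This yields $L_{\pi_T(\Pc)}(m) = \sum_{S \subseteq T} N_S(m)$, and M\"obius inversion combined with the binomial identity $\sum_{U \subseteq [d]\setminus T} 2^{|T|+|U|}(-1)^{|U|} = 2^{|T|}(-1)^{d-|T|}$ produces the Ehrhart identity displayed above.

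For the last step I would pass to generating functions using $\sum_m L_{\Qc}(m) x^m = h^*(\Qc,x)/(1-x)^{\dim\Qc+1}$ applied term by term, multiply both sides by $(1-x)^{d+1}$, and use $(-1)^{d-|T|}(1-x)^{d-|T|} = (x-1)^{d-|T|}$; grouping terms by $j = |T|$ then yields the claimed formula. The main obstacle I foresee is the bijection in the preceding paragraph, and more specifically verifying that each $\pi_T(\Pc)$ is a lattice polytope of dimension exactly $|T|$ so that its Ehrhart series really has denominator $(1-x)^{|T|+1}$. This should follow because $\dim \Pc = d$ together with anti-blocking forces $\Pc$ to contain a small full-dimensional box $[0,\epsilon]^d$ at the origin, whose image under $\pi_T$ is full-dimensional in $\RR^{|T|}$.
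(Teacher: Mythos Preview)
Your argument is correct. Both you and the paper reduce the claim to the Ehrhart-polynomial identity
\[
L_{\Pc^\pm}(m)=\sum_{j=0}^d 2^{\,j}(-1)^{d-j}\sum_{|J|=j}L_{\pi_J(\Pc)}(m)
\]
and then pass to generating functions in the same way. The difference lies in how that identity is obtained. The paper decomposes $\Pc^\pm$ into its $2^d$ orthant pieces $\Pc^\pm\cap\RR^d_\varepsilon$ (each a reflected copy of $\Pc$), observes that the intersection of two adjacent pieces along $\{x_k=0\}$ is a reflected copy of $\pi_{[d]\setminus\{k\}}(\Pc)$, and applies inclusion--exclusion over this cover. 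You instead stratify the lattice points of $m\Pc^\pm$ directly by their support, use the anti-blocking hypothesis to identify $L_{\pi_T(\Pc)}(m)$ with the number of lattice points of $m\Pc$ supported inside $T$, and then M\"obius-invert. The two derivations are dual reorganizations of the same count: the paper indexes by sign patterns $\varepsilon$, you index by supports $S$. Your version has the mild advantage of making the role of the anti-blocking hypothesis completely explicit (it is exactly what is needed for the bijection $m\Pc\cap\{x_i=0:i\notin T\}\cap\ZZ^d\leftrightarrow m\,\pi_T(\Pc)\cap\ZZ^{|T|}$), whereas the paper's geometric picture makes the appearance of the projections $\pi_J(\Pc)$ as faces on coordinate hyperplanes more transparent. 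Your handling of the dimension issue for $\pi_T(\Pc)$ is fine: a full-dimensional anti-blocking polytope has an interior point $\yb\in\RR_{>0}^d$, and then $[0,y_1]\times\cdots\times[0,y_d]\subset\Pc$ by anti-blocking, so each coordinate projection is full-dimensional.
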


\begin{proof}
The proof is similar to the discussion in \cite[Proof of Proposition 3.1]{OTinterior}.
The intersection of 
	$\Pc^\pm \cap \RR_\varepsilon^d$ and $\Pc^\pm \cap \RR_{\varepsilon'}^d$
	is of dimension $d-1$ if and only if $\varepsilon - \varepsilon' \in \{\pm 2 \eb_1, \ldots, \pm 2 \eb_d\}$.
Moreover, if $\varepsilon - \varepsilon' = 2 \eb_k$, then 
	$$
	(\Pc^\pm \cap \RR_\varepsilon^d) \cap (\Pc^\pm \cap \RR_{\varepsilon'}^d)
	= 
	\Pc^\pm \cap \RR_\varepsilon^d \cap\RR_{\varepsilon'}^d
	\simeq
	\pi_{[d] \setminus \{k\}} (\Pc^\pm) \cap \RR_{ \pi_{[d] \setminus \{k\}} (\varepsilon)}^{d-1}
   \simeq
   \pi_{[d] \setminus \{k\}} (\Pc)
   .
	$$
	Hence the Ehrhart polynomial $L_{\Pc^\pm}(m)$ satisfies the following:
	$$
	L_{\Pc^\pm}(m) = \sum_{j=0}^d \ \ 
	2^j (-1)^{d-j}  
\sum_{J \subset [d], \ |J| = j} L_{\pi_J (\Pc)}(m).
	$$
	Thus the Ehrhart series satisfies
	\begin{eqnarray*}
		\frac{h^*(\Pc^\pm, x)}{(1-x)^{d+1}}
		&=&
		\sum_{j=0}^d \ \ 
		2^j (-1)^{d-j}  
\sum_{J \subset [d], \ |J| = j} 
		\frac{h^*(\pi_J (\Pc), x)}{(1-x)^{j+1}},
	\end{eqnarray*}
	as desired.
\end{proof}

We now prove Theorem~\ref{hpolypm}.

\begin{proof}[Proof of Theorem~\ref{hpolypm}]
Given $J = \{j_1,\dots, j_r\} \subset [d]$ and $\varepsilon \in \{-1,1\}^{r}$, let
$$\RR_{J, \varepsilon}^d =
\{ \xb = (x_1,\ldots, x_d) \in \RR^d  : 
\pi_J(\xb) \in \RR_\varepsilon^r
\mbox{ and } x_j =0  \mbox{ for all } j \notin J  \}.$$
It then follows that $\Pc \cap\RR_{J, \varepsilon}^d$ is equal to 
$\pi_J(\Pc_{\varepsilon'})^\pm \cap \RR_\varepsilon^r$,
where $\pi_J(\varepsilon') = \varepsilon$.
Note that, given $J = \{j_1,\dots, j_r\} \subset [d]$ and $\varepsilon \in \{-1,1\}^{r}$,
we have
$
| \{ \varepsilon' \in \{-1,1\}^d :  \pi_J(\varepsilon') =  \varepsilon \} |
= 2^{d-r}.
$
Thus 
\begin{eqnarray*}
h^*(\Pc, x)
&=& 
\sum_{j=0}^d (x-1)^{d-j} 
\sum_{J \subset [d],\ |J| = j } \ \ 
\sum_{\varepsilon \in \{-1,1\}^j}
h^*(\Pc \cap\RR_{J, \varepsilon}^d, x)\\
&=&
\sum_{j=0}^d (x-1)^{d-j} 
\sum_{\varepsilon \in \{-1,1\}^d}\ \ 
\sum_{J \subset [d],\ |J| = j }
\frac{1}{2^{d-j}}
h^*(\pi_J(\Pc_{\varepsilon}), x)\\
&=&
\frac{1}{2^{d}}
\sum_{\varepsilon \in \{-1,1\}^d}\ \ 
\sum_{j=0}^d 2^j (x-1)^{d-j} 
\sum_{J \subset [d],\ |J| = j }
h^*(\pi_J(\Pc_{\varepsilon}), x)\\
&=&
\frac{1}{2^{d}}
\sum_{\varepsilon \in \{-1,1\}^d}\ \ 
h^*(\Pc_\varepsilon^\pm, x)
\end{eqnarray*}
by  Proposition~\ref{hpolyformula}.
\end{proof}

Combining Theorem \ref{hpolypm} and Propositions \ref{prop:enrichedchainhpoly} and \ref{hpolymain}, we have the following.

\begin{Corollary}
\label{average_cor}
Let $\Pc \subset \RR^d$ be a locally anti-blocking reflexive polytope.
If every $\Pc \cap \RR_\varepsilon^d$ is the intersection of $\RR_\varepsilon^d$ and either an enriched chain polytope or a symmetric edge reflexive polytope of type B, then the $h^*$-polynomial of $\Pc$ is $\gamma$-positive.
\end{Corollary}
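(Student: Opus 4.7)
The plan is to apply Theorem~\ref{hpolypm} verbatim and then identify the summands as $h^*$-polynomials known to be $\gamma$-positive from the preceding propositions, closing with the standard observation that a convex combination of $\gamma$-positive polynomials of the same degree is $\gamma$-positive.

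First, I would invoke the definition of locally anti-blocking: for each $\varepsilon \in \{-1,1\}^d$, choose an anti-blocking lattice polytope $\Pc_\varepsilon \subset \RR_{\ge 0}^d$ of dimension $d$ with $\Pc \cap \RR_\varepsilon^d = \Pc_\varepsilon^\pm \cap \RR_\varepsilon^d$. By hypothesis, each $\Pc_\varepsilon^\pm$ is either an enriched chain polytope $\Cc_P^{(e)}$ for some naturally labeled finite poset $P$ on $[d]$, or a symmetric edge reflexive polytope $\Bc_G$ of type B for some bipartite graph $G$ on $[d]$. Both types of polytopes are reflexive lattice polytopes of dimension $d$, so by Hibi's criterion each $h^*(\Pc_\varepsilon^\pm,x)$ is a palindromic polynomial of degree $d$.

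Next, I would apply Proposition~\ref{prop:enrichedchainhpoly} in the enriched chain polytope case and Proposition~\ref{hpolymain} in the type B symmetric edge polytope case to conclude that $h^*(\Pc_\varepsilon^\pm, x)$ is $\gamma$-positive for every $\varepsilon \in \{-1,1\}^d$. Explicitly, for each $\varepsilon$ one can write
\[
h^*(\Pc_\varepsilon^\pm, x) = \sum_{i=0}^{\lfloor d/2 \rfloor} \gamma_{i,\varepsilon}\, x^i (1+x)^{d-2i}
\]
with $\gamma_{i,\varepsilon} \ge 0$.

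Finally, Theorem~\ref{hpolypm} gives
\[
h^*(\Pc, x) = \frac{1}{2^d} \sum_{\varepsilon \in \{-1,1\}^d} h^*(\Pc_\varepsilon^\pm, x) = \sum_{i=0}^{\lfloor d/2 \rfloor} \left( \frac{1}{2^d} \sum_{\varepsilon \in \{-1,1\}^d} \gamma_{i,\varepsilon} \right) x^i (1+x)^{d-2i},
\]
and since each coefficient $\frac{1}{2^d}\sum_\varepsilon \gamma_{i,\varepsilon}$ is nonnegative, $h^*(\Pc,x)$ is $\gamma$-positive. This last step is really only the remark already embedded in the ``in particular'' clause of Theorem~\ref{hpolypm}, so there is no genuine obstacle; the only point requiring attention is the uniformity of the degree of the $h^*$-polynomials being averaged, which is guaranteed by reflexivity of each $\Pc_\varepsilon^\pm$.
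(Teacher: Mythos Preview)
Your proposal is correct and follows exactly the paper's approach: the paper states the corollary as an immediate consequence of combining Theorem~\ref{hpolypm} with Propositions~\ref{prop:enrichedchainhpoly} and~\ref{hpolymain}, and that is precisely what you do. Your explicit remark about the uniformity of degree (ensured by reflexivity of each $\Pc_\varepsilon^\pm$) is a welcome clarification of why the ``in particular'' clause of Theorem~\ref{hpolypm} applies, but otherwise there is nothing to add.
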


Finally, we conjecture the following:
\begin{Conjecture}
\label{gammapositiveconjecture}
	The $h^*$-polynomial of any locally anti-blocking reflexive polytope is $\gamma$-positive.
\end{Conjecture}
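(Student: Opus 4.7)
The plan is to reduce Conjecture~\ref{gammapositiveconjecture} to a cleaner statement about unconditional hulls of stable set polytopes of perfect graphs, and then to attack the reduced claim by geometric and combinatorial routes. First, by Proposition~\ref{KOSstable}, any locally anti-blocking reflexive polytope $\Pc \subset \RR^d$ satisfies $\Pc \cap \RR^d_\varepsilon = \Qc_{G_\varepsilon}^\pm \cap \RR^d_\varepsilon$ for some perfect graphs $G_\varepsilon$ on $[d]$. Taking $\Pc_\varepsilon = \Qc_{G_\varepsilon}$ in Theorem~\ref{hpolypm} gives
$$
h^*(\Pc, x) = \frac{1}{2^d} \sum_{\varepsilon \in \{-1,1\}^d} h^*(\Qc_{G_\varepsilon}^\pm, x).
$$
Each $\Qc_{G_\varepsilon}^\pm$ is reflexive of dimension $d$, so all summands are palindromic of degree $d$, and a nonnegative linear combination of $\gamma$-positive palindromic polynomials of the same degree is itself $\gamma$-positive (expand in the basis $\{x^i(1+x)^{d-2i}\}_i$). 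Hence it suffices to establish the \textbf{Reduced Claim}: for every perfect graph $G$ on $[d]$, $h^*(\Qc_G^\pm, x)$ is $\gamma$-positive.

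I would attack the Reduced Claim in two complementary ways. On the geometric side, I would seek a flag regular unimodular triangulation of $\Qc_G^\pm$ all of whose maximal simplices contain the origin. By the result of Bruns--R\"omer, $h^*(\Qc_G^\pm, x)$ would then equal the $h$-polynomial of a flag triangulation of a sphere, bringing the Gal Conjecture to bear directly. Such triangulations are known to exist for stable set polytopes of perfect graphs (pulling triangulations along chains of faces corresponding to maximal antichains/cliques), and the construction $\Pc \mapsto \Pc^\pm$ should transport these triangulations across the coordinate hyperplanes by reflection, preserving flagness and regularity.

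On the combinatorial side, I would attempt to unify the peak-polynomial formula of Proposition~\ref{prop:enrichedchainhpoly} (available when $G$ is a comparability graph, so that $\Qc_G^\pm$ is an enriched chain polytope) with the interior-polynomial formula of Proposition~\ref{hpolymain} (available when $G$ is bipartite, so that $\Qc_G^\pm$ is close to a symmetric edge polytope of type B). The aim is to produce a statistic on clique covers of $G$, or on a suitable analogue of linear extensions, whose generating function equals $h^*(\Qc_G^\pm, x)$ and whose $\gamma$-nonnegativity is visible by a valley/peak grouping argument. As an intermediate target I would handle chordal perfect graphs first, where a perfect elimination ordering supplies enough structure to imitate the poset proof of Proposition~\ref{prop:enrichedchainhpoly}.

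The main obstacle, and the reason the statement remains conjectural, is twofold. First, the triangulation route yields $\gamma$-positivity only conditionally on the Gal Conjecture, which is itself a deep open problem. Second, the combinatorial route must produce a uniform formula that specializes correctly to both left peak polynomials of posets and to interior polynomials of hypergraphs; these two families overlap only on very restricted graphs (essentially forests), so the overlap does not pin down the candidate statistic. Overcoming this ambiguity appears to require a genuinely new combinatorial model of $h^*(\Qc_G^\pm, x)$ in terms of perfect-graph data such as clique covers or simplicial vertices.
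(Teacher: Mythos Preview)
The statement you are addressing is a \emph{conjecture} in the paper, not a theorem; the paper offers no proof. What the paper does provide, immediately after stating the conjecture, is precisely the reduction you carry out in your first paragraph: by Theorem~\ref{hpolypm} and Proposition~\ref{KOSstable}, it suffices to show that $h^*(\Qc_G^\pm,x)$ is $\gamma$-positive for every perfect graph $G$. Your derivation of this Reduced Claim is correct and matches the paper exactly.

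Everything beyond that reduction is your own speculation, and you yourself recognize that neither route closes the gap. On the geometric side, flag regular unimodular triangulations of $\Qc_G^\pm$ with the origin in every maximal cell do indeed exist (this is in \cite{KOS}), so that part of your plan is not hypothetical; but, as you note, it only reduces the conjecture to Gal's Conjecture, which is open. On the combinatorial side, your parallel between Propositions~\ref{prop:enrichedchainhpoly} and~\ref{hpolymain} is a bit off: the symmetric edge polytope $\Bc_H=B_H^\pm$ is built from the simplicial complex of vertices and edges of $H$, not from the stable set complex of a bipartite graph, so ``$G$ bipartite $\Rightarrow$ $\Qc_G^\pm$ is (close to) a type~B polytope'' is not correct. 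The two known $\gamma$-positive families overlap with $\Qc_G^\pm$ only when $G$ is a comparability graph (Proposition~\ref{prop:enrichedchainhpoly}); the type~B family is a different specialization of anti-blocking polytopes.

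In summary: your reduction is exactly the paper's, and the rest is an honest outline of why the conjecture remains open rather than a proof.
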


Thanks to Theorem~\ref{hpolypm} and Proposition~\ref{KOSstable},
in order to prove Conjecture~\ref{gammapositiveconjecture},
it is enough to study unconditional lattice polytopes $\Qc_G^\pm$
where $\Qc_G$ is the stable set polytope of a perfect graph $G$.

\section{Symmetric edge polytopes of type A}
\label{sec:typeA}

Let $G$ be a finite simple graph on the vertex set $[d]$ and the edge set $E(G)$.
The {\em symmetric edge polytope} ${\mathcal A}_G \subset \RR^d$ of type A
is the convex hull of the set
$$
A(G) = \{ \pm( \eb_i - \eb_j ) \in \RR^d: \{ i, j\} \in E(G)  \}.
$$
The polytope ${\mathcal A}_G$ is introduced in \cite{fivemen, CSC} 
and called a ``symmetric edge polytope of $G$.''

\begin{Example}
\label{ex:cross}
Let $G$ be a tree on $[d]$.
Then $\Ac_G$ is unimodularly equivalent to a $(d-1)$-dimensional cross
polytope.
Hence we have $h^*(\Ac_G,x)= (x+1)^{d-1}$.
\end{Example}

It is known \cite[Proposition 4.1]{fivemen} that the dimension of $\Ac_G$ is $d-1$ if and only if $G$ is connected.
Higashitani \cite{Higashi} proved that ${\mathcal A}_G$ is simple if and only if
${\mathcal A}_G$ is smooth Fano if and only if
$G$ contains no even cycles.
It is known \cite{fivemen, CSC} that ${\mathcal A}_G$ is unimodularly equivalent to a reflexive polytope 
having a regular unimodular triangulation.
In particular,  $h^*$-polynomial of ${\mathcal A}_{G}$ is palindromic and unimodal.
For a complete bipartite graph $K_{\ell, m}$, it is known \cite{HJMsymmetric} that the
$h^*$-polynomial of ${\mathcal A}_{K_{\ell, m}}$ is real-rooted and hence $\gamma$-positive.


\subsection{Recursive formulas for $h^*$-polynomials}

In this section, we give several recursive formulas of $h^*$-polynomials of $\Ac_G$
when $G$ belongs to certain classes of graphs.
By the following fact, we may assume that $G$ is 2-connected if needed.

\begin{Proposition}
\label{2connected}
 Let $G$ be a graph and let
 $G_1,\ldots, G_s$ be $2$-connected components of $G$.
Then the $h^*$-polynomial of ${\mathcal A}_{G}$ satisfies
	\[
	h^*(\Ac_G,x)=h^*(\Ac_{G_1},x) \cdots h^*(\Ac_{G_s},x).
	\]
\end{Proposition}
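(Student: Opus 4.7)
The plan is to induct on the number $s$ of 2-connected components (blocks), reducing the theorem to the case $s=2$, and in that case to identify $\Ac_G$ with the free sum $\Ac_{G_1} \oplus \Ac_{G_2}$ of reflexive polytopes. The product formula for the $h^*$-polynomial of a free sum of reflexive polytopes (due to Braun) will then complete the argument.

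Assume $s=2$. If $G_1$ and $G_2$ lie in different connected components of $G$, then the generators $\pm(\eb_i - \eb_j)$ of $\Ac_{G_\ell}$ are supported on coordinates indexed by $V(G_\ell)$, and these supports are disjoint, so $\Ac_G = \Ac_{G_1} \oplus \Ac_{G_2}$ on the nose. Otherwise, $G_1$ and $G_2$ share a unique cut vertex $v$, and here I would exploit the fact that $\Ac_G$ lies in the hyperplane $H = \{x \in \RR^d : \sum_i x_i = 0\}$, parametrizing $H$ by the coordinates $(x_i)_{i \neq v}$. Since $\{\eb_i - \eb_v : i \neq v\}$ is a $\ZZ$-basis of $H \cap \ZZ^d$, this parametrization is a unimodular equivalence. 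Under it, a generator $\pm(\eb_i - \eb_j)$ of $\Ac_G$ corresponding to an edge of $G_\ell$ maps into the coordinate subspace indexed by $V(G_\ell) \setminus \{v\}$ (either as $\pm(\eb_i' - \eb_j')$ if $v \notin \{i,j\}$, or as $\pm \eb_i'$ if $j=v$). The two subspaces $\RR^{V(G_1) \setminus \{v\}}$ and $\RR^{V(G_2) \setminus \{v\}}$ are complementary, so the image of $\Ac_G$ is precisely $\Ac_{G_1} \oplus \Ac_{G_2}$, each factor being obtained from $\Ac_{G_\ell}$ by the analogous parametrization.

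Since each $\Ac_{G_\ell}$ is unimodularly equivalent to a reflexive polytope containing the origin in its interior, Braun's product formula $h^*(P \oplus Q, x) = h^*(P, x) \cdot h^*(Q, x)$ for free sums of reflexive polytopes (together with the invariance of $h^*$-polynomials under unimodular equivalence) yields $h^*(\Ac_G, x) = h^*(\Ac_{G_1}, x) \cdot h^*(\Ac_{G_2}, x)$. Iterating on the block decomposition gives the general formula for arbitrary $s$.

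The main technical obstacle is the careful bookkeeping in the cut-vertex case: one must verify that the parametrization of $H$ genuinely realizes the free sum structure, and in particular that the origin lies in the relative interior of each summand $\Ac_{G_\ell}$ (a prerequisite for Braun's formula), which is guaranteed because the $\Ac_{G_\ell}$ are reflexive.
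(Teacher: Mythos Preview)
Your proof is correct and follows essentially the same approach as the paper's: identify $\Ac_G$ as the free sum of the $\Ac_{G_i}$ and invoke Braun's product formula for $h^*$-polynomials. The paper's proof is a single sentence asserting the free-sum decomposition and citing \cite{Braun}, whereas you supply the explicit unimodular change of coordinates (projecting out the shared cut vertex via the basis $\{\eb_i-\eb_v:i\neq v\}$ of the hyperplane $\sum_i x_i=0$) that actually verifies it.
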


\begin{proof}
Since $\Ac_G$ is the free sum of reflexive polytopes
$\Ac_{G_1}, \ldots, \Ac_{G_s}$,
a desired conclusion follows from \cite[Theorem 1]{Braun}.
\end{proof}

The {\em suspension} $\widehat{G}$ of a graph $G$ is the graph
on the vertex set $[d+1]$ and the edge set
$$ E(G)\cup
\{
\{i, d+1\} : i \in [d]
\} 
.$$
We now study the $h^*$-polynomial of ${\mathcal A}_{\widehat{G}}$.
Given a subset $S \subset [d]$,
$$
E_S := \{ e \in E(G) : |e \cap S| =1 \}
$$
is called a {\it cut} of $G$.
For example, we have $E_\emptyset = E_{[d]} = \emptyset$.
In general, it follows that $E_S = E_{[d] \setminus S}$.
We identify $E_S$ with the subgraph of $G$ on the vertex set $[d]$
and the edge set $E_S$.
By definition, $E_S$ is a bipartite graph.
Let ${\rm Cut}(G)$ be the set of all cuts of $G$.
Note that $|{\rm Cut}(G)| = 2^{d-1}$.
From Theorem~\ref{hpolypm} and  Proposition~\ref{hpolymain},
we have the following.

\begin{Theorem}
\label{thm:Asuspension}
	Let $G$ be a finite graph on $[d]$.
	Then ${\mathcal A}_{\widehat{G}}$ is unimodularly equivalent to a locally anti-blocking 
reflexive polytope whose $h^*$-polynomial is
$$
h^*({\mathcal A}_{\widehat{G}}, x)
=  \frac{1}{2^{d-1}}\sum_{H \in {\rm Cut}(G)}
h^*({\mathcal B}_H, x)
=  (x+1)^d f_G \left(  \frac{4x}{(x+1)^2} \right)
,
	$$
where 
$$
f_G (x)= \frac{1}{2^{d-1}} \sum_{H \in {\rm Cut}(G)} I_{\widetilde{H}} (x)
.$$
	In particular, $	h^*({\mathcal A}_{\widehat{G}}, x)$ is $\gamma$-positive.
	Moreover, $	h^*({\mathcal A}_{\widehat{G}}, x)$ is real-rooted if and only if $f_G(x)$ is real-rooted.
\end{Theorem}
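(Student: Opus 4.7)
The plan is to realize $\Ac_{\widehat G}$ as an explicit locally anti-blocking polytope $P \subset \RR^d$, and then read off the claimed formula, the $\gamma$-positivity, and the real-rootedness equivalence by applying Theorem~\ref{hpolypm} and Proposition~\ref{hpolymain}.

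For the unimodular equivalence, every vertex $\pm(\eb_i - \eb_j)$ of $\Ac_{\widehat G} \subset \RR^{d+1}$ has coordinate sum zero, so $\Ac_{\widehat G}$ lies in the hyperplane $\{x_1 + \cdots + x_{d+1} = 0\}$. The projection $\pi$ forgetting the last coordinate restricts to a lattice isomorphism of this hyperplane with $\RR^d$ (its inverse appends $x_{d+1} = -\sum_{i \le d} x_i$), and applying $\pi$ to the vertex set identifies $\Ac_{\widehat G}$ with
\[
P := \conv\bigl(\{\pm\eb_i : i \in [d]\} \cup \{\pm(\eb_i - \eb_j) : \{i,j\} \in E(G)\}\bigr);
\]
since $\Ac_{\widehat G}$ is reflexive, so is $P$.

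The core step is to show $P \cap \RR^d_\varepsilon = (B_{E_{S_\varepsilon}})^\pm \cap \RR^d_\varepsilon = \varepsilon\, B_{E_{S_\varepsilon}}$ for each $\varepsilon \in \{-1,1\}^d$, where $S_\varepsilon := \{i : \varepsilon_i = 1\}$. The inclusion $\supset$ is immediate, since the vertices $0$, $\varepsilon_i \eb_i$, and $\varepsilon_i \eb_i + \varepsilon_j \eb_j = \pm(\eb_i - \eb_j)$ (for cut edges $\{i,j\} \in E_{S_\varepsilon}$) of $\varepsilon\, B_{E_{S_\varepsilon}}$ all lie in $P$. For the reverse inclusion, after reflection by $\varepsilon$ the vertices of $\varepsilon P$ become $\pm\eb_i$, $\pm(\eb_i - \eb_j)$ for non-cut edges, and $\pm(\eb_i + \eb_j)$ for cut edges. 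Since the cut $E_{S_\varepsilon}$ is bipartite (hence triangle-free), $B_{E_{S_\varepsilon}}$ is the stable set polytope of its complement, a perfect graph; by the Chv\'atal--Lov\'asz theorem,
\[
B_{E_{S_\varepsilon}} = \bigl\{x \ge 0 : \textstyle\sum_{i \in K} x_i \le 1 \text{ for every independent set } K \text{ of } E_{S_\varepsilon}\bigr\}.
\]
A vertex-by-vertex check shows $\sum_{i \in K} v_i \le 1$ for every such $K$ and every vertex $v$ of $\varepsilon P$; the only subtle case is $v = \eb_i + \eb_j$ for a cut edge $\{i,j\}$, where the independence of $K$ forbids $\{i,j\} \subset K$. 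Passing to a convex representation of $y \in (\varepsilon P) \cap \RR^d_{\ge 0}$ then yields $\sum_{i \in K} y_i \le 1$, so $y \in B_{E_{S_\varepsilon}}$.

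With the locally anti-blocking structure in place, Theorem~\ref{hpolypm} gives $h^*(\Ac_{\widehat G}, x) = 2^{-d}\sum_{\varepsilon} h^*(\Bc_{E_{S_\varepsilon}}, x)$, and the symmetry $E_S = E_{[d]\setminus S}$ collapses this to $2^{-(d-1)}\sum_{H \in {\rm Cut}(G)} h^*(\Bc_H, x)$. Since each cut $H$ is bipartite, Proposition~\ref{hpolymain} rewrites each summand as $(x+1)^d I_{\widetilde H}(4x/(x+1)^2)$, and factoring out $(x+1)^d$ yields the stated formula in terms of $f_G$. The $\gamma$-positivity of each $h^*(\Bc_H, x)$ propagates to the average by the last clause of Theorem~\ref{hpolypm}, and with $f_G(y) = \sum_i \gamma_i y^i$, the $\gamma$-positive expansion $h^*(\Ac_{\widehat G}, x) = \sum_i 4^i \gamma_i\, x^i (x+1)^{d-2i}$ has $\gamma$-polynomial $f_G(4x)$, which (by the equivalence of real-rootedness for $\gamma$-positive polynomials and their $\gamma$-polynomials recorded in Section~1) is real-rooted iff $f_G$ is. The main obstacle is the locally anti-blocking verification in the middle step, which is tractable precisely because the clique-inequality description of $B_{E_{S_\varepsilon}}$ reduces it to a vertex-by-vertex computation.
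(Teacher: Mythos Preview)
Your proof is correct and follows essentially the same route as the paper's: realize $\Ac_{\widehat G}$ via the projection onto the first $d$ coordinates as the polytope $P$, identify $P\cap\RR^d_\varepsilon$ with $\varepsilon\,B_{E_{S_\varepsilon}}$, and then apply Theorem~\ref{hpolypm} and Proposition~\ref{hpolymain}. The only difference is that the paper simply asserts the equality $P\cap\RR^d_\varepsilon=\Bc_{E_{S_\varepsilon}}\cap\RR^d_\varepsilon$, whereas you supply a full argument for the nontrivial inclusion via the clique-inequality description of $B_{E_{S_\varepsilon}}=\Qc_{\overline{E_{S_\varepsilon}}}$ (valid since $E_{S_\varepsilon}$ is bipartite and hence $\overline{E_{S_\varepsilon}}$ is perfect), and you also spell out the $\gamma$-positivity and real-rootedness deductions that the paper leaves implicit.
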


\begin{proof}
Let ${\mathcal P} \subset \RR^d$ be the convex hull of
$$
\{ \pm \eb_1, \dots, \pm \eb_d\}
\cup
 \{ \pm( \eb_i - \eb_j ): \{ i, j\} \in E(G)  \}.
$$
Then ${\mathcal A}_{\widehat{G}}$ is lattice isomorphic to $\Pc$. 
Given $\varepsilon = (\varepsilon_1,\ldots, \varepsilon_d) \in \{-1,1\}^d$,
let $S_\varepsilon = \{ i \in [d] : \varepsilon_i = 1\}$.
Then ${\mathcal P} \cap \RR_\varepsilon^d$
is the convex hull of 
$$
\{{\bf 0}\} \cup 
\{ \varepsilon_i \eb_i : i \in [d] \}
\cup
 \{ \eb_i - \eb_j : \{ i, j\} \in E_{S_\varepsilon} ,  i \in S_\varepsilon\}.
$$
Hence  ${\mathcal P} \cap \RR_\varepsilon^d = \Bc_{E_{S_\varepsilon}} \cap \RR_\varepsilon^d$.
Thus $\Pc$ is a locally anti-blocking polytope and 
$$h^*({\mathcal A}_{\widehat{G}}, x)
=
\frac{1}{2^{d-1}}
\sum_{H \in {\rm Cut}(G)}h^*({\mathcal B}_H, x)
$$
by Theorem~\ref{hpolypm}.
\end{proof}

Let $G$ be a graph and let $e=\{i,j\}$ be an edge of $G$.
Then the graph $G/e$ obtained by the procedure 
\begin{itemize}
\item[(i)]
Delete $e$ and identify the vertices $i$ and $j$;
\item[(ii)]
Delete the multiple edges that may be created while (i)
\end{itemize}
is called the graph obtained from $G$ by {\em contracting} the edge $e$. 
Next, 
we will show that,
for any bipartite graph $G$ and $e \in E(G)$,
 $h^*({\mathcal A}_G, x)$ is $\gamma$-positive
if and only if so is $h^*(\Ac_{G/e}, x)$.
In order to show this fact, we need the theory of Gr\"obner bases of toric ideals.
Given a graph $G$ on the vertex set $[d]$ and the edge set $E(G)=\{e_1 ,\dots, e_n\}$,
let 
$$\Rc=K[t_1, t_1^{-1}, \dots, t_d, t_d^{-1} ,s ]$$
be the Laurent polynomial ring over a field $K$
and let 
$$\Sc=K[x_1, \dots, x_n, y_1, \dots, y_n, z]$$
be the polynomial ring over $K$.
We define the ring homomorphism $ \pi : \Sc \rightarrow \Rc$ by setting $\pi(z) = s$, 
$\pi(x_k) = t_i t_j^{-1} s$ and $\pi(y_k) = t_i^{-1} t_j s$
if $e_k = \{i,j\} \in E(G)$ and $i < j$.
The {\em toric ideal} $I_{\Ac_G}$ of $\Ac_G$ is the kernel of $\pi$.
(See, e.g., \cite{binomialideals} for details on toric ideals and Gr\"obner bases.)
We now define the notation given in \cite{HJMsymmetric}.
For any oriented edge $e_i$,
let $p_i$ denote the corresponding variable, i.e. 
$p_i = x_i$ or $p_i = y_i$ depending on the orientation
and let $\{p_i, q_i\} = \{x_i, y_i\}$.
Let $\Gc(G)$ be the set of all binomials $f$ satisfying one of the following:
\begin{equation}
\label{even}
f= \prod_{e_i \in I} p_i - \prod_{e_i \in C \setminus I} q_i,
\end{equation}
where $C$ is an even cycle in $G$ of length $2k$ with a fixed orientation,
and $I$ is a $k$-subset of $C$
such that $e_\ell \notin I$ for $\ell = \min\{i :  e_i \in C\}$;
\begin{equation}
\label{odd}
f= \prod_{e_i \in I} p_i - z\prod_{e_i \in C \setminus I} q_i,
\end{equation}
where $C$ is an odd cycle in $G$ of length $2k+1$ and $I$ is a $(k+1)$-subset of $C$;
\begin{equation}
\label{pmzero}
f = x_i y_i - z^2,
\end{equation}
where $1 \leq i \leq n$.
Then $\Gc(G)$ is a Gr\"obner basis of $I_{\Ac_G}$ with respect to
 a reverse lexicographic order $<$ induced by the ordering
$z < x_1 < y_1 < \dots < x_n < y_n$
 (\cite[Proposition 3.8]{HJMsymmetric}).
Here the initial monomial of each binomial is the first monomial.
Using this Gr\"obner basis, we have the following.

\begin{Proposition}
\label{contraction}
Let $G$ be a bipartite graph on $[d]$ and let $e \in E(G)$.
Then we have 
$$h^*(\Ac_G, x) = (x+1) h^*(\Ac_{G/e}, x).$$
\end{Proposition}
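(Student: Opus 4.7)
The plan is to compute the Hilbert series of $R := K[\Ac_G] = \Sc/I_{\Ac_G}$ via a well-chosen quotient and then read off the $h^*$-polynomial identity. Without loss of generality take $e = e_n$. Since $R$ is an integral domain (being the semigroup algebra of the affine monoid associated with the lattice polytope $\Ac_G$) and $x_n \neq z$ in $R$, the element $x_n - z$ is a non-zero-divisor. Set $R_1 := R/(x_n - z)$, so $\mathrm{HS}(R_1;t) = (1-t)\,\mathrm{HS}(R;t)$. The key input is the syzygy
\[
x_n y_n - z^2 \;=\; y_n(x_n - z) + z(y_n - z)
\]
in $\Sc$, which is valid in $R$ because $x_n y_n - z^2 \in \Gc(G) \subset I_{\Ac_G}$; this yields $z(y_n - z) = 0$ in $R_1$, so $z \in \mathrm{ann}_{R_1}(y_n - z)$, and symmetrically $(y_n - z) \in \mathrm{ann}_{R_1}(z)$.

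Two claims drive the argument. First, $R_1/(y_n - z) \cong K[\Ac_{G/e}]$, via the contraction specialization $t_p \mapsto t_q$ (where $e = \{p,q\}$). Here the bipartiteness of $G$ is essential: it guarantees that $p$ and $q$ have no common neighbors, so the toric kernel of $R \twoheadrightarrow K[\Ac_{G/e}]$ is generated by exactly the two binomials $(x_n - z)$ and $(y_n - z)$, with no further ``shortcut'' generators arising from length-two paths through a common neighbor of $p$ and $q$. Second, the annihilator identities $\mathrm{ann}_{R_1}(y_n - z) = z R_1$ and $\mathrm{ann}_{R_1}(z) = (y_n - z) R_1$ hold. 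I would prove this second claim by lifting to $\Sc$: if $f(y_n - z) \in (x_n - z) + I_{\Ac_G}$, then the Gr\"obner basis $\Gc(G)$ together with the displayed syzygy forces $f \in (z, x_n - z) + I_{\Ac_G}$; here bipartiteness enters by keeping $\Gc(G)$ free of odd-cycle binomials and thus keeping the local Gr\"obner-basis structure near $e_n$ tractable.

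Combining these claims, $z R_1 \cong \bigl(R_1/(y_n - z) R_1\bigr)(-1) \cong K[\Ac_{G/e}](-1)$ as graded modules, so taking Euler characteristics in the exact sequence
\[
0 \to (zR_1)(-1) \to R_1(-1) \xrightarrow{\,\cdot(y_n - z)\,} R_1 \to K[\Ac_{G/e}] \to 0
\]
yields $(1-t)\,\mathrm{HS}(R_1;t) = (1 - t^2)\,\mathrm{HS}(K[\Ac_{G/e}];t)$, hence $\mathrm{HS}(R_1;t) = (1+t)\,\mathrm{HS}(K[\Ac_{G/e}];t)$. Combining with $\mathrm{HS}(R_1;t) = (1-t)\,\mathrm{HS}(R;t)$ and translating via $\mathrm{HS}(K[\Ac_G];t) = h^*(\Ac_G, t)/(1-t)^d$ and $\mathrm{HS}(K[\Ac_{G/e}];t) = h^*(\Ac_{G/e}, t)/(1-t)^{d-1}$ delivers the desired identity $h^*(\Ac_G, t) = (1+t)\,h^*(\Ac_{G/e}, t)$. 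The main obstacle is the annihilator identity: it is the only step that requires genuine Gr\"obner-basis manipulation rather than formal Hilbert-series bookkeeping, and it is also the step that most sharply uses the hypothesis that $G$ is bipartite.
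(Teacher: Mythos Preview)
Your approach is genuinely different from the paper's, and the overall strategy is sound, but the two ``claims'' you isolate are exactly where the real content lies, and neither is established.

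The paper's proof never forms the quotient $R/(x_n-z)$ at all. Instead it takes $e=e_1$ (the edge with the \emph{smallest} variables in the reverse-lex order), introduces the auxiliary graph $G' = (G/e) \sqcup \{e_1'\}$ obtained by adjoining a disjoint edge to $G/e$, and then observes directly from the explicit description of the Gr\"obner bases $\Gc(G)$ and $\Gc(G')$ that $\{\mathrm{in}_<(f): f\in\Gc(G)\} = \{\mathrm{in}_<(f): f\in\Gc(G')\}$. This is a line-by-line bijection: even-cycle binomials for cycles avoiding $e_1$ stay put, even-cycle binomials for cycles through $e_1$ become odd-cycle binomials of $G/e$, and the $x_iy_i-z^2$ relations persist (bipartiteness enters by ensuring no odd-cycle binomials exist on the $G$ side and no multi-edges arise in $G/e$). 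Equal initial ideals force equal Hilbert series, and then $h^*(\Ac_{G'},x) = (x+1)\,h^*(\Ac_{G/e},x)$ follows from the free-sum factorization (Proposition~\ref{2connected}).

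Your first claim, that $R/(x_n-z,y_n-z)\cong K[\Ac_{G/e}]$, is not settled by the ``no common neighbor'' remark. That observation rules out extra \emph{linear} binomials in the kernel of $R\to K[\Ac_{G/e}]$, but you need the ideal $(x_n-z,y_n-z)$ to be the \emph{entire} (height-one, prime) kernel; equivalently, you need the image of $I_{\Ac_G}$ under $x_n,y_n\mapsto z$ to equal $I_{\Ac_{G/e}}$, not merely to sit inside it. Establishing that equality is essentially the same cycle-by-cycle matching the paper performs on initial terms. Your second claim, the annihilator identity $\mathrm{ann}_{R_1}(y_n-z)=zR_1$, you yourself flag as ``the main obstacle'' and propose to attack via $\Gc(G)$ --- but no argument is given, and the sketch (``bipartiteness keeps the local Gr\"obner structure tractable'') does not pin down which S-pair or reduction computation would close the gap. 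In short, your exact-sequence framework correctly reduces the statement to two algebraic facts, but proving those facts requires Gr\"obner-basis work of the same flavor and difficulty as the paper's direct initial-ideal comparison, and that work is not carried out here.
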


\begin{proof}
Let  $E(G)=\{e_1 ,\dots, e_n\}$ with $e = e_1 = \{i,j\}$.
Since $G$ is a bipartite graph, the Gr\"obner basis $\Gc(G)$ above consists of 
the binomials of the form (\ref{even}) and (\ref{pmzero}).

Since $G$ has no triangles, the procedure (ii) does not occur 
when we contract $e$ of $G$.
Hence $E(G/e) = \{e_2' , \dots, e_n' \}$ where $e_k'$ is obtained
from $e_k$ by identifying $i$ with $j$.
Let $G'$ be a graph obtained by adding an edge $e_1' = \{d+1, d+2\}$ to the graph $G/e$.
Then $\Gc(G')$ consists of all binomials $f$ satisfying one of the following:
\begin{equation}
f= \prod_{e_i \in I} p_i - \prod_{e_i \in C \setminus I} q_i,
\end{equation}
where $C$ is an even cycle in $G$ of length $2k$ with a fixed orientation and $e_1 \notin C$,
and $I$ is a $k$-subset of $C$
such that $e_\ell \notin I$ for $\ell = \min\{i :  e_i \in C\}$;
\begin{equation}
f= \prod_{e_i \in I} p_i - z\prod_{e_i \in C \setminus I} q_i,
\end{equation}
where $C \cup \{e_1\}$ is an even cycle in $G$ of length $2k+2$ and $I$ is a $(k+1)$-subset of $C$;
\begin{equation}
f = x_i y_i - z^2,
\end{equation}
where $1 \leq i \leq n$.
Hence $\{ {\rm in}_< (f)  :  f \in \Gc(G) \} = \{ {\rm in}_< (f)  :  f \in \Gc(G') \}$. 
By a similar argument as in the proof of \cite[Theorem 3.1]{HTperfect}, it follows that
$$
h^*(\Ac_G, x) =  h^*( \Ac_{G'}, x) =  h^*(\Ac_{\{e_1'\}}, x) h^*(\Ac_{G/e}, x)= (x+1) h^*(\Ac_{G/e}, x)
,$$
as desired.
\end{proof}

From Theorem \ref{thm:Asuspension},
Propositions~\ref{2connected} and \ref{contraction}
we have the following immediately.

\begin{Corollary}
\label{petapeta}
Let $G$ be a bipartite graph on $[d]$.
Then we have the following{\rm :}
\begin{itemize}
\item[(a)]
The $h^*$-polynomial $h^*(\Ac_{\widetilde{G}}, x) = (x+1) h^*(\Ac_{\widehat{G}}, x)$
is $\gamma$-positive.

\item[(b)]
If $G$ is obtained by gluing bipartite graphs $G_1$ and $G_2$ along with an edge $e$,
then
\begin{eqnarray*}
h^*(\Ac_G, x) &=&(x+1) h^*(\Ac_{G/e}, x)  \\
&=&(x+1) h^*(\Ac_{G_1/e}, x) h^*(\Ac_{G_2/e}, x)\\
&=&
h^*(\Ac_{G_1}, x) h^*(\Ac_{G_2}, x)/(x+1).
\end{eqnarray*}
\end{itemize}
\end{Corollary}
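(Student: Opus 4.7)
The plan is to assemble the three cited ingredients; no additional machinery is required.

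For part~(a), I would first identify the relationship between $\widetilde{G}$ and $\widehat{G}$ by an edge contraction. Writing $V_1\cup V_2=[d]$ for the bipartition of $G$, the edge set of $\widetilde{G}$ is
\[
E(G)\cup\{\{i,d+1\}:i\in V_1\}\cup\{\{j,d+2\}:j\in V_2\}\cup\{\{d+1,d+2\}\}.
\]
Contracting $e_0=\{d+1,d+2\}$ merges $d+1$ and $d+2$ into a single vertex that is adjacent to every element of $V_1\cup V_2=[d]$; since $V_1\cap V_2=\emptyset$, no multi-edges arise. Hence $\widetilde{G}/e_0=\widehat{G}$. Because $\widetilde{G}$ is bipartite, Proposition~\ref{contraction} applies and yields $h^*(\Ac_{\widetilde{G}},x)=(x+1)\,h^*(\Ac_{\widehat{G}},x)$. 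Theorem~\ref{thm:Asuspension} supplies a $\gamma$-expansion $h^*(\Ac_{\widehat{G}},x)=\sum_i\gamma_i\,x^i(1+x)^{d-2i}$ with $\gamma_i\ge 0$, and multiplying by $(x+1)$ simply raises each $(1+x)$-exponent by one, so $\gamma$-positivity is inherited by $h^*(\Ac_{\widetilde{G}},x)$.

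For part~(b), I would derive the three equalities in sequence. The first, $h^*(\Ac_G,x)=(x+1)\,h^*(\Ac_{G/e},x)$, is immediate from Proposition~\ref{contraction} because $G$ is bipartite. For the second, observe that since $G=G_1\cup_e G_2$ is glued along the single edge $e$, contracting $e$ identifies the two endpoints of $e$ into one vertex $v$ lying in both $G_1/e$ and $G_2/e$; hence $G/e$ is the one-vertex union of $G_1/e$ and $G_2/e$, and its $2$-connected components are precisely those of $G_1/e$ together with those of $G_2/e$. Applying Proposition~\ref{2connected} gives $h^*(\Ac_{G/e},x)=h^*(\Ac_{G_1/e},x)\,h^*(\Ac_{G_2/e},x)$, which combined with the first equality yields the second. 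For the third, I would apply Proposition~\ref{contraction} separately to $G_1$ and $G_2$, obtaining $h^*(\Ac_{G_k},x)=(x+1)\,h^*(\Ac_{G_k/e},x)$ for $k=1,2$; multiplying these and invoking the second equality produces $h^*(\Ac_{G_1},x)\,h^*(\Ac_{G_2},x)=(x+1)\cdot h^*(\Ac_G,x)$, which rearranges into the desired identity.

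No step is really an obstacle: the only delicate points are checking the identification $\widetilde{G}/\{d+1,d+2\}=\widehat{G}$ and the $2$-connected decomposition of $G/e$ under the one-vertex gluing, both of which are elementary combinatorial observations.
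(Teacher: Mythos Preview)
Your proposal is correct and follows exactly the route the paper intends: the corollary is stated as an immediate consequence of Theorem~\ref{thm:Asuspension} and Propositions~\ref{2connected} and~\ref{contraction}, and you have supplied the elementary verifications (that $\widetilde{G}/\{d+1,d+2\}=\widehat{G}$, and that $G/e$ is the one-vertex union of $G_1/e$ and $G_2/e$) needed to invoke them. Nothing further is required.
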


\smallskip

\noindent
\textbf{Remark.}
Corollary \ref{petapeta} (b) was recently generalized in \cite[Theorem~4.17]{DDM}.

\subsection{Pseudo-symmetric simplicial reflexive polytopes}
A lattice polytope $\Pc \subset \RR^d$ is called \textit{pseudo-symmetric} if there exists a facet $\Fc$ of $\Pc$ such that $-\Fc$ is also a facet of $\Pc$. 
Nill \cite{BNill} proved that 
any pseudo-symmetric simplicial reflexive polytope
$\Pc$ is a free sum of $\Pc_1, \dots, \Pc_s$,
where each $\Pc_i$ is one of the following:
\begin{itemize}
\item
cross polytope;

\item
del Pezzo polytope $V_{2m} = {\rm conv} ( \pm \eb_1,\dots,\pm \eb_{2m}, 
\pm( \eb_1 + \dots + \eb_{2m}) )$;

\item
pseudo-del Pezzo polytope $\widetilde{V}_{2m} = {\rm conv} ( \pm \eb_1,\dots,\pm \eb_{2m}, 
-\eb_1 - \dots - \eb_{2m} )$.
\end{itemize}
Note that a del Pezzo polytope is unimodularly equivalent to $\Ac_{C_{2m+1}}$
where $C_{2m+1}$ is an odd cycle of length $2m+1$ (see \cite{Higashi}).
The $h^*$-polynomial of $\Ac_{C_d}$ was essentially studied in the following papers
(see also the OEIS sequence A204621):
\begin{itemize}
\item
Conway--Sloane \cite[p.2379]{CS} computed $h^*(\Ac_{C_d},x)$ for small $d$ by 
using results of O'Keeffe \cite{O'Ke}
and gave a conjecture on the $\gamma$-polynomial of 
$h^*(\Ac_{C_d},x)$ (coincides with the $\gamma$-polynomial in Proposition~\ref{cycle h} below).
\item
General formulas for the coefficients of $h^*(\Ac_{C_d},x)$ were given by
Ohsugi--Shibata \cite{OhsugiShibata} and Wang--Yu \cite{WY}.
\end{itemize}
In order to give the $h^*$-polynomial of $\widetilde{V}_{2m}$,
we need the following lemma.

\begin{Lemma}
\label{deleteoneedge}
Let $G$ be a connected graph.
Suppose that an edge $e=\{i,j\} $ of $G$ is not a bridge.
Let $\Pc_e$ be the convex hull of $A(G) \setminus \{ \eb_i - \eb_j   \}$.
Then we have
$$
h^*(\Pc_e,x) = \frac{1}{2} ( h^*(\Ac_G,x) + h^*(\Ac_{G \setminus e},x) )
,$$
where $G \setminus e$ is the graph obtained by deleting $e$ from $G$.
\end{Lemma}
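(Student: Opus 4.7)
The plan is to prove two set-theoretic identities relating $\Pc_e$ and its reflection $-\Pc_e$ to $\Ac_G$ and $\Ac_{G\setminus e}$, and then apply inclusion--exclusion on lattice points. Write $v = \eb_i - \eb_j$, so that $\Pc_e = \conv(A(G)\setminus\{v\})$. Since $e$ is not a bridge, $G\setminus e$ is connected, so $\Ac_G$, $\Pc_e$, and $\Ac_{G\setminus e}$ are all $(d-1)$-dimensional polytopes in the hyperplane $H = \{x\in\RR^d : x_1 + \cdots + x_d = 0\}$. Because $\Ac_{G\setminus e}$ is centrally symmetric and full-dimensional in $H$, the origin lies in its relative interior.

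The two key identities I would establish are $\Pc_e \cup (-\Pc_e) = \Ac_G$ and $\Pc_e \cap (-\Pc_e) = \Ac_{G\setminus e}$. For the union, given $p = \sum_{w\in A(G)} \lambda_w w$ with $\lambda_w\ge 0$ and $\sum_w\lambda_w=1$, I set $\lambda = \min\{\lambda_v, \lambda_{-v}\}$ and invoke a convex combination $0 = \sum_{w\in A(G)\setminus\{\pm v\}} \mu_w w$ (which exists because $0\in\mathrm{relint}(\Ac_{G\setminus e})$). The identity $\lambda v + \lambda(-v) = 0 = \sum(2\lambda\mu_w) w$ lets me rewrite $p$ as a convex combination of $A(G)\setminus\{v\}$ when $\lambda=\lambda_v$, and of $A(G)\setminus\{-v\}$ when $\lambda=\lambda_{-v}$, placing $p$ in $\Pc_e \cup (-\Pc_e)$.

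The intersection equality is the main point. The inclusion $\Ac_{G\setminus e}\subseteq\Pc_e\cap(-\Pc_e)$ is immediate. For the reverse, I check each facet-defining inequality of $\Ac_{G\setminus e}$ in $H$: such an inequality has the form $\ell(x)\le c$ with $\ell(w)\le c$ for every $w\in A(G)\setminus\{\pm v\}$, and $c>0$ because the origin lies in the relative interior. The decisive observation is that $\ell(v)+\ell(-v) = \ell(0) = 0 < 2c$, so at most one of $\ell(v),\ell(-v)$ can exceed $c$. If $\ell(-v)\le c$, then $\ell\le c$ at every vertex of $\Pc_e$, hence on $\Pc_e$; if $\ell(v)\le c$, then symmetrically $\ell\le c$ on $-\Pc_e$. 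Either way $\ell\le c$ on $\Pc_e\cap(-\Pc_e)$, and running through all facets gives $\Pc_e\cap(-\Pc_e)\subseteq\Ac_{G\setminus e}$.

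Once the two set identities are in hand, the inclusion--exclusion $|A\cap\ZZ^d|+|B\cap\ZZ^d| = |(A\cup B)\cap\ZZ^d|+|(A\cap B)\cap\ZZ^d|$ applied with $A = m\Pc_e$, $B = -m\Pc_e$, combined with $L_{-\Pc_e}(m)=L_{\Pc_e}(m)$, yields
\[
2\,L_{\Pc_e}(m) \;=\; L_{\Ac_G}(m) + L_{\Ac_{G\setminus e}}(m).
\]
All three polytopes have dimension $d-1$, so clearing the common denominator $(1-x)^d$ in the Ehrhart series gives the claimed $h^*$-polynomial identity. The main obstacle is the intersection equality, and the whole argument turns on the positivity $c>0$ of facet constants of $\Ac_{G\setminus e}$---a consequence of central symmetry---which rules out the only problematic case $\ell(v),\ell(-v) > c$.
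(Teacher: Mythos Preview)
Your argument is correct and essentially matches the paper's approach: both rest on the identities $\Pc_e \cup (-\Pc_e) = \Ac_G$ and $\Pc_e \cap (-\Pc_e) = \Ac_{G\setminus e}$, followed by a lattice-point count (inclusion--exclusion for you, an equivalent disjoint-union decomposition $\Ac_G = \Ac_{G\setminus e} \sqcup (\Pc_e \setminus \Ac_{G\setminus e}) \sqcup (-\Pc_e \setminus \Ac_{G\setminus e})$ in the paper). The paper simply asserts this decomposition without justification, whereas you supply detailed proofs of both set identities via the convex-combination rewriting and the facet-inequality argument.
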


\begin{proof}
Note that $\Ac_{G \setminus e} \subset \Pc_e \subset \Ac_G$.
Since $G$ is connected and $e$ is not a bridge of $G$,
the dimension of each of $\Ac_G$ and $\Ac_{G \setminus e}$ is $d-1$.
Let $\Pc_e'$ denote the convex hull of $A(G) \setminus \{ - \eb_i + \eb_j   \}$,
which is unimodularly equivalent to $\Pc_e$.
Then $\Ac_G$ and $\Pc_e$ are decomposed into the following disjoint union:
\begin{eqnarray*}
\Ac_G &=& \Ac_{G \setminus e} \cup (\Pc_e \setminus \Ac_{G \setminus e}) 
 \cup (\Pc_e' \setminus \Ac_{G \setminus e}),\\
\Pc_e &=& \Ac_{G \setminus e} \cup (\Pc_e \setminus \Ac_{G \setminus e}) .
\end{eqnarray*}
Since $\Pc_e \setminus \Ac_{G \setminus e}$ is unimodularly equivalent to 
$\Pc_e' \setminus \Ac_{G \setminus e}$, we have a desired conclusion.
\end{proof}

The $h^*$-polynomials of $V_{2m}$ and $\widetilde{V}_{2m}$ are as follows:

\begin{Proposition}
\label{cycle h}
Let $C_d$ denote a cycle of length $d \ge 3$ and let $1 \le m \in \ZZ$.
Then we have
\begin{eqnarray*}
h^*(\Ac_{C_d},x) &=& \sum_{i=0}^{\lfloor \frac{d-1}{2} \rfloor}
\binom{2i}{i} x^i (x+1)^{d-2i-1},\\
h^*(V_{2m},x) &=& \sum_{i=0}^m
\binom{2i}{i} x^i (x+1)^{2m-2i},\\
h^*(\widetilde{V}_{2m},x) &=& (x+1)^{2m}
+ \sum_{i=1}^m
\binom{2i-1}{i-1} x^i (x+1)^{2m-2i}.
\end{eqnarray*}
In particular, the $h^*$-polynomials of $\Ac_{C_d}$, $V_{2m}$ and $\widetilde{V}_{2m}$ are $\gamma$-positive.
\end{Proposition}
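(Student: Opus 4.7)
The three formulas have a cascading structure, so I would treat them in order.

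For $h^{*}(\Ac_{C_d},x)$, the claimed $\gamma$-expansion is exactly the result recalled just before the statement: Conway--Sloane conjectured the $\gamma$-polynomial in \cite{CS}, and Ohsugi--Shibata \cite{OhsugiShibata} and Wang--Yu \cite{WY} independently established the closed form. I would simply cite them, since reproving this identity would take us far afield and is not the main point of the proposition.

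For the del Pezzo polytope, I would use the observation recalled right before the statement that $V_{2m}$ is unimodularly equivalent to $\Ac_{C_{2m+1}}$ (\cite{Higashi}). An explicit equivalence is the linear map $\eb_k \mapsto \eb_k-\eb_{k+1}$ ($k=1,\dots,2m$), which sends the distinguished vertex $\eb_1+\cdots+\eb_{2m}$ to $\eb_1-\eb_{2m+1}$. Specializing the cycle formula at $d=2m+1$ gives the asserted expression for $h^{*}(V_{2m},x)$ immediately.

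The pseudo-del Pezzo case is where the actual work is. Under the equivalence above, $\widetilde{V}_{2m}$ becomes the polytope obtained from $\Ac_{C_{2m+1}}$ by deleting the single vertex $\eb_1-\eb_{2m+1}$; that is precisely the polytope $\Pc_e$ of Lemma~\ref{deleteoneedge} for the edge $e=\{1,2m+1\}$ of $C_{2m+1}$. Since $C_{2m+1}$ is $2$-connected, $e$ is not a bridge, so Lemma~\ref{deleteoneedge} applies and yields
$$
h^{*}(\widetilde{V}_{2m},x) = \tfrac{1}{2}\bigl( h^{*}(\Ac_{C_{2m+1}},x) + h^{*}(\Ac_{C_{2m+1}\setminus e},x) \bigr).
$$
Now $C_{2m+1}\setminus e$ is a path on $2m+1$ vertices, hence a tree, so by Example~\ref{ex:cross} the polytope $\Ac_{C_{2m+1}\setminus e}$ is unimodularly equivalent to the $2m$-dimensional cross polytope and $h^{*}(\Ac_{C_{2m+1}\setminus e},x)=(x+1)^{2m}$. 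Substituting the cycle formula, observing that its $i=0$ term is also $(x+1)^{2m}$, and using the elementary identity $\binom{2i}{i}=2\binom{2i-1}{i-1}$ for $i\ge 1$ produces the stated formula for $h^{*}(\widetilde{V}_{2m},x)$.

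Finally, $\gamma$-positivity is manifest from all three expressions, since each is displayed as $\sum_{i}\gamma_i x^i(x+1)^{n-2i}$ with the dimension $n$ and nonnegative coefficients ($\binom{2i}{i}$ or $\binom{2i-1}{i-1}$, and $1$ for the leading term of $\widetilde{V}_{2m}$). The only mildly delicate step in the whole argument is the identification of $\widetilde{V}_{2m}$ with $\Pc_e$ under a chosen unimodular equivalence; once the explicit map above is fixed this is routine, and everything else reduces to the cycle formula plus a binomial identity.
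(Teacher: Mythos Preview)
Your treatment of $V_{2m}$ and $\widetilde{V}_{2m}$ is essentially identical to the paper's: the unimodular equivalence $V_{2m}\simeq\Ac_{C_{2m+1}}$ is invoked, and for $\widetilde{V}_{2m}$ the paper applies Lemma~\ref{deleteoneedge} exactly as you do (writing $\Ac_{P_{2m+1}}$ for the path), then simplifies using $(x+1)^{2m}$ and $\frac{1}{2}\binom{2i}{i}=\binom{2i-1}{i-1}$.

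The one point where you diverge is the cycle formula itself. You propose to cite \cite{OhsugiShibata} and \cite{WY} as having ``established the closed form,'' but by the paper's own account those references give coefficient formulas for $h^*(\Ac_{C_d},x)$, while the displayed $\gamma$-expansion was a \emph{conjecture} of Conway--Sloane. So a bare citation is not quite enough; one still has to show that the known coefficients agree with $\sum_i\binom{2i}{i}x^i(x+1)^{d-1-2i}$. The paper does this by induction on $d$: for even $d$ it uses the bipartite contraction formula (Proposition~\ref{contraction}) to get $h^*(\Ac_{C_d},x)=(x+1)\,h^*(\Ac_{C_{d-1}},x)$, which immediately yields the claim from the odd case; for odd $d=2m+1$ it observes that $(x+1)\,h^*(\Ac_{C_{d-1}},x)+\binom{2m}{m}x^m$ matches the proposed sum, checks via $\sum_{i=0}^m\binom{2i}{i}\binom{2m-2i}{m-i}=4^m=2^{d-1}$ that the middle coefficient is $2^{d-1}$, and then appeals to the recursion in \cite[Theorem~2.3]{OhsugiShibata} to conclude. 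Your plan becomes complete once you either reproduce this short induction or supply the algebraic verification that the coefficient formulas in \cite{OhsugiShibata,WY} indeed sum to the stated $\gamma$-form.
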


\begin{proof}
The proof for $C_d$ is induction on $d$.
First, we have
$h^*(\Ac_{C_{3}},x) = x^2 + 4x +1 = (x+1)^2 + \binom{2}{1} x.$
If $d \ge 4$ is even, then
$$
h^*(\Ac_{C_{d}},x) =  (x + 1) h^*(\Ac_{C_{d-1}},x)
=\sum_{i=0}^{\frac{d-2}{2}}
\binom{2i}{i} x^i (x+1)^{d-2i-1}
=
\sum_{i=0}^{\lfloor \frac{d-1}{2} \rfloor}
\binom{2i}{i} x^i (x+1)^{d-2i-1}.
$$
Moreover, if $d = 2m +1$ ($2 \le m \in \ZZ$), then
the coefficient of $x^m$ in
$$
\sum_{i=0}^{\frac{d-1}{2}}
\binom{2i}{i} x^i (x+1)^{d-2i-1}
= (x+1) h^*(\Ac_{C_{d-1}},x)
+ \binom{2m}{m} x^m
$$
is 
$
\sum_{i=0}^m \binom{2i}{i} \binom{2m-2i}{m-i}
= 4^m = 2^{d-1}
$
and other coefficient is arising from $(x+1) h^*(\Ac_{C_{d-1}},x)$.
By a recursive formula in \cite[Theorem~2.3]{OhsugiShibata}, we have
$$h^*(\Ac_{C_d},x)= \sum_{i=0}^{\frac{d-1}{2}}
\binom{2i}{i} x^i (x+1)^{d-2i-1}.$$
Since $V_{2m}$ is unimodularly equivalent to $\Ac_{C_{2m+1}}$,
we have $h^*(V_{2m},x) = h^*(\Ac_{C_{2m+1}},x)$.
By Lemma~\ref{deleteoneedge}, it follows that
\begin{eqnarray*}
h^*(\widetilde{V}_{2m},x) 
&= &
\frac{1}{2} ( h^*(\Ac_{C_{2m+1}},x) +  h^*(\Ac_{P_{2m+1}},x) )\\
&=&
\frac{1}{2} \left( \sum_{i=0}^m
\binom{2i}{i} x^i (x+1)^{2m-2i} +  (x+1)^{2m} \right)\\
&=&(x+1)^{2m}
+ \sum_{i=1}^m
\binom{2i-1}{i-1} x^i (x+1)^{2m-2i}.
\end{eqnarray*}
\end{proof}

Thus it turns out that any pseudo-symmetric simplicial reflexive polytope
is a free sum of reflexive polytopes whose $h^*$-polynomial are $\gamma$-positive.
By \cite[Theorem 1]{Braun}, we have the following.

\begin{Theorem}
	\label{thm:pseudo-symmetric}
The $h^*$-polynomial of any pseudo-symmetric simplicial reflexive polytope is $\gamma$-positive.
\end{Theorem}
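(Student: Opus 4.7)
The plan is to combine Nill's structure theorem for pseudo-symmetric simplicial reflexive polytopes with the explicit formulas in Proposition~\ref{cycle h} and a multiplicativity property of $\gamma$-positivity under free sums.

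First, I would invoke the classification recalled just before the statement: every pseudo-symmetric simplicial reflexive polytope $\Pc$ decomposes as a free sum
\[
\Pc \;\simeq\; \Pc_1 \oplus \cdots \oplus \Pc_s,
\]
where each $\Pc_i$ is either a cross polytope, a del Pezzo polytope $V_{2m_i}$, or a pseudo-del Pezzo polytope $\widetilde{V}_{2m_i}$. By \cite[Theorem~1]{Braun}, the $h^*$-polynomial of a free sum of reflexive polytopes is multiplicative, so
\[
h^*(\Pc, x) \;=\; \prod_{i=1}^{s} h^*(\Pc_i, x).
\]

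Second, I would check that each factor on the right is $\gamma$-positive. The $d$-dimensional cross polytope has $h^*$-polynomial $(x+1)^d$, which is trivially $\gamma$-positive with $\gamma$-polynomial equal to $1$. For the remaining two families, Proposition~\ref{cycle h} provides explicit $\gamma$-expansions: indeed, the displayed formulas for $h^*(V_{2m},x)$ and $h^*(\widetilde{V}_{2m},x)$ are already written as nonnegative combinations of $x^i(x+1)^{2m-2i}$, which is the defining form of $\gamma$-positivity.

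Finally, I would appeal to the (standard) fact that a product of palindromic $\gamma$-positive polynomials is again $\gamma$-positive. Concretely, if $f(x) = \sum_i \alpha_i\, x^i (1+x)^{n-2i}$ and $g(x) = \sum_j \beta_j\, x^j (1+x)^{m-2j}$ with $\alpha_i,\beta_j \ge 0$, then
\[
f(x) g(x) \;=\; \sum_{k \ge 0} \Bigl(\sum_{i+j=k} \alpha_i \beta_j\Bigr) x^{k} (1+x)^{(n+m) - 2k},
\]
whose coefficients are manifestly nonnegative. Applied iteratively to the factorization above (with $n = \dim \Pc_i$), this proves $\gamma$-positivity of $h^*(\Pc,x)$.

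The only place any real work is needed is in Proposition~\ref{cycle h}, which is handled separately; once those $\gamma$-expansions are in hand the argument here is essentially bookkeeping. There is no serious obstacle — the main thing to be careful about is that the degrees of the palindromic factors match the dimensions of the summands so that the product formula for $\gamma$-positive polynomials applies without shift.
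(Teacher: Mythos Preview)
Your proposal is correct and follows essentially the same route as the paper: Nill's free-sum decomposition, Braun's multiplicativity of $h^*$ under free sums of reflexive polytopes, and Proposition~\ref{cycle h} (together with Example~\ref{ex:cross} for cross polytopes) for the $\gamma$-positivity of each factor. The only addition is that you spell out why a product of $\gamma$-positive polynomials is $\gamma$-positive, a step the paper leaves implicit.
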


\begin{proof}
From results by Nill \cite{BNill}, any pseudo-symmetric simplicial reflexive 
polytope is a free sum of cross polytopes, del Pezzo polytopes and 
pseudo-del Pezzo polytopes. On the other hand, by \cite[Theorem 1]{Braun},
the $h^*$-polynomial of a free sum of reflexive polytopes 
$\Pc_1,\ldots,\Pc_s$ is equal to the product of their $h^*$-polynomials 
of $\Pc_1,\ldots, \Pc_s$. Hence by Example \ref{ex:cross} and Proposition~\ref{cycle h},
it follows that the $h^*$-polynomial of any pseudo symmetric 
simplicial reflexive polytope is $\gamma$-positive.
\end{proof}

\subsection{Classes of graphs such that $h^*({\mathcal A}_G, x)$ is $\gamma$-positive}

Using results in the present section, for example, 
$h^*({\mathcal A}_G, x)$ is $\gamma$-positive
if one of the following holds:
\begin{itemize}
\item
$G = \widehat{H}$ for some graph $H$ (e.g., $G$ is a complete graph, a wheel graph);
\item
$G = \widetilde{H}$ for some bipartite graph $H$ (e.g., $G$ is a complete bipartite graph);
\item
$G$ is a cycle;
\item
$G$ is an outerplanar bipartite graph.
\end{itemize}
Moreover, we can compute $h^*({\mathcal A}_G, x)$ explicitly in some cases.
We give examples of such calculations for known formulas 
(for complete graphs \cite{Ardila}, and for complete bipartite graphs \cite{HJMsymmetric}).

\begin{Example}[\cite{Ardila}]
By Theorem \ref{thm:Asuspension}, we have
$$h^*({\mathcal A}_{K_d}, x) = h^*({\mathcal A}_{\widehat{K_{d-1}}}, x)
=\frac{(x+1)^{d-1} }{2^{d-2}} \sum_{H \in {\rm Cut}(K_{d-1})} I_{\widetilde{H}} \left(\frac{4x}{(x+1)^2}\right)
.$$
If the edge set of $H \in {\rm Cut}(K_{d-1})$ is $E_S$ with $S \subset [d-1]$,
then $H$ is a complete bipartite graph $K_{|S|, d-1-|S|}$
and $I_{\widetilde{H}}(x) = \sum_{i\ge0} \binom{|S|}{i} \binom{d-|S|-1}{i} x^i$.
(Here $K_{0,d-1}$ denotes an empty graph.)
It then follows that
\begin{eqnarray*}
 h^*({\mathcal A}_{K_d}, x)
 &=& \frac{1}{2^{d-1}} 
\sum_{k=0}^{d-1}
\binom{d-1}{k} 
\sum_{i = 0}^{\lfloor \frac{d-1}{2} \rfloor} 4^i \binom{k}{i} \binom{d-k-1}{i} x^i (x+1)^{d-1-2i} 
\\
&=&
\frac{1}{2^{d-1}} 
\sum_{i = 0}^{\lfloor \frac{d-1}{2} \rfloor}
4^i x^i (x+1)^{d-1-2i} 
\sum_{k=i}^{d-i-1}
\binom{d-1}{k} 
\binom{k}{i} \binom{d-k-1}{i}\\
&=&
\frac{1}{2^{d-1}} 
\sum_{i = 0}^{\lfloor \frac{d-1}{2} \rfloor}
4^i x^i (x+1)^{d-1-2i} 
\sum_{k=i}^{d-i-1}
\binom{d-1}{2i}
\binom{2i}{i}
\binom{d-1-2i}{k-i}\\
&=&
\frac{1}{2^{d-1}} 
\sum_{i = 0}^{\lfloor \frac{d-1}{2} \rfloor}
4^i x^i (x+1)^{d-1-2i} 
\left(2^{d-1-2i} 
\binom{d-1}{2i}
\binom{2i}{i}\right)\\
&=&
\sum_{i = 0}^{\lfloor \frac{d-1}{2} \rfloor}
\binom{d-1}{2i}
\binom{2i}{i}
x^i (x+1)^{d-1-2i} .
\end{eqnarray*}

\end{Example}

\begin{Example}[\cite{HJMsymmetric}]
Let $G=K_{m,n}$. Then $\widetilde{G} = K_{m+1,n+1}$ and 
$$
h^*({\mathcal A}_{K_{m+1,n+1}}, x)
= (x+1) h^*({\mathcal A}_{\widehat{K_{m,n}}}, x)
=\frac{(x+1)^{m+n+1} }{2^{m+n-1}} \sum_{H \in {\rm Cut}(K_{m,n})} I_{\widetilde{H}} \left(\frac{4x}{(x+1)^2}\right).
$$
Let $V_1 \cup V_2$ be the partition of the vertex set of $K_{m,n}$, where $|V_1|=m$ and $|V_2|=n$.
If the edge set of  $H \in {\rm Cut}(K_{m,n})$ is $E_S$ with $S \subset [m+n]$,
then $H$ is the disjoint union of two complete bipartite graphs
$K_{k, \ell}$ and $K_{m-k,n-\ell}$,
and hence
$$I_{\widetilde{H}}(x) =
\left( \sum_{i\ge0} \binom{k}{i} \binom{\ell}{i} x^i \right)
\left( \sum_{j\ge0} \binom{m-k}{j} \binom{n-\ell}{j} x^j \right),
$$
where $k=|V_1 \cap S|$ and $\ell = n- |V_2 \cap S|$.
It then follows that
\begin{eqnarray*}
& &
h^*({\mathcal A}_{K_{m+1,n+1}}, x)\\
&=& \frac{x+1}{2^{m+n}} 
\sum_{k=0}^m
\sum_{\ell=0}^n
\binom{m}{k} \binom{n}{\ell}
\left(\sum_{i=0}^{\min(k,\ell)} 4^i \binom{k}{i} \binom{\ell}{i} x^i (x+1)^{k+\ell-2i} \right)
\\
& & \hspace{3cm}
\left(\sum_{j=0}^{\min(m-k,n-\ell)} 4^j \binom{m-k}{j} \binom{n-\ell}{j} x^j (x+1)^{m+n-k-\ell-2j} \right)
\\
&=&
\frac{1}{2^{m+n}} 
\sum_{i,j\ge0}
4^{i+j} 
x^{i+j} (x+1)^{n+m-2(i+j)+1} 
\sum_{k=i}^{m-j}
\binom{m}{k}
\binom{k}{i}
\binom{m-k}{j}
\sum_{\ell=i}^{n-j}
 \binom{n}{\ell}
\binom{\ell}{i}
 \binom{n-\ell}{j}.
\end{eqnarray*}
Since 
$$
\sum_{k=i}^{m-j}
\binom{m}{k}
\binom{k}{i}
\binom{m-k}{j}
=
\sum_{k=i}^{m-j}
\binom{m}{i+j}
\binom{i+j}{i}
\binom{m-(i+j)}{k-i}
=
2^{m-(i+j)} 
\binom{m}{i+j}
\binom{i+j}{i},
$$
we have
\begin{eqnarray*}
h^*({\mathcal A}_{K_{m+1,n+1}}, x) &=&
\sum_{i\ge0}
\sum_{j\ge0}
\binom{i+j}{i}^2
 \binom{m}{i+j} \binom{n}{i+j} x^{i+j }(x+1)^{m+n-2(i+j)+1} \\
& = & 
\sum_{\alpha =0}^{\min(m, n)}
\sum_{i=0}^\alpha
\binom{\alpha}{i}^2
 \binom{m}{\alpha} \binom{n}{\alpha} x^\alpha(x+1)^{m+n-2\alpha+1} \\
& = & 
\sum_{\alpha =0}^{\min(m, n)}
\binom{2 \alpha}{\alpha}
 \binom{m}{\alpha} \binom{n}{\alpha} x^\alpha(x+1)^{m+n-2\alpha+1}.
\end{eqnarray*}

\end{Example}

Finally, we conjecture the following:
\begin{Conjecture}
	The $h^*$-polynomial of any symmetric edge polytope of type A is $\gamma$-positive.
\end{Conjecture}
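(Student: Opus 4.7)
The plan is to attempt the conjecture by induction on the number of edges, combining the structural reductions developed in this section. First, Proposition~\ref{2connected} reduces the problem to the case that $G$ is $2$-connected, since the product of palindromic $\gamma$-positive polynomials is again $\gamma$-positive, so the factorization $h^*(\Ac_G, x) = \prod_i h^*(\Ac_{G_i}, x)$ over $2$-connected components immediately transfers $\gamma$-positivity from the pieces to the whole.

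Second, I would attempt to extend the contraction identity of Proposition~\ref{contraction} beyond the bipartite setting. Concretely, the goal would be to prove that for every $2$-connected graph $G$ and every edge $e$ whose contraction does not create parallel edges,
\[
h^*(\Ac_G, x) = (x+1)\, h^*(\Ac_{G/e}, x).
\]
The natural route is to revisit the Gr\"obner-basis argument of \cite{HJMsymmetric}: in the bipartite case the basis $\Gc(G)$ consists only of binomials of types (\ref{even}) and (\ref{pmzero}), but in general the odd-cycle binomials (\ref{odd}) must also be accounted for. The aim would be to show that the sets of initial monomials of $\Gc(G)$ and of $\Gc(G')$ (with $G'$ obtained from $G/e$ by adjoining a pendant edge $e_1'$) coincide under the obvious bijection between $E(G)$ and $E(G')$, yielding $h^*(\Ac_G, x) = h^*(\Ac_{G'}, x) = (x+1)\, h^*(\Ac_{G/e}, x)$ by the same comparison of $K$-bases of $\Sc/\mathrm{in}_<(I_{\Ac_G})$. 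Combined with the base cases of trees (Example~\ref{ex:cross}) and cycles (Proposition~\ref{cycle h}), this would close the induction: each contraction peels off a factor $(x+1)$, which is itself palindromic and $\gamma$-positive, and eventually terminates at a cycle or tree whose $h^*$-polynomial is $\gamma$-positive by the results of this section.

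The main obstacle is precisely this extension: when $e$ lies in odd cycles of $G$, contraction destroys or merges odd cycles into even ones (or into shorter odd cycles via parallel-edge collapse), so that the binomials of type (\ref{odd}) in $\Gc(G)$ and $\Gc(G')$ are no longer in an obvious initial-monomial-preserving bijection. A contingency plan is to apply Lemma~\ref{deleteoneedge} in the form $h^*(\Ac_G, x) = 2\, h^*(\Pc_e, x) - h^*(\Ac_{G\setminus e}, x)$ and to develop a direct combinatorial model for $h^*(\Pc_e, x)$ as the $h$-polynomial of a flag triangulation of a sphere, so that $\gamma$-positivity could be deduced either from Gal's conjecture (if one can verify it in this specific geometric setting) or from an explicit shelling compatible with the known unimodular triangulation of $\Ac_G$. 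Either route shares the same essential difficulty: $\gamma$-positivity is fragile under linear manipulations, so any deletion--contraction recursion must be engineered to produce only positive $\gamma$-combinations, and it is far from clear that such a recursion exists uniformly across all graphs.
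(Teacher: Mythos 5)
This statement is stated in the paper as an open conjecture; the paper offers no proof of it, only partial evidence (suspensions, cycles, outerplanar bipartite graphs, graphs of the form $\widetilde{H}$, etc.). So there is no proof of the paper's to compare yours against, and your proposal must stand on its own. It does not: the identity on which your entire induction rests is false.

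Concretely, you propose to show $h^*(\Ac_G, x) = (x+1)\, h^*(\Ac_{G/e}, x)$ for $2$-connected non-bipartite $G$. Take $G = C_3$ and contract an edge, so $G/e = C_2 = K_2$ after removing the parallel edge. The paper computes $h^*(\Ac_{C_3}, x) = x^2 + 4x + 1$, while $(x+1)\, h^*(\Ac_{K_2}, x) = (x+1)^2 = x^2 + 2x + 1$; the normalized volumes already disagree ($6$ versus $4$). More generally, the paper's own proof of Proposition~\ref{cycle h} exhibits the recursion $h^*(\Ac_{C_{2m+1}}, x) = (x+1)\, h^*(\Ac_{C_{2m}}, x) + \binom{2m}{m} x^m$, and contracting an edge of $C_{2m+1}$ yields $C_{2m}$, so the contraction formula fails for every odd cycle by exactly the middle $\gamma$-term. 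This is not a technical obstacle in the Gr\"obner basis bookkeeping that more care could remove; the bipartiteness hypothesis in Proposition~\ref{contraction} is essential, because the odd-cycle binomials of type (\ref{odd}) contribute genuinely new initial monomials (degree $k+1$ in the variables $x_i, y_i$ against $z$-degree $1$) that have no counterpart after contraction. Your contingency plan also does not close the gap: Lemma~\ref{deleteoneedge} expresses $h^*(\Ac_G, x)$ as $2h^*(\Pc_e, x) - h^*(\Ac_{G \setminus e}, x)$, and as you yourself observe, a difference of $\gamma$-positive polynomials need not be $\gamma$-positive; and an appeal to Gal's conjecture is an appeal to another open problem. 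The reduction to $2$-connected components via Proposition~\ref{2connected} is the only sound step, and it is also the step the paper already makes.
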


\section{Twinned chain polytopes}
\label{sec:chain}
In this section, we will apply Theorem \ref{hpolypm} to twinned chain polytopes.
For two lattice polytopes $\Pc, \Qc \subset \RR^d$, we set
\[
\Gamma(\Pc, \Qc):={\rm conv}(\Pc \cup (- \Qc)) \subset \RR^d.
\]
Let $P$ and $Q$ be two finite posets on $[d]$.
The \textit{twinned chain polytope} of $P$ and $Q$ is the lattice polytope defined by
\[
\Cc_{P,Q}:=\Gamma(\Cc_P,\Cc_Q).
\]
Then $\Cc_{P,Q}$ is reflexive.
Moreover, $\Cc_{P,Q}$ has a flag, regular unimodular triangulation all of whose maximal simplices contain the origin (\cite[Proposition 1.2]{HMTgamma}).
Hence we obtain the following:
\begin{Corollary}
	Let $P$ and $Q$ be two finite posets on $[d]$.
	Then the $h^*$-polynomial of $\Cc_{P,Q}$ coincides with the $h$-polynomial of a flag triangulation of a sphere.
\end{Corollary}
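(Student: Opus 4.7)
The plan is to combine two facts that are already assembled in the excerpt, so that the corollary follows as an essentially immediate consequence with no genuine new computation.

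First, I would recall from the beginning of Section 5 that, for any two finite posets $P$ and $Q$ on $[d]$, the twinned chain polytope $\Cc_{P,Q} = \Gamma(\Cc_P,\Cc_Q) = \conv(\Cc_P \cup (-\Cc_Q))$ is a reflexive polytope. This is explicitly stated just above the corollary. In particular, the origin lies in the interior of $\Cc_{P,Q}$, and by Hibi's characterization the $h^*$-polynomial of $\Cc_{P,Q}$ is palindromic of degree $d$.

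Second, I would invoke the cited result \cite[Proposition~1.2]{HMTgamma}, which asserts that $\Cc_{P,Q}$ admits a flag regular unimodular triangulation $\Delta$ all of whose maximal simplices contain the origin. Taking the link of $\mathbf{0}$ in $\Delta$ produces a flag triangulation $\Delta_{\partial}$ of the boundary $\partial \Cc_{P,Q}$: flagness passes to links, and since every maximal simplex of $\Delta$ is the join of $\{\mathbf{0}\}$ with a maximal simplex of $\Delta_{\partial}$, the latter triangulates the whole boundary. Because $\Cc_{P,Q}$ is reflexive of dimension $d$, its boundary is combinatorially a $(d-1)$-sphere, so $\Delta_{\partial}$ is a flag triangulation of a sphere.

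Finally, I would apply the principle recalled in Section~1, due to Bruns--R\"omer \cite{BR}, which relates the Ehrhart series of a reflexive polytope endowed with such a triangulation to the $h$-polynomial of the induced boundary triangulation: one has
\[
h^*(\Cc_{P,Q}, x) = h(\Delta_{\partial}, x).
\]
Together with the previous step this identifies $h^*(\Cc_{P,Q}, x)$ with the $h$-polynomial of a flag triangulation of a sphere, as required. The only point that requires any care, and which I regard as the single non-bookkeeping step, is justifying that the link construction transfers both the flag property and the sphericity from $\Delta$ to $\Delta_{\partial}$; but both are standard and follow from the two cited results, so no further obstacle arises.
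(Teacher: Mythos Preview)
Your proposal is correct and follows exactly the paper's approach: the corollary is stated immediately after recalling that $\Cc_{P,Q}$ is reflexive and admits a flag regular unimodular triangulation all of whose maximal simplices contain the origin (\cite[Proposition~1.2]{HMTgamma}), and the paper simply applies the Bruns--R\"omer principle already recorded in Section~1. Your additional explanation of the link-of-the-origin construction just unpacks the mechanism behind that principle and is entirely in line with the intended argument.
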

In {\cite[Proposition 2.2]{twineedchainpolytopes}} it was shown that $\Cc_{P,Q}$ is locally anti-blocking.
In general, for two finite posets $(P, <_P)$ and $(Q,<_Q)$ with $P \cap Q = \emptyset$, the \textit{ordinal sum} of $P$ and $Q$ is the  poset $(P \oplus Q, <_{P \oplus Q})$ on $P \oplus Q= P \cup Q$ such that $i <_{P \oplus Q} j$ if and only if (a) $i,j \in P$ and $i <_P j$, or (b) $i,j \in Q$ and $i <_Q j$, or (c) $i \in P$ and $j \in Q$.
Given a subset $I$ of $[d]$, we define the \textit{induced subposet} of $P$ on $I$ to be the finite poset $(P_I,<_{P_I})$ on $I$ such that $i <_{P_I} j$ if and only if $i <_P j$.
For $I \subset [d]$, let $\overline{I}:=[d] \setminus I$.
\begin{Proposition}[{\cite[Proposition 2.2]{twineedchainpolytopes}}]
	Let $P$ and $Q$ be two finite posets on $[d]$.
	Then for each $\varepsilon \in \{-1,1\}^d$, it follows that
	\[
	\Cc_{P,Q} \cap \RR^d_{\varepsilon}=\Cc^{\pm}_{P_{I_\varepsilon} \oplus Q_{\overline{I_{\varepsilon}}}} \cap \RR^d_{\varepsilon},
	\]
	where $I_{\varepsilon}=\{i \in [d] : \varepsilon_i=1 \}$.
\end{Proposition}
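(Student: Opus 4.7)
The plan is to reformulate both sides of the asserted identity as a single orthant slice. Setting $R := P_{I_\varepsilon} \oplus Q_{\overline{I_\varepsilon}}$, the chain polytope $\Cc_R \subset \RR^d_{\geq 0}$ is anti-blocking, and a short check shows $\Cc_R^{\pm} \cap \RR^d_\varepsilon = \varepsilon \Cc_R$: indeed, any representative $\varepsilon' x$ of a point in $\RR^d_\varepsilon$, with $x \in \Cc_R \subset \RR^d_{\ge 0}$, must satisfy $\varepsilon'_i = \varepsilon_i$ whenever $x_i > 0$, so the point agrees with $\varepsilon x$. Thus the claim reduces to showing $\Cc_{P,Q} \cap \RR^d_\varepsilon = \varepsilon \Cc_R$.

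For the inclusion $\varepsilon \Cc_R \subseteq \Cc_{P,Q} \cap \RR^d_\varepsilon$, I would use the structure of the ordinal sum: in $R$, every element of $P_{I_\varepsilon}$ lies below every element of $Q_{\overline{I_\varepsilon}}$, so every antichain of $R$ is either an antichain $A \in \Ac(P)$ contained in $I_\varepsilon$ or an antichain $B \in \Ac(Q)$ contained in $\overline{I_\varepsilon}$. The corresponding vertices of $\varepsilon \Cc_R$ are $\varepsilon \eb_A = \eb_A \in \Cc_P$ and $\varepsilon \eb_B = -\eb_B \in -\Cc_Q$, both of which lie in $\Cc_{P,Q} \cap \RR^d_\varepsilon$, and taking convex hulls finishes this direction.

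The real work is the reverse inclusion. Given $x \in \Cc_{P,Q} \cap \RR^d_\varepsilon$, I would write $x = y' - z'$ with $y' := \lambda u$ and $z' := (1-\lambda) v$, where $u \in \Cc_P$, $v \in \Cc_Q$ and $\lambda \in [0,1]$; this uses the standard fact that the convex hull of two convex sets equals the set of convex combinations of one point from each. Then $y', z' \geq 0$, and setting $w := \varepsilon x$ gives $w \geq 0$ since $x \in \RR^d_\varepsilon$. To conclude $w \in \Cc_R$ (so that $x = \varepsilon w \in \varepsilon \Cc_R$), I will invoke Stanley's chain-inequality description of the chain polytope, which reduces the check to verifying $\sum_{i \in C} w_i \leq 1$ for every chain $C$ of $R$.

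The central observation is that every chain $C$ of the ordinal sum $R$ splits uniquely as $C = C_P \sqcup C_Q$ with $C_P \subseteq I_\varepsilon$ a chain of $P$ and $C_Q \subseteq \overline{I_\varepsilon}$ a chain of $Q$. From $x_p = y'_p - z'_p \leq y'_p$ for $p \in I_\varepsilon$ and $-x_q = z'_q - y'_q \leq z'_q$ for $q \in \overline{I_\varepsilon}$, I obtain
\[
\sum_{i \in C} w_i \;\leq\; \sum_{p \in C_P} y'_p + \sum_{q \in C_Q} z'_q \;\leq\; \lambda + (1-\lambda) \;=\; 1,
\]
where the second inequality applies $u \in \Cc_P$ to the chain $C_P$ and $v \in \Cc_Q$ to the chain $C_Q$. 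The main obstacle, and the reason the ordinal sum appears, is precisely this treatment of mixed chains: the independent $P$- and $Q$-chain bounds must combine into a single inequality, which they do cleanly because the convex-combination weights $\lambda$ and $1-\lambda$ sum to $1$.
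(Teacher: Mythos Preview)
The paper does not supply its own proof of this proposition; it is quoted verbatim as \cite[Proposition~2.2]{twineedchainpolytopes} and used as a black box. Consequently there is no in-paper argument to compare against.

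That said, your proof is correct and complete. The reduction $\Cc_R^{\pm}\cap\RR^d_\varepsilon=\varepsilon\,\Cc_R$ is the standard observation for anti-blocking polytopes, and both inclusions of $\Cc_{P,Q}\cap\RR^d_\varepsilon=\varepsilon\,\Cc_R$ are handled cleanly. The forward inclusion via antichains of the ordinal sum is routine. For the reverse inclusion, your decomposition $x=\lambda u-(1-\lambda)v$ with $u\in\Cc_P$, $v\in\Cc_Q$ is exactly the right move, and the chain-inequality bound
\[
\sum_{i\in C} w_i \le \lambda\sum_{p\in C_P}u_p + (1-\lambda)\sum_{q\in C_Q}v_q \le \lambda+(1-\lambda)=1
\]
goes through because $C_P$ is a chain of the induced subposet $P_{I_\varepsilon}$ and hence of $P$ (and likewise for $C_Q$ and $Q$), so Stanley's half-space description of $\Cc_P$ and $\Cc_Q$ applies. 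The nonnegativity $w\ge 0$ comes from $x\in\RR^d_\varepsilon$, and the singleton chains give $w_i\le 1$, so $w\in\Cc_R$ as needed. Nothing is missing.
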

From this result, Theorem \ref{hpolypm} and Proposition \ref{prop:enrichedchainhpoly} we obtain the following:
\begin{Theorem}
	\label{thm:twinnedhpoly}
	Let $P$ and $Q$ be two finite posets on $[d]$.
	Then one has
	\[
	h^*(\Cc_{P,Q}, x)
	=  \frac{1}{2^{d}}\sum_{\varepsilon \in \{-1,1\}^d}
h^*(\Cc^{(e)}_{R_{\varepsilon}},x)=(x+1)^d f_{P,Q}\left( \dfrac{4x}{(x+1)^2} \right),
	\]
	where $I_{\varepsilon}=\{i \in [d] : \varepsilon_i=1 \}$ and $R_{\varepsilon}$ is a naturally labeled poset which is obtained from $P_{I_\varepsilon} \oplus Q_{\overline{I_{\varepsilon}}}$ by reordering the label and 
	\[
	f_{P,Q}(x)=\frac{1}{2^{d}}\sum_{\varepsilon \in \{-1,1\}^d}
	W^{(\ell)}_{R_{\varepsilon}}(x)
	\]
	In particular, $h^*(\Cc_{P,Q},x)$ is $\gamma$-positive.
	Moreover, $h^*(\Cc_{P,Q},x)$ is real-rooted if and only if $f_{P,Q}(x)$ is real-rooted.
\end{Theorem}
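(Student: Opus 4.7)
The plan is to apply Theorem~\ref{hpolypm} directly to $\Cc_{P,Q}$, using the local anti-blocking decomposition recalled just above from \cite{twineedchainpolytopes}. That proposition identifies, for each sign vector $\varepsilon \in \{-1,1\}^d$, the slice $\Cc_{P,Q} \cap \RR^d_\varepsilon$ with $\Cc^{\pm}_{P_{I_\varepsilon} \oplus Q_{\overline{I_\varepsilon}}} \cap \RR^d_\varepsilon$. Since by definition $\Cc^{(e)}_R = \Cc^{\pm}_R$ for any finite poset $R$, Theorem~\ref{hpolypm} immediately yields
\[
h^*(\Cc_{P,Q}, x) \;=\; \frac{1}{2^d} \sum_{\varepsilon \in \{-1,1\}^d} h^*\!\bigl(\Cc^{(e)}_{P_{I_\varepsilon} \oplus Q_{\overline{I_\varepsilon}}},\, x\bigr).
\]

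Next I would invoke the remark immediately following Proposition~\ref{prop:enrichedchainhpoly}: reordering the labels of a finite poset produces a unimodularly equivalent enriched chain polytope, so $h^*(\Cc^{(e)}_{P_{I_\varepsilon} \oplus Q_{\overline{I_\varepsilon}}}, x) = h^*(\Cc^{(e)}_{R_\varepsilon}, x)$ for the natural labeling $R_\varepsilon$ introduced in the statement. This establishes the first equality. For the second, since each $R_\varepsilon$ is naturally labeled on $[d]$, Proposition~\ref{prop:enrichedchainhpoly} gives
\[
h^*\!\bigl(\Cc^{(e)}_{R_\varepsilon}, x\bigr) \;=\; (x+1)^d\, W^{(\ell)}_{R_\varepsilon}\!\left(\frac{4x}{(x+1)^2}\right).
\]
Averaging over $\varepsilon \in \{-1,1\}^d$ and pulling out the common factor $(x+1)^d$ gives the claimed expression $h^*(\Cc_{P,Q}, x) = (x+1)^d f_{P,Q}\!\left(\frac{4x}{(x+1)^2}\right)$.

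Finally, each left peak polynomial $W^{(\ell)}_{R_\varepsilon}(x)$ has nonnegative coefficients as a generating function over linear extensions by left-peak statistic, so $f_{P,Q}(x)$ is a polynomial with nonnegative coefficients of degree at most $\lfloor d/2 \rfloor$. This exhibits $h^*(\Cc_{P,Q},x)$ as a nonnegative combination of the polynomials $x^i(1+x)^{d-2i}$, certifying $\gamma$-positivity and identifying $f_{P,Q}(x)$ as the $\gamma$-polynomial. The real-rootedness equivalence is then the general fact recalled in Section~1 that a $\gamma$-positive polynomial is real-rooted if and only if its $\gamma$-polynomial is.

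There is no genuine obstacle: the theorem is an essentially formal composition of Theorem~\ref{hpolypm}, the local anti-blocking decomposition of $\Cc_{P,Q}$, and Proposition~\ref{prop:enrichedchainhpoly}. The only delicate point is the relabeling step: one must use that $h^*$ depends only on the unimodular equivalence class in order to replace the ordinal sum $P_{I_\varepsilon} \oplus Q_{\overline{I_\varepsilon}}$ (whose labeling is inherited from $P$ and $Q$ and in general is \emph{not} natural) with the naturally labeled poset $R_\varepsilon$, which is necessary before Proposition~\ref{prop:enrichedchainhpoly} can be applied.
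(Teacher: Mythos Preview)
Your proposal is correct and follows exactly the route the paper takes: the paper simply states that the theorem follows from the local anti-blocking decomposition of $\Cc_{P,Q}$ (Proposition~2.2 of \cite{twineedchainpolytopes}), Theorem~\ref{hpolypm}, and Proposition~\ref{prop:enrichedchainhpoly}. Your write-up supplies the details (including the relabeling step and the real-rootedness equivalence via the $\gamma$-polynomial) that the paper leaves implicit.
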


On the other hand, it is known that, from $h^*(\Cc_{P,Q},x)$, we obtain the $h^*$-polynomials of several non-locally anti-blocking lattice polytopes arising from the posets $P$ and $Q$.
The \textit{order polytope} $\Oc_P$ (\cite{twoposetpolytopes}) of $P$ is the $(0,1)$-polytope defined by
\[
\Oc_P:=\{ \xb \in [0,1]^d : x_i \leq x_j \mbox{ if } i <_P j  \}.
\]
Given two lattice polytopes $\Pc, \Qc \subset \RR^d$,
we define
\[
\Pc*\Qc:={\rm conv} ((\Pc \times \{0\}) \cup (\Qc \times \{1\}) ) \subset \RR^{d+1},
\]
which are called the \textit{Cayley sum} of $\Pc$ and $\Qc$,
and define 
\[
\Omega(\Pc,\Qc):={\rm conv} ((\Pc \times \{1\}) \cup (-\Qc \times \{-1\}) ) \subset \RR^{d+1}.
\]
\begin{Proposition}[{\cite[Theorem 1.1]{HMTgamma}}]
	Let $P$ and $Q$ be two finite posets on $[d]$.
	Then
	one has
	\[h^*(\Cc_{P,Q},x)=h^*(\Gamma(\Oc_P,\Cc_Q),x).\]
	Furthermore, if $P$ and $Q$ has a common linear extension, then we obtain
	\[h^*(\Cc_{P,Q},x)=h^*(\Gamma(\Oc_P,\Oc_Q),x).\]
\end{Proposition}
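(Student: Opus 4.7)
The plan is to use Stanley's piecewise linear transfer map $\varphi_P \colon \Oc_P \to \Cc_P$ defined by
\[
\varphi_P(x)_i = x_i - \max\bigl\{x_j : j <_P i\bigr\}
\]
(with the convention $\max\emptyset = 0$). It is well known that $\varphi_P$ is a continuous piecewise linear bijection whose restriction to each chamber of a common triangulation of $\Oc_P$ and $\Cc_P$, indexed by linear extensions of $P$, is a unimodular linear isomorphism; in particular it preserves lattice points in every integer dilate, so that $L_{\Oc_P}(m)=L_{\Cc_P}(m)$ for all $m\ge 0$.

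To prove $h^*(\Cc_{P,Q},x) = h^*(\Gamma(\Oc_P,\Cc_Q),x)$, I would extend $\varphi_P$ to a piecewise linear map $\Phi \colon \RR^d \to \RR^d$ defined coordinatewise by
\[
\Phi(x)_i = x_i - \max\bigl(\{0\} \cup \{x_j : j <_P i\}\bigr).
\]
By construction $\Phi$ restricts to $\varphi_P$ on $\Oc_P \subset \RR^d_{\ge 0}$ and to the identity on $-\Cc_Q \subset \RR^d_{\le 0}$. The key step is to verify that $\Phi$ restricts to a bijection $\Gamma(\Oc_P,\Cc_Q) \to \Gamma(\Cc_P,\Cc_Q) = \Cc_{P,Q}$ whose pieces are unimodular linear maps. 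I would do this orthant-by-orthant: for each $\varepsilon \in \{-1,1\}^d$, the description in Proposition~2.2 of \cite{twineedchainpolytopes} identifies $\Cc_{P,Q} \cap \RR^d_\varepsilon$ with the $\varepsilon$-part of the chain polytope of an ordinal sum, and an analogous description identifies $\Gamma(\Oc_P,\Cc_Q) \cap \RR^d_\varepsilon$ as the polytope obtained from the same ordinal sum by substituting the order polytope in place of the chain polytope on the $P$-coordinates. Inside each orthant, $\Phi$ is precisely Stanley's transfer on the $P$-coordinates paired with the identity on the remaining coordinates, so unimodularity and bijectivity follow from the classical case, yielding $L_{\Gamma(\Oc_P,\Cc_Q)}(m) = L_{\Cc_{P,Q}}(m)$ for all $m \ge 0$ and hence the first identity.

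For the second identity, assuming $P$ and $Q$ admit a common linear extension, I would construct a reflected transfer $\Psi \colon \RR^d \to \RR^d$ by
\[
\Psi(x)_i = x_i - \min\bigl(\{0\} \cup \{x_j : j <_Q i\}\bigr),
\]
so that $\Psi$ is the identity on $\Oc_P$ and agrees with the reflected Stanley transfer $x \mapsto -\varphi_Q(-x)$ on $-\Oc_Q$, and show that $\Psi$ sends $\Gamma(\Oc_P,\Oc_Q)$ piecewise linearly and unimodularly onto $\Gamma(\Oc_P,\Cc_Q)$. The common linear extension supplies a total order refining both $<_P$ and $<_Q$, which guarantees that the chamber structures of $\Phi$ and $\Psi$ fit together consistently across the coordinate hyperplanes; composing $\Psi$ with $\Phi$ then gives the desired equality of Ehrhart series. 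The main obstacle will be verifying unimodularity and bijectivity in the mixed orthants, where the $\max(\{0\}\cup\cdots)$ and $\min(\{0\}\cup\cdots)$ switch between the Stanley formula and the identity: one must rule out that distinct chambers on opposite sides of a coordinate hyperplane get identified or overlap beyond their shared boundary. The orthant decomposition from \cite{twineedchainpolytopes} combined with Stanley's classical chamber description should reduce this to a routine check, and the common linear extension hypothesis is what furnishes the required compatibility in the $\Gamma(\Oc_P,\Oc_Q)$ case.
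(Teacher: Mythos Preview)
The paper does not give its own proof of this proposition; it is quoted from \cite[Theorem~1.1]{HMTgamma}, where the equalities are obtained by comparing squarefree initial ideals of the toric ideals of the polytopes in question, not via transfer maps.

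Your transfer-map idea is natural, but the orthant-by-orthant verification has a genuine gap: $\Gamma(\Oc_P,\Cc_Q)$ is \emph{not} locally anti-blocking (since $\Oc_P$ is not anti-blocking unless $P$ is an antichain), and the ``analogous description'' of $\Gamma(\Oc_P,\Cc_Q)\cap\RR^d_\varepsilon$ that you assert is false in general. Take $P=Q$ to be the chain $1<2$ on $[2]$. Then $\Gamma(\Oc_P,\Cc_Q)=\mathrm{conv}\{(0,1),(1,1),(-1,0),(0,-1)\}$, and $\Gamma(\Oc_P,\Cc_Q)\cap\RR^2_{\ge 0}$ is the quadrilateral with vertices $(0,0),(\tfrac12,0),(1,1),(0,1)$, strictly larger than $\Oc_P$; the extra vertex $(\tfrac12,0)$ comes from the facet through $(1,1)$ and $(0,-1)$. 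Your map $\Phi$ does not respect the orthant decomposition either: $\Phi(\tfrac12,0)=(\tfrac12,-\tfrac12)\notin\RR^2_{\ge 0}$. In this small example $\Phi$ \emph{does} turn out to be a lattice-preserving bijection $\Gamma(\Oc_P,\Cc_Q)\to\Cc_{P,Q}$, but one sees this only by decomposing along the linearity regions of $\Phi$ (here $\{x_1\ge 0\}$ and $\{x_1\le 0\}$), not along orthants. To carry your argument through in general you would have to describe $\Gamma(\Oc_P,\Cc_Q)$ on each linearity region of $\Phi$---regions determined by which of $0$ or the $x_j$ with $j<_P i$ realizes each maximum---and match these with the corresponding pieces of $\Cc_{P,Q}$. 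Proposition~2.2 of \cite{twineedchainpolytopes} does not provide this, because it relies precisely on the locally anti-blocking structure that $\Gamma(\Oc_P,\Cc_Q)$ lacks. The same objection applies to your treatment of $\Gamma(\Oc_P,\Oc_Q)$ in the second part.
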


\begin{Proposition}[{\cite[Theorem 1.4]{HTomega}}]
	Let $P$ and $Q$ be two finite posets on $[d]$.
	Then
	one has
	\[(1+x)h^*(\Cc_{P,Q},x)=h^*(\Omega(\Oc_P,\Cc_Q),x).\]
	Furthermore, if $P$ and $Q$ has a common linear extension, then we obtain
	\[
	(1+x)h^*(\Cc_{P,Q},x)=h^*(\Omega(\Oc_P,\Oc_Q),x).\]
\end{Proposition}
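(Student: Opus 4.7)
The plan is to deduce both identities from a single bipyramidal-type $h^*$-identity, using the previously cited HMT proposition as a bridge. That proposition gives $h^*(\Cc_{P,Q}, x) = h^*(\Gamma(\Oc_P, \Cc_Q), x)$ in general, and $h^*(\Cc_{P,Q}, x) = h^*(\Gamma(\Oc_P, \Oc_Q), x)$ when $P, Q$ share a linear extension. Granting these, both claims reduce to verifying
\[
(1+x)\, h^*\bigl(\Gamma(\Pc, \Qc), x\bigr) = h^*\bigl(\Omega(\Pc, \Qc), x\bigr)
\]
for $(\Pc, \Qc) = (\Oc_P, \Cc_Q)$ and, respectively, $(\Pc, \Qc) = (\Oc_P, \Oc_Q)$ under the common linear extension hypothesis.

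I would establish this identity in Ehrhart form. Multiplying out $(1-x)\,\mathrm{Ehr}(\Omega) = (1+x)\,\mathrm{Ehr}(\Gamma)$ and comparing coefficients, the identity is equivalent to the recursion
\[
L_{\Omega(\Pc, \Qc)}(m) - L_{\Omega(\Pc, \Qc)}(m-1) = L_{\Gamma(\Pc, \Qc)}(m) + L_{\Gamma(\Pc, \Qc)}(m-1), \qquad m \ge 1.
\]
To verify this, I would slice $m\,\Omega(\Pc, \Qc) \subset \RR^{d+1}$ by the integer-height hyperplanes $\{x_{d+1} = k\}$ for $-m \le k \le m$: the slice at height $k$ is the Minkowski sum $\tfrac{m+k}{2}\Pc + \tfrac{m-k}{2}(-\Qc) \subset \RR^d$. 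For the specific polytopes $\Oc_P$ and $\Cc_Q$, Stanley's piecewise linear transfer map $\phi_P\colon \Oc_P \to \Cc_P$ provides a lattice-point preserving bijection which, applied slice by slice, matches these Minkowski-slice counts with lattice-point counts of $m\,\Gamma(\Pc, \Qc)$ and $(m-1)\,\Gamma(\Pc, \Qc)$; aggregating over $k$ produces exactly the recursion above.

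For part (II), the common linear extension hypothesis makes Stanley's transfer $\phi_Q\colon \Oc_Q \to \Cc_Q$ coherent with that for $\Oc_P$ along the shared extension. This coherence promotes the transfer to a piecewise linear, lattice-point preserving bijection between $\Omega(\Oc_P, \Oc_Q)$ and $\Omega(\Oc_P, \Cc_Q)$ (acting as the identity on the face at $x_{d+1} = 1$ and as the transfer on the opposite face, extended linearly over each half), yielding $h^*(\Omega(\Oc_P, \Oc_Q), x) = h^*(\Omega(\Oc_P, \Cc_Q), x)$ and reducing (II) to (I).

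The main obstacle is the Minkowski-sum slice count in the bijective step: in general, $|(\alpha \Pc + \beta \Qc) \cap \ZZ^d|$ is neither polynomial in $(\alpha, \beta)$ nor simply related to the individual Ehrhart polynomials of $\Pc$ and $\Qc$. Tractability here depends essentially on the order-theoretic structure of $\Oc_P$ and $\Cc_Q$: one uses the enriched $(P,Q)$-partition enumeration developed in this paper (Theorem~\ref{thm:enrichedPQpart}) to describe the lattice points of the Minkowski slices combinatorially, and then produces the bijection matching them against dilates of $\Gamma(\Oc_P, \Cc_Q)$.
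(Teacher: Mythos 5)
First, a point of comparison: the paper does not prove this statement at all --- it is imported verbatim as \cite[Theorem 1.4]{HTomega}, so there is no internal proof to measure your argument against. Your reduction of both claims, via the preceding HMT proposition, to the single identity $(1+x)\,h^*(\Gamma(\Pc,\Qc),x)=h^*(\Omega(\Pc,\Qc),x)$ for $(\Pc,\Qc)=(\Oc_P,\Cc_Q)$, and your translation of that identity into the Ehrhart recursion $L_{\Omega}(m)-L_{\Omega}(m-1)=L_{\Gamma}(m)+L_{\Gamma}(m-1)$, are both correct and sensible. The slice description of $m\,\Omega(\Pc,\Qc)$ at height $k$ as $\tfrac{m+k}{2}\Pc+\tfrac{m-k}{2}(-\Qc)$ is also right.

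The gap is that the one step carrying all the content is asserted rather than proved. The identity $(1+x)\,h^*(\Gamma(\Pc,\Qc),x)=h^*(\Omega(\Pc,\Qc),x)$ is \emph{false} for general lattice polytopes: take $d=1$ and $\Pc=\Qc=[0,2]$; then $h^*(\Gamma,x)=1+3x$, so $(1+x)h^*(\Gamma,x)=1+4x+3x^2$, while a direct count gives $h^*(\Omega,x)=1+6x+x^2$. So no amount of ``applying the transfer map slice by slice and aggregating over $k$'' can succeed formally --- you must isolate and use the specific property of $\Oc_P$ and $\Cc_Q$ (in \cite{HTomega} this comes from explicit facet/triangulation data for these poset polytopes) that makes $\sum_{k=-m}^{m}\bigl|\bigl(\tfrac{m+k}{2}\Oc_P-\tfrac{m-k}{2}\Cc_Q\bigr)\cap\ZZ^d\bigr|$ telescope against $L_{\Gamma}(m)+L_{\Gamma}(m-1)$. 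Your appeal to Theorem~\ref{thm:enrichedPQpart} does not fill this: that theorem enumerates lattice points of $\Gamma(\Cc_P,\Cc_Q)$, not of the Minkowski-sum slices of $\Omega(\Oc_P,\Cc_Q)$, and a lattice point of a Minkowski sum $\alpha\Pc+\beta(-\Qc)$ has no canonical decomposition, so the individual transfer maps do not act on it. The same issue undermines your argument for the second claim: the transfer map $\phi_Q$ is a piecewise-linear bijection on $\Oc_Q$, i.e.\ on one facet of $\Omega(\Oc_P,\Oc_Q)$, and ``extending it linearly over each half'' is not a well-defined lattice-point-preserving map on the interior slices. What remains correct and salvageable is the overall architecture (reduce to $\Gamma$ via HMT, then compare $\Omega$ with $\Gamma$ through the height recursion); the missing piece is a genuine combinatorial description of the lattice points of the slices $\alpha\Oc_P-\beta\Cc_Q$, which is exactly what the cited source supplies.
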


\begin{Proposition}[{\cite[Theorem 4.1]{HOTcayley}}]
	Let $P$ and $Q$ be two finite posets on $[d]$.
	Then
	one has
	\[h^*(\Cc_{P,Q},x)=h^*(\Oc_P * \Cc_Q,x).\]
\end{Proposition}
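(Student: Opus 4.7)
The plan is to verify the equivalent Ehrhart-series identity
$$\text{Ehr}_{\Oc_P * \Cc_Q}(x) = \frac{1}{1-x}\,\text{Ehr}_{\Cc_{P,Q}}(x),$$
which, extracted coefficient-wise, becomes $L_{\Oc_P * \Cc_Q}(m) = \sum_{j=0}^{m} L_{\Cc_{P,Q}}(j)$ for every $m \ge 0$. Since $\dim(\Oc_P * \Cc_Q) = d+1$ exceeds $\dim \Cc_{P,Q} = d$ by exactly one, this identity is precisely equivalent to the stated equality of $h^*$-polynomials.

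The first step is the standard Cayley slicing: the dilate $m(\Oc_P * \Cc_Q)$ at height $k \in \{0, 1, \ldots, m\}$ is the Minkowski-sum slice $(m-k)\Oc_P + k\Cc_Q$ embedded in $\RR^d \times \{k\}$, giving
$$L_{\Oc_P * \Cc_Q}(m) = \sum_{k=0}^{m} \bigl|\bigl((m-k)\Oc_P + k\Cc_Q\bigr) \cap \ZZ^d\bigr|.$$
The second step is to expand $\sum_{j=0}^{m} L_{\Cc_{P,Q}}(j)$ using the locally anti-blocking description of $\Cc_{P,Q}$: its intersection with each closed orthant $\RR^d_\varepsilon$ equals the chain-polytope piece $\Cc^{(e)}_{R_\varepsilon} \cap \RR^d_\varepsilon$ for $R_\varepsilon = P_{I_\varepsilon} \oplus Q_{\overline{I_\varepsilon}}$. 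The cumulative lattice-point count of $j\Cc_{P,Q}$ is then assembled orthant-by-orthant, with inclusion-exclusion on the coordinate-hyperplane intersections to avoid double counting; this links the count to the enriched $(P,Q)$-partitions of Theorem~\ref{thm:enrichedPQpart}.

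The third step is to match the two sides via an explicit bijection whose central tool is Stanley's transfer map $\phi \colon \Oc_P \to \Cc_P$, a piecewise-linear lattice-preserving bijection. The Cayley slicing parameter $k$ is interpreted on the $\Cc_{P,Q}$-side as simultaneously encoding a dilation factor $j \le m$ and a sign pattern $\varepsilon \in \{-1,1\}^d$ specifying an orthant, while $\phi$ converts the order-polytope component of a Minkowski-sum lattice point into the chain-polytope datum required by the orthant decomposition of $\Cc_{P,Q}$.

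The main technical obstacle is that the Minkowski sum $(m-k)\Oc_P + k\Cc_Q$ is not obviously IDP, even though both its summands are, so a general lattice point of the sum need not split as an integer point of $(m-k)\Oc_P$ plus an integer point of $k\Cc_Q$. The bijection must therefore be carried out piecewise over the canonical simplicial subdivision of $\Oc_P$ on which $\phi$ is unimodular, and the cell-wise matching must be shown to glue together consistently with the orthant decomposition of $\Cc_{P,Q}$. This bookkeeping is the technical heart of the argument, and it is also what explains why the identity holds for $\Oc_P * \Cc_Q$ but not for $\Cc_P * \Cc_Q$: the transfer map exists precisely because one factor is an order polytope and the other a chain polytope.
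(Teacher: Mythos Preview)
The paper does not contain a proof of this proposition; it is quoted verbatim from \cite[Theorem~4.1]{HOTcayley} and used as a black box. So there is no argument in the present paper to compare your attempt against.

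Evaluating your proposal on its own merits: what you have written is an outline, not a proof, and the outline has a genuine gap at the only nontrivial step. Your reductions in steps~1 and~2 are correct and standard: equality of $h^*$-polynomials for polytopes of dimensions $d$ and $d+1$ is equivalent to the cumulative identity $L_{\Oc_P*\Cc_Q}(m)=\sum_{j=0}^m L_{\Cc_{P,Q}}(j)$, and slicing the $m$-th dilate of the Cayley sum by the last coordinate gives $\sum_{k=0}^m |((m-k)\Oc_P+k\Cc_Q)\cap\ZZ^d|$. But step~3, where the actual content lies, is never carried out. You assert that the height parameter $k\in\{0,\dots,m\}$ is to be ``interpreted on the $\Cc_{P,Q}$-side as simultaneously encoding a dilation factor $j\le m$ and a sign pattern $\varepsilon\in\{-1,1\}^d$'', yet a single integer in an $(m+1)$-element set cannot encode a pair drawn from a set of size $(m+1)\cdot 2^d$; as stated this correspondence makes no sense and you do not repair it. Stanley's transfer map $\phi\colon m\Oc_P\cap\ZZ^d\to m\Cc_P\cap\ZZ^d$ is indeed a lattice bijection, but it has no evident compatibility with taking Minkowski sums against a second polytope $k\Cc_Q$, and you offer no mechanism for the ``cell-wise matching'' you allude to. Your final paragraph explicitly flags this as ``the technical heart of the argument'' and then stops.

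In short, you have correctly identified the shape of the identity to be proved and one plausible toolkit (transfer map plus orthant decomposition), but the bijection that would constitute the proof is missing, and the sketch you give of how it should behave is internally inconsistent. The argument in \cite{HOTcayley} proceeds instead through toric algebra (squarefree initial ideals and the resulting unimodular triangulations), not through a direct lattice-point bijection; if you wish to pursue a bijective proof you will need to actually construct the map and, in particular, give a coherent account of how the Cayley height $k$ corresponds to data on the $\Cc_{P,Q}$ side.
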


From these propositions and Theorem \ref{thm:twinnedhpoly}, we obtain the following:
\begin{Corollary}
	Let $P$ and $Q$ be two finite posets on $[d]$.
	Then the $h^*$-polynomials of $\Gamma(\Oc_P,\Cc_Q)$, $\Omega(\Oc_P, \Cc_Q)$,  $\Oc_P*\Cc_Q$  and $\Omega(\Cc_P,\Cc_Q)$ are $\gamma$-positive.
	Furthermore, if $P$ and $Q$ has a common linear extension, then the $h^*$-polynomials of $\Gamma(\Oc_P,\Oc_Q)$ and $\Omega(\Oc_P,\Oc_Q)$ are also $\gamma$-positive.
\end{Corollary}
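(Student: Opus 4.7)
My plan is to reduce every claim in the corollary to the single known fact, supplied by Theorem~\ref{thm:twinnedhpoly}, that $h^*(\Cc_{P,Q},x)$ is $\gamma$-positive. The three propositions immediately preceding the corollary express the $h^*$-polynomials of the six polytopes in question as either $h^*(\Cc_{P,Q},x)$ itself or $(1+x)\cdot h^*(\Cc_{P,Q},x)$. So once I verify that $\gamma$-positivity is preserved under multiplication by $(1+x)$, the conclusion follows by inspection.

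Concretely, I would first record the elementary observation that if
\[
f(x)=\sum_{i\ge 0}\gamma_i\, x^i(1+x)^{d-2i}
\]
is $\gamma$-positive and palindromic of degree $d$, then
\[
(1+x)f(x)=\sum_{i\ge 0}\gamma_i\, x^i(1+x)^{(d+1)-2i}
\]
is $\gamma$-positive and palindromic of degree $d+1$, with the same $\gamma$-polynomial. This is a one-line check that only uses the definition of $\gamma$-positivity. Combined with Theorem~\ref{thm:twinnedhpoly}, it gives that both $h^*(\Cc_{P,Q},x)$ and $(1+x)h^*(\Cc_{P,Q},x)$ are $\gamma$-positive.

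With that in hand, I would then run through the six cases mechanically. For $\Gamma(\Oc_P,\Cc_Q)$ and, under the common-linear-extension hypothesis, $\Gamma(\Oc_P,\Oc_Q)$, the first Proposition gives $h^*=h^*(\Cc_{P,Q},x)$ directly. For $\Oc_P*\Cc_Q$, the third Proposition gives the same identity. For $\Omega(\Oc_P,\Cc_Q)$ and, under the common-linear-extension hypothesis, $\Omega(\Oc_P,\Oc_Q)$, the second Proposition gives $h^*=(1+x)h^*(\Cc_{P,Q},x)$. Finally, for $\Omega(\Cc_P,\Cc_Q)$, I would invoke the $\Cc_P$-in-place-of-$\Oc_P$ version of the $\Omega$-formula (since $\Cc_{P,Q}=\Gamma(\Cc_P,\Cc_Q)$ by definition), yielding again $(1+x)h^*(\Cc_{P,Q},x)$. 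In every case the result is $\gamma$-positive by the preceding paragraph.

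I do not expect any genuine obstacle: the real content is already packaged in Theorem~\ref{thm:twinnedhpoly} and the three identities just before the corollary. The only point requiring even a moment's care is the bookkeeping for $\Omega(\Cc_P,\Cc_Q)$, which is not stated as a separate proposition in the text; here I would either cite a $\Cc_P$-analogue of the $\Omega$-formula or note the equivalent direct argument, observing that $\Omega(\Cc_P,\Cc_Q)$ is a Cayley-type construction over $\Gamma(\Cc_P,\Cc_Q)=\Cc_{P,Q}$ so that its $h^*$-polynomial acquires exactly one factor of $(1+x)$. Apart from this remark, the proof is essentially a bullet list of references.
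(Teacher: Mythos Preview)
Your approach matches the paper's exactly: the corollary is presented there as an immediate consequence of Theorem~\ref{thm:twinnedhpoly} together with the three preceding propositions, with no further argument written out. Your explicit check that multiplication by $(1+x)$ preserves $\gamma$-positivity, and your flag on $\Omega(\Cc_P,\Cc_Q)$ as the one case not literally contained in the quoted identities (the analogous formula $h^*(\Omega(\Cc_P,\Cc_Q),x)=(1+x)\,h^*(\Cc_{P,Q},x)$ is of the same type and is obtained by the same methods as in~\cite{HTomega}), are helpful clarifications that go slightly beyond the paper's terse derivation.
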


In the rest of section, we introduce enriched $(P,Q)$-partitions and we show that the Ehrhart polynomial of $\Cc_{P,Q}$ coincides with a counting polynomial of enriched $(P,Q)$-partitions.
Assume that $P$ and $Q$ are naturally labeled.
We say that a map $f : [d] \to \ZZ$ is an \textit{enriched $(P,Q)$-partition}
if, for all $x, y \in [d]$, $f$ satisfies
\begin{itemize}
	\item $x <_P y$, $f(x) \geq 0$ and $f(y) \geq 0 \Rightarrow f(x) \leq f(y)$; 
	\item $x <_Q y$, $f(x) \leq 0$ and $f(y) \leq 0 \Rightarrow f(x) \geq f(y)$.
\end{itemize}
For a map $f : [d] \to \ZZ$, we set $m(f) = \min \{ \{0\} \cup \{ f(x) : x \in 
[d] \} \}$ and $M(f) = \max \{ \{0\} \cup  f(x) : x \in [d]\} \}$.
For each $0 < m \in \ZZ$, let $\Omega_{P,Q}^{(e)}(m)$ denote the number of enriched $(P,Q)$-partitions $f :[d] \to \ZZ$ with $M(f) - m(f) \leq m$.

\begin{Theorem}
	\label{thm:enrichedPQpart}
	Let $P$ and $Q$ be two finite posets on $[d]$. Then one has 	\[
	L_{\Cc_{P,Q}}(m)=\Omega_{P,Q}^{(e)}(m).
	\]
\end{Theorem}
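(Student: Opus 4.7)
The plan is to construct an explicit bijection between $m\Cc_{P,Q} \cap \ZZ^d$ and the set of enriched $(P,Q)$-partitions $f$ with $M(f)-m(f) \le m$. The natural mechanism, modeled on Stanley's transfer map between the chain polytope and the order polytope and on its enriched variant from \cite{ecp}, is a \emph{signed} transfer map: the positive coordinates of $\ab$ are transferred via chains in $P$, and the negative coordinates via chains in $Q$.

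The first step is to combine the locally anti-blocking description $m\Cc_{P,Q} \cap \RR^d_\varepsilon = m\Cc^\pm_{P_{I_\varepsilon} \oplus Q_{\overline{I_\varepsilon}}} \cap \RR^d_\varepsilon$ with the standard description of a chain polytope by chain inequalities. Recalling that the chains of the ordinal sum $P_{I_\varepsilon} \oplus Q_{\overline{I_\varepsilon}}$ are concatenations of a chain in $P_{I_\varepsilon}$ with a chain in $Q_{\overline{I_\varepsilon}}$, this yields the explicit characterization: $\ab \in m\Cc_{P,Q} \cap \ZZ^d$ if and only if
\[
\sum_{i \in C_P} \max(a_i,0) \;+\; \sum_{j \in C_Q} \max(-a_j,0) \;\le\; m
\]
for every chain $C_P$ in $P$ and every chain $C_Q$ in $Q$.

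The second step is to define $\Phi:\ab \mapsto f$ by setting $f(i)$ to be a signed chain-sum maximum. Informally, $f(i)$ is the $P$-chain maximum $\max_C \sum_{j \in C} \max(a_j,0)$ over chains $C$ in $P$ ending at $i$, taken with positive sign, or the negative of the analogous $Q$-chain maximum, according to the sign of $a_i$ (with a tie-breaking convention when $a_i = 0$). Monotonicity of chain sums along $P$ and along $Q$ then forces $\Phi(\ab)$ to satisfy the enriched $(P,Q)$-partition inequalities, and the lattice-point characterization above, rewritten in terms of $f$, yields exactly $M(f)-m(f) \le m$: $M(f)$ is realized as a $P$-chain maximum and $-m(f)$ as a $Q$-chain maximum. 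The inverse $\Phi^{-1}$ should be the ``differencing'' map of Stanley's original construction: on the positive support of $f$, set $a_i = f(i) - \max\{f(j) : j \lessdot_P i,\ f(j) \ge 0\}$, and symmetrically on the negative support using $Q$. Checking $\Phi \circ \Phi^{-1} = \mathrm{id}$ and $\Phi^{-1} \circ \Phi = \mathrm{id}$ is then a direct induction along the Hasse diagrams of $P$ and $Q$.

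The principal obstacle is the consistent handling of the boundary cases — coordinates $i$ with $a_i = 0$ or $f(i) = 0$ — which are a priori ambiguous between the ``$P$-side'' and the ``$Q$-side''. A tie-breaking rule must be fixed so that $\Phi$ and $\Phi^{-1}$ are mutually inverse coordinate by coordinate, and so that the sign pattern read off from $f$ is the same as the sign pattern used to build it from $\ab$. I expect this book-keeping, rather than the transfer-map machinery itself, to be the delicate part of the argument.
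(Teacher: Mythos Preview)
Your proposal is correct and follows essentially the same route as the paper: both construct a bijection by applying Stanley's transfer map on the nonnegative coordinates via chains in $P$ and on the negative coordinates via chains in $Q$, with the inverse given by the corresponding differencing map. The paper resolves the tie-breaking at $0$ exactly as you anticipate, by declaring $I=\{i:f(i)\ge 0\}$ (respectively $\{i:a_i\ge 0\}$); one small point to watch when you write out $\Phi^{-1}$ is that the covers should be taken in the induced subposet $P_I$ rather than in $P$ itself, since an element $j\lessdot_{P_I} i$ need not satisfy $j\lessdot_P i$.
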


\begin{proof}
	Denote $F(m)$ the set of enriched $(P,Q)$-partitions with $M(f)- m(f) \leq m$.
	We show that there exists a bijection from  $m\Cc_{P,Q} \cap \ZZ^d$ to $F(m)$. 
	
	Take $f \in F(m)$ and set $m(f) = a$ and $M(f)=b$. 
	We set 
	\[
	I=\{i \in [d] : f(i) \geq 0 \}.
	\]
	Let
\[
	x_i=\left\{
	\begin{array}{cl}
	f(i) &  \mbox{ if } i \in I \mbox{ is minimal in } P_I,\\
	\\
	\min \{ f(i) -f(j) : i  \mbox{ covers } j \mbox{ in } P_I\} & \mbox{ if } 
	i \in I \mbox{ is not minimal in } P_I,\\
\\
	-|f(i)| &  \mbox{ if } i \in \overline{I}  \mbox{ is minimal in } Q_{\overline{I}},\\
	\\
	-\min \{ |f(i)| -|f(j)| : i  \mbox{ covers } j \mbox{ in } Q_{\overline{I}}\} & \mbox{ if } 
	i \in \overline{I} \mbox{ is not minimal in } Q_{\overline{I}}.
	\end{array}
	\right.
	\]
	Assume that $I=\{1,\ldots,k \}$ and $\overline{I}=\{k+1,\ldots,d\}$.
	Then we have $(x_1,\ldots,x_k) \in b \Cc_{P_I}$ and $(x_{k+1},\ldots,x_d) \in a \Cc_{Q_{\overline{I}}}$
	by a result of Stanley \cite[Theorem 3.2]{twoposetpolytopes}.
	Hence one obtains $(x_1,\ldots,x_d) \in b \Cc_{P_I} \oplus a \Cc_{Q_{\overline{I}}} \subset m\Cc_{P,Q}$, where $b \Cc_{P_I} \oplus a \Cc_{Q_{\overline{I}}}$ is the free sum of $b \Cc_{P_I}$  and  $a \Cc_{Q_{\overline{I}}}$.
	Similarly, in general, it follows that $(x_1,\ldots,x_d) \in m\Cc_{P,Q}$.
	Therefore, the map $\phi : F(m) \to m \Cc_{P,Q} \cap \ZZ^d$ defined by $\phi(f)=(x_1,\ldots,x_d)$ for each $f \in F(m)$ is well-defined.
	
	Take $(x_1,\ldots,x_d) \in m\Cc_{P,Q} \cap \ZZ^d$.
	We set
	\[
	I= \{ i \in [d] : x_i \geq 0 \}.
	\]
	We define a map $f : [d] \to \ZZ$ by
	\[
	f(i) =
	\left\{
	\begin{array}{cl}
	\max\{x_{j_1} + \dots +  x_{j_k}  :  j_1 <_{P_I} \dots <_{P_I} j_k =i \} & \mbox{ if } i \in I,\\
	\\
	-
	\max\{ |x_{j_1}| + \dots +  |x_{j_k}|  : j_1 <_{Q_{\overline{I}}} \dots <_{Q_{\overline{I}}} j_k =i \} & \mbox{ if } i \in \overline{I}.
	\end{array}
	\right.
	\]
	Assume that $I=\{1,\ldots,k \}$ and $\overline{I}=\{k+1,\ldots,d\}$.
	Then one has $(x_1,\ldots,x_d) \in m(\Cc_{P_I} \oplus (-\Cc_{Q_{\overline{I}}})) \cap \ZZ^d$.
	Moreover, for some integers $a$ and $b$ with $a \leq 0 \leq b$ and $b-a \leq m$, it follows that $(x_1,\ldots,x_k) \in b \Cc_{P_I}$ and $(x_{k+1},\ldots,x_d) \in a \Cc_{Q_{\overline{I}}}$.
We define $f_1: I \to
\ZZ$ by $f_1(i)=f(i)$, and $f_2: \overline{I} \to \ZZ$ by $f_2(i)=-f(i)$.
From \cite[Proof of Theorem 3.2]{twoposetpolytopes},
it follows that $0 \leq f_1(i)  \leq b$ for any $i \in I$ and $f_1(x) \leq f_1(y)$ if $x_{<_{P_I}} y$, and $0 \geq f_2(i) \geq a$ for any $i \in \overline{I}$ and $f_2(x) \leq f_2(y)$ if $x_{<_{Q_{\overline{I}}}} y$. 
Therefore, $f : [d] \to \ZZ$ is an enriched $(P,Q)$-partition with $M(f)-m(f) \leq b - a \leq m$, namely, $f \in F(m)$.
	Similarly, in general, it follows that $f \in F(m)$.
	Thus, the map $\psi:m\Cc_{P,Q} \cap \ZZ^d \to F(m)$ defined by $\psi(\xb)(i)=f(i)$ for each $\xb=(x_1,\ldots,x_d) \in m\Cc_{P,Q} \cap \ZZ^d$ is well-defined. 
	
	Finally, we show that $\phi$ is a bijection. However, this immediately follows by the above and the argument in \cite[Proof of Theorem 3.2]{twoposetpolytopes}.
\end{proof}
Since $\Cc_{P,Q}$ is reflexive, we obtain the following:
\begin{Corollary}
	Let $P$ and $Q$ be two finite naturally labeled posets on $[d]$.
	Then $\Omega^{(e)}_{P,Q}(m)$ is a polynomial in $m$ of degree $d$ and one has
	\[
	\Omega^{(e)}_{P,Q}(m)=(-1)^d\Omega^{(e)}_{P,Q}(-m-1).
	\]
\end{Corollary}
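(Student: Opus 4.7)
The plan is to reduce the statement to Ehrhart-theoretic facts about the twinned chain polytope $\Cc_{P,Q}$, using Theorem~\ref{thm:enrichedPQpart} as the bridge. Recall that this theorem identifies the counting function on the combinatorial side with the lattice-point enumerator on the geometric side:
\[
\Omega^{(e)}_{P,Q}(m) \;=\; L_{\Cc_{P,Q}}(m)\qquad(m\in\ZZ_{>0}).
\]
Once this identification is in hand, both claims become purely polytopal.

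For the polynomiality claim, I would simply invoke Ehrhart's theorem, which is recalled in Section~1: since $\Cc_{P,Q} \subset \RR^d$ is a lattice polytope of dimension $d$, the function $L_{\Cc_{P,Q}}(m)$ is a polynomial in $m$ of degree $d$ with constant term $1$. Transporting this through the equality above yields immediately that $\Omega^{(e)}_{P,Q}(m)$ is a polynomial in $m$ of degree $d$.

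For the reciprocity $\Omega^{(e)}_{P,Q}(m)=(-1)^d\Omega^{(e)}_{P,Q}(-m-1)$, I would use the reflexivity of $\Cc_{P,Q}$, which was noted in Section~5 just after its definition. By Hibi's criterion recalled in Section~1, reflexivity is equivalent to $h^*(\Cc_{P,Q},x)$ being palindromic of degree $d$, i.e.\ $h^*(\Cc_{P,Q},x) = x^{d} h^*(\Cc_{P,Q},x^{-1})$. Substituting this into the Ehrhart series $\text{Ehr}_{\Cc_{P,Q}}(x) = h^*(\Cc_{P,Q},x)/(1-x)^{d+1}$ and comparing coefficients with the formal series $\sum_{m\ge 0} L_{\Cc_{P,Q}}(-m-1)x^m$ obtained from Ehrhart--Macdonald reciprocity yields the polynomial identity
\[
L_{\Cc_{P,Q}}(m) \;=\; (-1)^{d}L_{\Cc_{P,Q}}(-m-1).
\]
Applying Theorem~\ref{thm:enrichedPQpart} once more turns this into the desired identity for $\Omega^{(e)}_{P,Q}$.

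There is essentially no obstacle here: the corollary is a direct packaging of Theorem~\ref{thm:enrichedPQpart} with two standard facts (Ehrhart's theorem and the reciprocity/palindromicity equivalence for reflexive polytopes already quoted in the paper). The only mild subtlety is making sure the equality of Theorem~\ref{thm:enrichedPQpart}, which is a priori proved for positive integers $m$, is used only to identify the two polynomials on $\ZZ_{>0}$; both sides are then polynomials, so their coincidence on infinitely many positive integers forces them to be equal as polynomials, and the reciprocity, being a polynomial identity, transfers automatically.
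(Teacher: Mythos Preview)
Your proposal is correct and matches the paper's own reasoning: the corollary is stated immediately after Theorem~\ref{thm:enrichedPQpart} with the one-line justification ``Since $\Cc_{P,Q}$ is reflexive,'' and your argument simply unpacks this---identify $\Omega^{(e)}_{P,Q}$ with $L_{\Cc_{P,Q}}$ via Theorem~\ref{thm:enrichedPQpart}, invoke Ehrhart's theorem for polynomiality, and use reflexivity (equivalently palindromicity of $h^*$) together with Ehrhart--Macdonald reciprocity for the functional equation.
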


\end{document}